\documentclass[a4paper,reqno,12pt]{amsart}
\usepackage[T1]{fontenc}
\usepackage[utf8]{inputenc}
\usepackage[english]{babel}

\usepackage{amsmath}
\usepackage{amsthm}
\usepackage{amssymb}
\usepackage{mathtools}
\usepackage{amsmath}
\usepackage{amsfonts}
\usepackage{esint}
\usepackage{yfonts}
\usepackage{quoting}
\usepackage{graphics}
\usepackage{booktabs}
\usepackage{bm}
\usepackage{bbm}
\usepackage{lmodern}
\usepackage{stmaryrd}
\usepackage{caption}
\usepackage{enumitem}			
\usepackage{tikz}
\usepackage{bookmark}
\usepackage{pgfplots}
\usepgfplotslibrary{fillbetween}
\usepackage{orcidlink}
\usepackage{mathrsfs}
\usepackage[mathscr]{euscript}

\usepackage{hyperref}
\hypersetup{
	colorlinks=true,
	linkcolor=blue,
	citecolor=blue,
	urlcolor=black,
	linktoc=all
}

\usepackage[margin=1.1in]{geometry}

\quotingsetup{font=small}

\theoremstyle{plain}
\newtheorem{proposition}{Proposition}[section]

\theoremstyle{plain}
\newtheorem{theorem}{Theorem}[section]
\numberwithin{equation}{section}	

\theoremstyle{plain}
\newtheorem{lemma}[theorem]{Lemma}

\theoremstyle{plain}
\newtheorem{corollary}{Corollary}[theorem]

\theoremstyle{definition}

\theoremstyle{definition}

\newtheorem{problemA}{Problem}

\DeclarePairedDelimiter{\abs}{\lvert}{\rvert}		
\DeclarePairedDelimiter{\norma}{\lVert}{\rVert}		

\DeclareMathOperator{\defi}{def}
\DeclareMathOperator{\tr}{tr}				
\DeclareMathOperator{\Id}{Id_\textit{n}}	
\DeclareMathOperator{\diver}{div}			

\newcommand{\numberset}{\mathbb}
\newcommand{\N}{\numberset{N}}			
\newcommand{\R}{\numberset{R}}			
\newcommand{\sfera}{\numberset{S}}		
\newcommand{\past}{p^\ast}				
\newcommand{\loc}{{\rm loc}}			
\newcommand{\stressu}{{a\!\left(\nabla u\right)}}	
\newcommand{\stressv}{{a\!\left(\nabla v\right)}}	

\def\Xint#1{\mathchoice
	{\XXint\displaystyle\textstyle{#1}}%
	{\XXint\textstyle\scriptstyle{#1}}%
	{\XXint\scriptstyle\scriptscriptstyle{#1}}%
	{\XXint\scriptscriptstyle\scriptscriptstyle{#1}}%
	\!\int}
\def\XXint#1#2#3{{\setbox0=\hbox{$#1{#2#3}{\int}$ }
		\vcenter{\hbox{$#2#3$ }}\kern-.6\wd0}}

\def\dashint{\Xint-}

\usepackage{enumitem}

\allowdisplaybreaks

\begin{document}
		
		\title[The anisotropic critical~$p$-Laplace equation]{On the anisotropic critical~$p$-Laplace equation: classification, decomposition, and stability results}
		
		\author[Carlo Alberto Antonini]{Carlo Alberto Antonini \orcidlink{0000-0002-7663-1090}}
		\address[]{Carlo Alberto Antonini. Istituto Nazionale di Alta Matematica ‘Francesco Severi’ (INdAM) and Dipartimento di Matematica e Informatica ‘Ulisse Dini’, Università degli Studi di Firenze, Viale Giovanni Battista Morgagni 67/A, 50134 Firenze, Italy}
		\email{antonini@altamatematica.it}
		
		\author[Giulio Ciraolo]{Giulio Ciraolo \orcidlink{0000-0002-9308-0147}}
		\address[]{Giulio Ciraolo. Dipartimento di Matematica ‘Federigo Enriques’, Università degli Studi di Milano, Via Cesare Saldini 50, 20133, Milan, Italy}
		\email{giulio.ciraolo@unimi.it}	
		
		\author[Michele Gatti]{Michele Gatti \orcidlink{0009-0002-6686-9684}}
		\address[]{Michele Gatti. Dipartimento di Matematica ‘Federigo Enriques’, Università degli Studi di Milano, Via Cesare Saldini 50, 20133, Milan, Italy}
		\email{michele.gatti1@unimi.it}
		
		\subjclass[2020]{Primary 35B33, 35B35, 35J62; Secondary 35J20}
		\date{\today}
		\dedicatory{}
		\keywords{Anisotropic critical $p$-Laplace equation, quantitative estimates, quasilinear elliptic equations, Struwe's decomposition, stability}
		
		\begin{abstract}
			We investigate both qualitative and quantitative issues related to the classification of non-negative energy solutions to the anisotropic critical~$p$-Laplace equation in~$\R^n$, for~$1<p<n$.
			
			Specifically, we establish an anisotropic version of Struwe's decomposition, along with the interaction estimate for the family of bubbles in this decomposition. Moreover, we provide a short proof of the classification result as well as a quantitative stability result, proving that every energy solution to a perturbation of the anisotropic critical equation must be closed to a bubble, in the absence of bubbling.
		\end{abstract}
		
		\maketitle
		
		\setcounter{tocdepth}{1}	
		\tableofcontents
	
	
	\section{Introduction}
	\label{sec:intro-stab-anis}
	
	For~$n \in \N$, and~$1<p<n$, the critical anisotropic~$p$-Laplace equation is given by
	\begin{equation}
	\label{eq:critica-anisot}
		\Delta_p^{H} u + u^{\past-1} = 0 \quad\text{in } \R^n,
	\end{equation}
	where~$H:\R^n \to [0,+\infty)$ is a norm,~$\past \coloneqq \frac{np}{n-p}$ is the critical Sobolev exponent, and
	\begin{equation*}
		\Delta_p^{H} u \coloneqq \frac{1}{p} \diver \!\left(\nabla_\xi H^p(\nabla u)\right)
	\end{equation*}
	is the anisotropic~$p$-Laplacian, also known as Finsler~$p$-Laplacian. When~$H$ is the Euclidean norm, this reduces to the classical~$p$-Laplace operator~$\Delta_p u=\diver(\abs*{\nabla u}^{p-2}\nabla u)$.
	
	The critical equation~\eqref{eq:critica-anisot} is closely related to the anisotropic Sobolev inequality
	\begin{equation*}
		S_{p} \,\norma*{u}_{L^{\past}\!(\R^n)} \leq \norma*{H(\nabla u)}_{L^{p}(\R^n)},
	\end{equation*}
	where
	\begin{equation}
	\label{eq:def-SH}
		S_{p} \coloneqq \inf_{u \in \mathcal{D}^{1,p}(\R^n) \setminus \{0\}}\frac{\norma*{H(\nabla u)}_{L^{p}(\R^n)}}{\norma*{u}_{L^{\past}\!(\R^n)}}.
	\end{equation}
	This inequality was proved, employing a mass transportation approach, by Cordero-Erausquin, Nazaret \& Villani~\cite{cor-n-vil}, who also characterized the functions which attain equality. Among these functions, we define the~$(p,H)$-\textit{bubbles} as
	\begin{equation}
	\label{eq:pH-bubb}
		U_p[z,\lambda] (x) \coloneqq \left( \frac{\lambda^\frac{1}{p-1} \, n^\frac{1}{p} \left(\frac{n-p}{p-1}\right)^{\!\!\frac{p-1}{p}}}{\lambda^\frac{p}{p-1}+H_0^\frac{p}{p-1}(x-z)} \right)^{\!\!\frac{n-p}{p}} \!,
	\end{equation}
	where~$\lambda>0$ and~$z \in \R^n$ are the scaling and translation parameters, respectively, and~$H_0$ is the dual norm of~$H$, defined by
	\begin{equation}
	\label{eq:def-H0}
		H_0(x) \coloneqq \sup_{\xi \neq 0} \frac{\left\langle x, \xi \right\rangle}{H(\xi)} \quad\text{for all } x \in \R^n.
	\end{equation}
	
	The~$(p,H)$-bubbles satisfy the critical equation~\eqref{eq:critica-anisot} and, setting~$U = U_p[z,\lambda]$, one has that~$U \in \mathcal{D}^{1,p}(\R^n) \cap L^\infty(\R^n)$ with
	\begin{equation}
	\label{eq:energ-bubb-anisot}
		\norma{U}_{L^{\past}\!(\R^n)}^{\past} = \norma{H(\nabla U)}_{L^{p}(\R^n)}^p = S^n_p.
	\end{equation}
	We recall here that the homogeneous Sobolev space is defined as 
	\begin{equation*}
		\mathcal{D}^{1,p}(\R^n) \coloneqq \{ u \in L^{\past}\!(\R^n) \,\lvert\, \nabla u \in L^p(\R^n) \}.
	\end{equation*}
	
	The Euclidean case has been the object of extensive study, including classification results~\cite{cgs,catino,cl,cg-classif,dm,gnn,ou,sciu,sun-wang,vet}, qualitative properties~\cite{alves,benci-cer,merc-will,struwe}, and quantitative stability~\cite{ccg,cfm,cg-plap,dsw,fg,mg-plap,liu-zhang}, which we will discuss in detail later. Of course, the case $p=2$ has been treated in countless other works due to its special connection with the Yamabe problem -- see, for instance,~\cite{dn} and the references therein. 
	
	The purpose of this manuscript is to prove certain extensions of these results to the case of a general norm~$H$, and it will also offer an overview of the existing theory and simplify some arguments. In particular, we circumvent in a novel way some regularity issues that may arise for a general norm~$H$. Furthermore, some of the results presented here are new even in the Euclidean setting and for the case~$p=2$ in the anisotropic framework.
	
	
	\subsection{The Euclidean setting: a quick review.}
	
	In the Euclidean context, the classification of non-negative solutions to~\eqref{eq:critica-anisot} began with the seminal papers of Gidas, Ni \& Nirenberg~\cite{gnn}, Caffarelli, Gidas \& Spruck~\cite{cgs}, and Chen \& Li~\cite{cl} in the case~$p=2$ and when~$H$ is the Euclidean norm, i.e., for the classical Laplacian, using the method of moving planes. Remarkably, this classification holds for~$C^2$-solutions to~\eqref{eq:critica-anisot} without any energy assumption. Moreover, a beautiful result due to Schoen -- quoted in~\cite[Corollary~1.6]{li-zhang} -- implies that any positive solution~$u \in C^2(\R^n)$ to~\eqref{eq:critica-anisot} with~$p=2$ enjoys the energy-mass estimate
	\begin{equation}
	\label{eq:sch}
		\int_{B_R(0)} \left(\abs*{\nabla u}^2 + u^{2^\ast}\right) dx \leq C_n,
	\end{equation}
	for some dimensional constant~$C_n>0$. More precisely, estimate~\eqref{eq:sch} was first established by Schoen using the classification result in~\cite{cgs}. However, Li \& Zhang~\cite{li-zhang} later provided an alternative proof that does not rely on such a classification. As a consequence, in the case~$p=2$ and for positive functions, being a~$C^2$-smooth solution to~\eqref{eq:critica-anisot} for the Laplacian is equivalent to being a~$\mathcal{D}^{1,2}$-solution to the same equation.
	
	Over the past two decades, the classification of solutions to~\eqref{eq:critica-anisot} has been extended to the full range~$1<p<n$, thus completely settling the problem for solutions in the energy space~$\mathcal{D}^{1,p}(\R^n)$. In particular, by combining the method of moving planes with suitable decay estimates, Damascelli, Merch\'an, Montoro \& Sciunzi~\cite{dm}, V\'etois~\cite{vet}, and Sciunzi~\cite{sciu} proved that  any positive energy solution to~\eqref{eq:critica-anisot} must be of the form~\eqref{eq:pH-bubb}.
	
	More recently, Catino, Monticelli \& Roncoroni~\cite{catino} addressed the classification of positive local weak solutions to~\eqref{eq:critica-anisot}, i.e., positive functions~$u \in W^{1,p}_{\loc}(\R^n) \cap L^\infty_{\loc}(\R^n)$ that satisfy
	\begin{equation*}
		\int_{\R^n} \left\langle \abs*{\nabla u}^{p-2} \,\nabla u, \nabla \psi \right\rangle dx = 	\int_{\R^n} u^{\past-1} \psi \, dx \quad\text{for every } \psi \in C^\infty_c(\R^n).
	\end{equation*}
	Their work was improved by Ou~\cite{ou}, and later refined by V\'etois~\cite{vet-plap} and Sun \& Wang~\cite{sun-wang}. These contributions yield a classification in the regime~$p_n<p<n$ for a threshold~$p_n \sim n/3$ depending on the dimension~$n \geq 3$.
	
	Additionally, Catino, Monticelli \& Roncoroni~\cite{catino} established the classification of local weak solutions in the full range~$1<p<n$, provided that the solution satisfies a suitable growth at infinity, or is bounded and either~$n \leq 6$ or~$n \geq 7$ with~$p> n/3$. For~$n \geq 7$, however, this latter range is essentially covered by the results in~\cite{ou,sun-wang,vet-plap}, and thus the contribution of~\cite{catino} provides new information only for~$p \in (n/3,p_n]$.
	
	Most recently, in order to overcome this limitation, Ciraolo \& Gatti~\cite{cg-classif} proved that any positive local weak solution to~\eqref{eq:critica-anisot}, whose infimum over a ball behaves properly, actually enjoys global higher integrability, namely~$u \in L^{\past-1}(\R^n)$. This yields the classification of bounded solutions in the full range~$1<p<n$, as well as of moderately growing solutions and solutions with additional regularity under this constraint. \newline
	
	Once the classification has been established, a natural problem is the investigation of stability. Informally, this leads to the following question:
	\begin{center}
		is~$u$ \textit{close} to a multiple of a~$p$-bubble, provided that~$u$ \textit{almost} solves~\eqref{eq:critica-anisot}?
	\end{center}
	
	Of course, this informal question needs to be given a rigorous meaning. From a PDE perspective, tackling such quantitative aspects naturally leads to the study of the perturbed equation
	\begin{equation}
		\label{eq:maineq-bubb-eu}
		\Delta_p u + \kappa(x) u^{\past-1} =0 \quad \text{in } \R^n,
	\end{equation}
	for~$\kappa$ close to a constant.
	
	Nevertheless, it is now well known that the answer to the previous question is negative in the following two senses.
	
	First, even restricting ourselves to the classical Laplacian, Ding \& Ni~\cite{dn} showed that the perturbed equation~\eqref{eq:maineq-bubb-eu} admits positive, radial solutions when~$\kappa = 1 +\eta$ and~$\eta \in C^\infty_{c}(\R^n)$ is a small, non-negative cut-off function. Notably, these solutions do not belong to the energy space~$\mathcal{D}^{1,2}(\R^n)$ and therefore cannot be close to an Euclidean~$2$-bubble in any reasonable sense. This phenomenon also motivated the study of stability for non-energy solutions to a class of semilinear equations by Ciraolo, Cozzi \& Gatti~\cite{ccg}, where quasi-symmetry for more general semilinear equations was also addressed. Part of these results was later extended by Gatti~\cite{mg-plap} to energy solutions of~\eqref{eq:critica-anisot} for~$p>2$.
	
	In the pursuit of a positive answer, it thus seems necessary to restrict our attention to solutions in the energy space~$\mathcal{D}^{1,p}(\R^n)$. However, even under this assumption, the situation remains subtle. As shown in a seminal work by Struwe~\cite{struwe} and its subsequent generalizations~\cite{alves,benci-cer,merc-will} the answer is negative. Indeed, one cannot, in general, assert that the quasi-solution~$u$ must be close to a single Euclidean~$p$-bubble. Instead,~$u$ may only be close to a \textit{sum} of bubbles. This phenomenon is called~\textit{bubbling}. The number of bubbles can be controlled by an a priori energy bound on~$u$. Moreover,~$u$ must be close to a family of \textit{weakly interacting} bubbles in the decomposition, meaning that the parameters of the bubbles, i.e.,  their centers and scaling factors, satisfy certain asymptotic behavior -- see Section~\ref{sec:interaction} for details. To the best of our knowledge, these interaction estimates are well understand only for~$p=2$ thanks to a result of Bahri \& Coron~\cite{bahri-coron}. \newline
	
	The stability problem for~\eqref{eq:maineq-bubb-eu} in a quantitative form has been completely settled in the Euclidean case for~$p=2$, even in the broader setting of critical points of the Sobolev inequality, thanks to the works of Ciraolo, Figalli \& Maggi~\cite{cfm}, Figalli \& Glaudo~\cite{fg}, and Deng, Sun \& Wei~\cite{dsw}.
	
	In particular, Ciraolo, Figalli \& Maggi~\cite{cfm} studied solutions~$u \in \mathcal{D}^{1,2}(\R^n)$ to~\eqref{eq:maineq-bubb-eu} under the a priori energy assumption
	\begin{equation}
		\label{eq:ener-est-euclid}
		\frac{1}{2} S^n \leq \int_{\R^n} \,\abs{\nabla u}^2 \, dx \leq \frac{3}{2} S^n,
	\end{equation}
	which ensures that the energy of~$u$ is close to that of a single Euclidean~$2$-bubble. Here~$S$ denotes the best Sobolev constant for the Euclidean norm, i.e.,~\eqref{eq:def-SH} for~$H=\abs*{\cdot}$. They also defined the deficit
	\begin{equation}
		\label{eq:def-cfm-anis}
		\defi(u,\kappa) \coloneqq \norma*{\left(\kappa-\kappa_0\right)u^{2^\ast-1}}_{L^{(2^\ast)'}\!(\R^n)},
	\end{equation}
	with the reference constant
	\begin{equation*}
		\kappa_0=\kappa_0(u) \coloneqq \frac{\int_{\R^n} \kappa(x) u^{2^\ast} dx}{\int_{\R^n} u^{\past} dx} = \frac{\int_{\R^n} \,\abs{\nabla u}^2 \, dx}{\int_{\R^n} u^{2^\ast} dx}.
	\end{equation*}
	The deficit~\eqref{eq:def-cfm-anis} measures the deviation of~$\kappa$ from the reference constant~$\kappa_0$ and clearly vanishes when~$u$ is a multiple of an Euclidean~$2$-bubble.
	
	Under the above assumptions and when~$\kappa_0(u)=1$, the main result in~\cite{cfm} states that there exists a dimensional constant~$c_n>0$ such that
	\begin{equation}
		\label{eq:clos-euclid}
		\norma*{u-U_2[z,\lambda]}_{\mathcal{D}^{1,2}(\R^n)} \leq c_n \defi(u,\kappa),
	\end{equation}
	for an Euclidean~$2$-bubble~$U_2[z,\lambda]$.
	
	This quantitative stability result was later extended in~\cite{dsw,fg} allowing for bubbling. It is worth noticing that, in these works, the interaction in Struwe's decomposition plays a crucial role in obtaining quantitative estimates.
	
	All the results in~\cite{cfm,dsw,fg} strongly rely on the Hilbert structure of~$\mathcal{D}^{1,2}(\R^n)$ and on the explicit description of the family of solutions to~\eqref{eq:critica-anisot} given by~\eqref{eq:pH-bubb}. Consequently, the method developed in these works do not seem suitable for the general case.
	
	For a general~$1<p<n$, to the best of our knowledge, quantitative stability has been recently investigated by Ciraolo \& Gatti~\cite{cg-plap} for solutions to~\eqref{eq:maineq-bubb-eu} and by~Liu \& Zhang~\cite{liu-zhang} in the critical point framework, independently and concurrently. Assuming the counterpart of~\eqref{eq:ener-est-euclid}, both the main results in~\cite{cg-plap,liu-zhang} provide an estimate of the same form as~\eqref{eq:clos-euclid} in the energy space~$\mathcal{D}^{1,p}(\R^n)$.
	
	
	\subsection{The anisotropic setting.}
	
	We now present our main assumption on the norm~$H$ and introduce some notations that will be used throughout the manuscript.
	
	For our purposes, we require that the anisotropy~$H$ is of class~$C^{3,\beta}(\R^n \setminus \{0\})$ for some~$\beta\in (0,1)$, and its anisotropic unit ball
	\begin{equation}
	\label{eq:unif-convex}
		B^H_1 \coloneqq \left\{\xi \in \R^n \mid H(\xi)<1\right\} \quad\text{is uniformly convex}.
	\end{equation}
	This means that all the principal curvatures of its boundary are bounded away from zero -- see, for instance,~\cite{cozzi-grad,cozzi-monot}. Additionally, we assume the ellipticity condition
	\begin{equation}
	\label{eq:ellip-norm}
		\lambda_H  \,\abs*{\eta}^2 \leq \frac{1}{2} \left\langle \nabla^2_\xi H^2\!\left(\xi\right) \eta , \eta \right\rangle \leq \Lambda_H \,\abs*{\eta}^2 \quad\text{for all } \xi \in \R^n \setminus \{0\} \text{ and } \eta \in \R^n,
	\end{equation}
    for some constants~$0<\lambda_H\leq \Lambda_H$.
	Note that the left-most inequality in~\eqref{eq:ellip-norm} in equivalent to the geometric assumption~\eqref{eq:unif-convex} -- see, for instance,~\cite{cozzi-monot}.

    Moreover, by Lemma~3.2 in~\cite{carlos+big4}, we have
    \begin{equation}
    \label{eq:nabla2Hp:twosides}
    c_{p,H}\,H^{p-2}(\xi)\Id\leq \nabla^2_\xi H^p(\xi)\leq C_{p,H}\,H^{p-2}(\xi)\Id\quad\text{for all } \xi\in \R^n\setminus\{0\}
    \end{equation}
    for some constants~$0<c_{p,H}\leq C_{p,H}$, depending on~$p$,~$\lambda_H$,~and $\Lambda_H$, and where~$\Id$ is the identity matrix.
    
	We denote by~$B^H_r(\xi_0)$ the anisotropic ball of radius~$r>0$ centered at~$\xi_0$, that is
	\begin{equation*}
		B^H_r(\xi_0) \coloneqq \left\{\xi \in \R^n \mid H(\xi-\xi_0)<r\right\} 
	\end{equation*}
	and abbreviate~$B^H_r \coloneqq B^H_r(0)$. We also set
	\begin{equation}
	\label{eq:defV-anisot}
		V(\xi) \coloneqq \frac{H^p(\xi)}{p}
	\end{equation}
	and define the so-called stress field
	\begin{equation}
	\label{eq:def-a}
		a(\xi) \coloneqq \nabla V(\xi).
	\end{equation}
	Thanks to the regularity of~$H$, it follows that~$V \in C^1(\R^n) \cap C^{3}(\R^n \setminus \{0\})$, and we have that
	\begin{equation}
	\label{eq:def-a-2}
		a(\xi) \coloneqq
		\begin{cases}
			\begin{aligned}
				& H^{p-1}(\xi) \nabla H(\xi)	&& \text{if } \xi \neq 0, \\
				& 0								&& \text{if } \xi = 0.
			\end{aligned}
		\end{cases}
	\end{equation}
	
	Since~$H$ is a norm on~$\R^n$, it is equivalent to the Euclidean one. Thus, there exists a couple of constants~$c_H,C_H>0$ such that
	\begin{equation}
	\label{eq:equiv-norma}
		c_{H} \,\abs*{\xi} \leq H(\xi) \leq C_{H} \,\abs*{\xi} \quad\text{for every } \xi \in \R^n.
	\end{equation}
	We also observe that, by the homogeneity of~$H^2$,~\eqref{eq:ellip-norm} entails that
	\begin{equation}
	\label{eq:H2}
		\lambda_H  \,\abs*{\xi}^2 \leq H^2(\xi) \leq \Lambda_H \,\abs*{\xi}^2 \quad\text{for all } \xi \in \R^n,
	\end{equation}
	so that~\eqref{eq:equiv-norma} holds, for instance, with~$c_H = \sqrt{\lambda_H}$ and~$C_H = \sqrt{\Lambda_H}$.
	
	Moreover, from the definition of the dual norm in~\eqref{eq:def-H0} and~\eqref{eq:H2}, we also deduce that
	\begin{equation}
	\label{eq:equiv-H0}
		\frac{1}{\Lambda_H} \,\abs*{\xi}^2 \leq H_0^2(\xi) \leq \frac{1}{\lambda_H} \,\abs*{\xi}^2 \quad\text{for all } \xi \in \R^n.
	\end{equation}
	Thanks to Lemma 3.1 in~\cite{cianchi-sal}, we know that
	\begin{equation}\label{H0:nablH}
		H_0(\nabla H(\xi)) = 1 \quad\text{for all } \xi \in \R^n \setminus \{0\},
	\end{equation}
	which, combined with~\eqref{eq:equiv-H0}, gives that
	\begin{equation*}
		\sqrt{\lambda_H} \leq \abs*{\nabla H (\xi)} \leq \sqrt{\Lambda_H} \quad\text{for all } \xi \in \R^n \setminus \{0\}.
	\end{equation*}
	
	
	\subsection{Main results.}
	
	In the anisotropic setting, Ciraolo, Figalli \& Roncoroni~\cite{cfr} established the classification of positive solutions to~\eqref{eq:critica-anisot} in convex cones and for uniformly convex norms -- see condition~\eqref{eq:ellip-norm}. In particular, in the case of the entire space, they proved the following result.
	
	\begin{theorem}
		\label{th:calssif-anisot}
		Let~$n \in \N$ and~$H:\R^n \to [0,+\infty)$ be a norm of class~$C^{3,\beta}(\R^n \setminus \{0\})$ for some~$\beta\in (0,1)$, satisfying the ellipticity condition~\eqref{eq:ellip-norm}.
		
		Moreover, let~$1<p<n$ and~$u \in \mathcal{D}^{1,p}(\R^n)$ be a positive weak solution to~\eqref{eq:critica-anisot}. Then, there exist~$\lambda>0$ and~$z \in \R^n$ such that~$u = U_p[z,\lambda]$, i.e.,~$u$ is of the form~\eqref{eq:pH-bubb}.
	\end{theorem}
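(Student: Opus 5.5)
The plan is to follow the integral-identity approach of Serrin--Zou and of Ciraolo, Figalli \& Roncoroni: introduce the auxiliary function
\[
v \coloneqq u^{-\frac{p}{n-p}},
\]
so that bubbles correspond exactly to $v(x)=a+b\,H_0^{\frac{p}{p-1}}(x-z)$. A direct computation shows that \eqref{eq:critica-anisot} becomes
\[
\Delta_p^H v = \frac{n(p-1)}{p}\,\frac{H^p(\nabla v)}{v} + c_{n,p}\, v^{-1},
\]
with $c_{n,p}=\big(\tfrac{p}{n-p}\big)^{p-1}$. Set $W\coloneqq \nabla_\xi a(\nabla v)\nabla^2 v$, or more conveniently $W=\nabla(a(\nabla v))$. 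The aim is a differential identity for $v^{1-n}$ times the traceless part of $W$, namely
\[
\diver\!\big(v^{1-n}(\ldots)\big)= v^{1-n}\Big(\tr(W^2)-\tfrac{1}{n}(\tr W)^2\Big)\geq 0 .
\]
For this, $W$ must be diagonalizable with real eigenvalues, which holds because $W$ is the product of the symmetric positive-definite matrix $\nabla^2 V(\nabla v)$ and the symmetric matrix $\nabla^2 v$; by \eqref{eq:nabla2Hp:twosides}, this makes the Newton inequality $\tr(W^2)\ge \frac1n(\tr W)^2$ available.

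The key steps, in order, are as follows.

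\textbf{Step 1: asymptotics and regularity.} Show that $u$ is bounded and satisfies $u\sim |x|^{-\frac{n-p}{p-1}}$ and $|\nabla u|\lesssim |x|^{-\frac{n-1}{p-1}}$ at infinity. This uses Moser iteration and the anisotropic Harnack/Serrin-type estimates of V\'etois adapted to $H$ via \eqref{eq:ellip-norm}. In terms of $v$ it means $v\sim |x|^{\frac{p}{p-1}}$ and $H(\nabla v)\lesssim |x|^{\frac{1}{p-1}}$.

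\textbf{Step 2: integrated Newton inequality.} Multiply the identity above by a cutoff $\varphi_R$ and integrate, using the equation for $v$ to express the cross terms. The divergence terms produce boundary contributions controlled by the decay of Step~1, and these vanish as $R\to\infty$ because of the weight $v^{1-n}$.

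\textbf{Step 3: conclusion.} Conclude that $W=\frac{\tr W}{n}\Id$ everywhere. The equation then forces $\tr W$ to be constant, so $a(\nabla v)=\mu(x-z)$ for some $\mu>0$ and $z\in\R^n$. Inverting the map $a$ via the duality $\nabla H_0(\nabla H(\xi))=\xi/H(\xi)$, related to \eqref{H0:nablH}, gives $\nabla v=\mu^{\frac1{p-1}}\nabla_x\big(\tfrac{p-1}{p}H_0^{\frac{p}{p-1}}(x-z)\big)$. Hence $v=a+bH_0^{\frac p{p-1}}(x-z)$, and the equation fixes the relation between $a$ and $b$, which yields \eqref{eq:pH-bubb}.

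The main obstacle is regularity: $v$ is only $C^{1,\alpha}$, and $a(\nabla v)$ belongs to $W^{1,2}_{\loc}$ only away from the critical set $\{\nabla v=0\}$, so the pointwise identity cannot be used directly. My first attempt would be to use the result that $a(\nabla u)\in W^{1,2}_{\loc}$ (Cozzi--Farina--Valdinoci / Antonini et al.\ type estimates for the stress field), carry out the integration by parts in weak form with the stress field as the unknown, and check that the critical set contributes nothing. If that fails, I would regularize $H^p$ by $(\varepsilon^2+H^2)^{p/2}$, prove the inequality for the approximating solutions with error terms, and pass to the limit. A secondary difficulty is making the boundary terms at infinity sharp enough when $p$ is close to $1$ or to $n$. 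There I would use the cutoff $\varphi_R$ with $|\nabla\varphi_R|\le C/R$ together with the exact exponents from Step~1, checking that the resulting term is of order $R^{-1}\cdot R^{n}\cdot R^{-(n)}$, so that it vanishes.
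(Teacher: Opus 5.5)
Your route is the paper's. Using $W\,a(\nabla v)=A\nabla[V(\nabla v)]$ and $V(\nabla v)=\frac{vP-(p/(n-p))^{p-1}}{n(p-1)}$, your flux is
\[
v^{1-n}\mathring{W}a(\nabla v)=\tfrac{1}{n(p-1)}\,v^{2-n}A\nabla P .
\]
So your identity is exactly \eqref{eq:fund-ineq-anisot} with $R=0$ and $t=0$. Your regularity plan (weak form with $a(\nabla v)\in W^{1,2}_{\loc}$ and $\abs{\mathcal Z_v}=0$) is also the one the paper follows.

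\textbf{The gap is in Step 2.} The flux $v^{1-n}\mathring W a(\nabla v)$ contains $\mathring W$, i.e.\ second derivatives of $v$, and Step~1 gives no decay for these. So the claim that the boundary contributions are ``controlled by the decay of Step~1'' is false, and your count $R^{-1}\cdot R^n\cdot R^{-n}$ simply drops this factor. What is needed is an absorption argument. Test with $\varphi_R^2$ and use Young's inequality:
\[
2\,v^{1-n}\abs{\mathring W}\,\abs{a(\nabla v)}\,\abs{\nabla\varphi_R}\,\varphi_R\le \varepsilon\, v^{1-n}\abs{\mathring W}^2\varphi_R^2+\varepsilon^{-1}v^{1-n}\abs{a(\nabla v)}^2\abs{\nabla \varphi_R}^2 .
\]
To absorb the first term into the left-hand side you need a \emph{coercive} Newton inequality,
\[
\tr(W^2)-\tfrac1n(\tr W)^2\ \ge\ c\,\abs{\mathring W}^2 \quad\text{with } c>0 \text{ independent of } x,
\]
not merely its nonnegativity. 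Since $W$ is not symmetric, $\tr(\mathring W^2)$ can be much smaller than $\abs{\mathring W}^2$. The coercive bound is where the uniform ellipticity ratio of $A=\nabla^2V(\nabla v)$ from \eqref{eq:nabla2Hp:twosides} actually enters. Write $\mathring W=AS$ with $S=\nabla^2 v-\frac{\tr W}{n}A^{-1}$ symmetric; then
\[
\tr(\mathring W^2)=\abs{A^{1/2}SA^{1/2}}^2\ge \tfrac{\lambda_{\min}(A)}{\lambda_{\max}(A)}\abs{\mathring W}^2 .
\]
This is the role of \eqref{eq:trW2-final-anisot}, with $1-c_{p,H}>0$, in the paper. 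Without it the argument does not close.

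Once this is added, the leftover term is $R^{-2}\int_{A_R}v^{1-n}\abs{\nabla v}^{2(p-1)}\,dx\lesssim R^{\frac{p-n}{p-1}}$, using $\abs{a(\nabla v)}\le C\abs{\nabla v}^{p-1}$ and the two-sided bounds of Step~1. This tends to zero for every $1<p<n$, so your ``secondary difficulty'' near $p=1$ or $p=n$ does not arise. The paper instead tests with $P\phi^2$ ($t=1$). That produces an extra nonnegative term $\int v^{2-n}\langle A\nabla P,\nabla P\rangle\phi^2$, which is discarded. The remaining tail is then bounded by $\int_{A_r}v^{-n}(1+\abs{\nabla v}^p)\,dx$, the tail of the finite-energy integral in \eqref{eq:D1p-anisot}. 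Both choices of $t$ work.
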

	
	Although this theorem has already been established through integral estimates, we provide in Section~\ref{sec:anis-classif-energ} a concise proof to highlight the strength of our approach.
	
	Theorem~\ref{th:calssif-anisot} shows that, under mild assumptions on the norm~$H$, the~$(p,H)$-bubbles are the only positive energy solutions to the critical equation~\eqref{eq:critica-anisot}. As in the Euclidean setting, once the classification is proven, it is natural to seek a quantitative counterpart. \newline
	
	Thus, as a first goal of the present work, we establish Struwe's decomposition in the anisotropic setting. In addition, we derive an interaction estimate for the family of~$(p,H)$-bubbles in the decomposition. To the best of our knowledge, this interaction estimate is new even in the Euclidean case for general~$1<p<n$, and it is expected to play a role in the quantitative analysis of bubbling.
	
	To be precise, we define on the space~$\mathcal{D}^{1,p}(\R^n)$ the functional
	\begin{equation*}
		\mathcal{J}(u) \coloneqq \int_{\R^n} \frac{1}{p} \, H^p(\nabla u) - \frac{1}{\past} \,\abs*{u}^{\past} dx,
	\end{equation*}
	and denote its Fr\'echet derivative by~$\mathcal{J}'$ -- see Section~\ref{sec:anis-struwe} for a definition.
	
	With this notation at hand, the our first result reads as follows.
	
	\begin{theorem}
	\label{th:struwe-intro}
		For~$n \in \N$ and~$1<p<n$, let~$\left\{u_m\right\}_m \subseteq \mathcal{D}^{1,p}(\R^n)$ be a sequence of non-negative functions satisfying
		\begin{equation*}
			\mathcal{J}'(u_m) \to 0 \quad\text{in } \mathcal{D}^{-1,p'}(\R^n).
		\end{equation*}
		Here,~$\mathcal{D}^{-1,p'}(\R^n)$ denotes the dual space of~$\mathcal{D}^{1,p}(\R^n)$. Moreover, suppose that
		\begin{equation*}
			\left(k-\frac{1}{2}\right) S_p^n \leq \int_{\R^n} H^p(\nabla u_m) \, dx \leq \left(k+\frac{1}{2}\right) S_p^n
		\end{equation*}
		for every~$m \in \N$ and for some~$k \in \N$ with~$k \geq 1$.
		
		Then, possibly passing to a subsequence, there exist positive solutions~$v_1,\dots,v_k \in \mathcal{D}^{1,p}(\R^n)$ to~\eqref{eq:critica-anisot}, i.e.,~$(p,H)$-bubbles of the form~\eqref{eq:pH-bubb}, and~$k$ sequences~$\left\{y^i_m\right\}_m \subseteq \R^n$ and~$\left\{\lambda^i_m\right\}_m \subseteq (0,+\infty)$ such that
		\begin{gather*}
			\notag
			\norma*{u_m - \sum_{i=1}^{k} \left(\lambda^i_m\right)^{\!\frac{p-n}{p}} v_i\left(\frac{\cdot - y^i_m}{\lambda^i_m}\right)}_{\mathcal{D}^{1,p}(\R^n)} \to 0, \\
			\notag
			\norma*{u_m}_{\mathcal{D}^{1,p}(\R^n)}^p \to \sum_{i=1}^{k} \,\norma*{v_i}_{\mathcal{D}^{1,p}(\R^n)}^p \quad\text{and}\quad \norma*{H(\nabla u_m)}_{L^{p}(\R^n)}^p \to \sum_{i=1}^{k} \,\norma*{H(\nabla v_i)}_{L^{p}(\R^n)}^p, \\
			\notag
			\mathcal{J}(u_m) \to c_\star=
			\sum_{i=1}^{k} \mathcal{J}(v_i),
		\end{gather*}
		for some $c_\star\in \R$.
		Moreover, if~$y^i_m \to y^i$ as~$m \to +\infty$, then either~$\lambda^i_m \to 0$ or~$\lambda^i_m \to +\infty$. In addition, it follows that
		\begin{equation*}
			\label{eq:interaction-intro}
			\max\left\{\frac{\lambda^i_m}{\lambda^j_m}, \frac{\lambda^j_m}{\lambda^i_m}, \frac{\abs*{y^i_m-y^j_m}^2}{\lambda_m^i \lambda_m^j} \right\} \to +\infty \quad\text{as } m \to+\infty \text{ for all } i \neq j.
		\end{equation*}
	\end{theorem}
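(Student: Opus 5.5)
We follow the classical scheme of Struwe, as adapted to the quasilinear setting by Mercuri \& Willem and Alves, with the anisotropic stress field~$a$ from~\eqref{eq:def-a} replacing~$\abs{\xi}^{p-2}\xi$.

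First, we show that~$\{u_m\}_m$ is bounded in~$\mathcal{D}^{1,p}(\R^n)$ by the energy assumption and~\eqref{eq:equiv-norma}. Hence, up to a subsequence,~$u_m \rightharpoonup u_0$ weakly,~$u_m \to u_0$ in~$L^q_{\loc}$, and a.e. The key preliminary step is to prove~$\nabla u_m \to \nabla u_0$ a.e. For this, we test~$\mathcal{J}'(u_m)$ with~$\varphi\, T_\varepsilon(u_m-u_0)$, where~$\varphi$ is a cut-off and~$T_\varepsilon$ a truncation, and use the strict monotonicity of~$a$ coming from~\eqref{eq:nabla2Hp:twosides}, namely
\[
\left\langle a(\xi)-a(\eta),\xi-\eta\right\rangle \geq c\,(H(\xi)+H(\eta))^{p-2}\abs{\xi-\eta}^2 .
\]
Together with the compactness of the lower-order term, this gives~$\nabla u_m\to\nabla u_0$ a.e. on compacts. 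It follows that~$a(\nabla u_m)\rightharpoonup a(\nabla u_0)$ in~$L^{p'}$, so~$u_0\ge 0$ is a weak solution of~\eqref{eq:critica-anisot}. Moreover, Brezis--Lieb applied to~$H^p(\nabla\cdot)$ and~$\abs{\cdot}^{\past}$ shows that~$w_m=u_m-u_0$ satisfies
\[
\mathcal{J}(w_m)=\mathcal{J}(u_m)-\mathcal{J}(u_0)+o(1), \qquad \mathcal{J}'(w_m)\to 0 .
\]
The latter requires a Brezis--Lieb type lemma for~$a(\nabla u_m)-a(\nabla u_0)-a(\nabla w_m)\to 0$ in~$L^{p'}$, which follows from a.e. convergence and the growth~$\abs{a(\xi)}\le C H^{p-1}(\xi)$. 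By Theorem~\ref{th:calssif-anisot}, if~$u_0\not\equiv 0$ it is a bubble with energy~$S_p^n$.

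Next comes the iteration. If~$w_m\to 0$ strongly we stop. Otherwise,~$\int H^p(\nabla w_m)\,dx\approx\int w_m^{\past}\,dx$ stays bounded away from zero, and in fact is at least about~$S_p^n$ by the Sobolev inequality. Using the Lévy concentration function~$Q_m(r)=\sup_y\int_{B_r(y)}\abs{w_m}^{\past}$, we pick~$\lambda_m,y_m$ with~$Q_m(\lambda_m)=\delta$ for a small~$\delta$. We then rescale,
\[
\tilde w_m=\lambda_m^{(n-p)/p}w_m(\lambda_m\cdot+y_m),
\]
which leaves the norms and the functional invariant because of the scaling of the equation. We must show~$\tilde w_m\rightharpoonup v\ne0$. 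Vanishing is excluded by an~$\varepsilon$-regularity argument: testing with~$\varphi^p\tilde w_m$ and applying the Sobolev inequality with a small~$\delta$ gives local strong convergence in~$L^{\past}$ on a ball, which contradicts the normalization. The weak limit then solves~\eqref{eq:critica-anisot}. Since~$u_m\ge0$, the negative part~$\tilde w_m^-$ tends to zero in~$L^{\past}_{\loc}$, so~$v\ge0$ and~$v$ is a bubble by Theorem~\ref{th:calssif-anisot}. Each step removes energy exactly~$S_p^n$, so the energy bounds force the process to stop after exactly~$k$ steps with a vanishing remainder. This yields the decomposition and the energy identities, with~$c_\star=kS_p^n/n$. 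If~$u_0\ne0$, it corresponds to~$\lambda=1$,~$y=0$.

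The scale dichotomy follows from the weak convergence: if~$y_m^i\to y^i$ and~$\lambda_m^i\to\lambda\in(0,\infty)$ for a profile extracted at a later step, then~$w_m\rightharpoonup0$ would contradict~$\tilde w_m\rightharpoonup v\neq0$. The interaction estimate is proved by induction along the extraction. Suppose that for some~$i<j$ the three quantities in the maximum stay bounded. Then~$\lambda^j_m/\lambda^i_m\to\mu$ and~$(y^j_m-y^i_m)/\lambda^i_m\to\zeta$. Rescaling by the~$i$-th parameters, the~$j$-th remainder converges weakly to zero, since the~$i$-th profile was already subtracted. On the other hand, it converges weakly to a nontrivial rescaled~$v_j$, which is a contradiction. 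The quotient~$\abs{y^i_m-y^j_m}^2/(\lambda^i_m\lambda^j_m)$ is controlled by combining the bounded ratio with the bounded translation.

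The main obstacle is the a.e. convergence of gradients together with the Brezis--Lieb splitting of~$\mathcal{J}'$ for the nonlinear, anisotropic field~$a$, particularly when~$1<p<2$, where monotonicity is degenerate. For this we would use the two-sided estimate~\eqref{eq:nabla2Hp:twosides} and Hölder's inequality with weights~$(H(\xi)+H(\eta))^{(2-p)p/2}$.
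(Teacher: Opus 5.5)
Your proposal is correct. The construction of the decomposition (boundedness, a.e.\ convergence of gradients via truncated test functions and monotonicity of $a$, Brezis--Lieb splitting of $\mathcal{J}$ and $\mathcal{J}'$, the L\'evy concentration function with $\varepsilon$-regularity against vanishing, energy counting through Theorem~\ref{th:calssif-anisot}) follows the same Mercuri--Willem scheme the paper uses for Theorem~\ref{th:struwe-def}.

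The genuinely different part is the interaction estimate. The paper argues a posteriori, \`a la Brezis--Coron:
\begin{itemize}
\item it groups indices into equivalence classes and shows that $\int_{\R^n}\abs{\nabla v^i_m}^{p-1}\abs{\nabla v^j_m}\,dx\to0$ across classes;
\item it splits $\norma{\nabla u_m}_{L^p(\R^n)}^p$ into class contributions using Lemma~\ref{lem:xi-p};
\item it identifies each class sum $v_I$ as the weak limit of $u_m$ rescaled at one index of the class, so $v_I$ is a non-negative solution (this step uses the explicit decay of bubbles);
\item it concludes with Lemma~\ref{lem:sum-bub}, i.e.\ with the classification.
\end{itemize}

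You instead work inside the extraction. Let $u^j_m$ be the remainder from which $v_j$ is extracted and $T_j\coloneqq T_{y^j_m,1/\lambda^j_m}$. If $i<j$ were equivalent, then
\[
T_i^{-1}u^j_m=T_i^{-1}u^{i+1}_m-\sum_{i<l<j}T_i^{-1}T_lv_l\rightharpoonup0 .
\]
The first term vanishes weakly by construction. The others vanish by the induction hypothesis on the pairs $(i,l)$; you should state explicitly that this is where the induction enters. On the other hand, $T_i^{-1}u^j_m=(T_i^{-1}T_j)(T_j^{-1}u^j_m)$ converges weakly to a fixed rescaling of $v_j\neq0$, because the relative dilation--translation $T_i^{-1}T_j$ converges.

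Comparing the two routes:
\begin{itemize}
\item Your argument is shorter, and for the interaction step it needs neither classification, nor sign information, nor Lemma~\ref{lem:xi-p}. It therefore gives the interaction estimate also for sign-changing Palais--Smale sequences in Theorem~\ref{th:struwe-def}. The scale dichotomy is the case $T_i=\mathrm{Id}$, i.e.\ comparison with the parameters $(0,1)$ of $v_0$.
\item The paper's argument does not depend on how the profiles were produced. It shows that any bubble decomposition of a non-negative Palais--Smale sequence satisfying the energy identity is automatically weakly interacting, at the cost of using positivity and classification.
\end{itemize}

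Three small remarks.
\begin{itemize}
\item Your claim that the negative parts of the rescaled remainders vanish is true, even in $L^{\past}(\R^n)$, but it needs an inductive dominated-convergence argument. One has $(u^{j+1}_m)^-\le(u^j_m)^-+\bigl((u^j_m)^+-T_jv_j\bigr)^-$, and after applying $T_j^{-1}$ the last term is bounded by $v_j\ge0$ and tends to zero a.e. The paper avoids this by passing to $\mathcal{J}_+$.
\item Before invoking Theorem~\ref{th:calssif-anisot}, you need the strong maximum principle to upgrade $v\ge0$, $v\not\equiv0$, to $v>0$.
\item Your restriction of the dichotomy to profiles extracted after $u_0$ is necessary. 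For $u_m\equiv U_p[0,1]$ the dichotomy fails for the only bubble. So the statement as written, in which $v_0$ is silently absorbed among the $k$ bubbles (in the paper's proof as well), holds only for the other indices.
\end{itemize}
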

	
	We emphasize that Theorem~\ref{th:struwe-intro} recovers the known interaction result in the Euclidean case for~$p=2$, as we shall discuss in Section~\ref{sec:interaction}. \newline
	
	Concerning quantitative aspect, as mentioned above, in the Euclidean setting only the case of absence of bubbling has been recently treated in~\cite{cg-plap,liu-zhang}.
	
	Although the estimate in~\cite{liu-zhang} is optimal, we adopt here the method developed in~\cite{cg-plap}, which seems more flexible and can be extended to the anisotropic case, as we illustrate below.
	
	In the present manuscript, building on the ideas in~\cite{cg-plap} and establishing the appropriate integral estimates, we provide a quantitative closeness result to a~$(p,H)$-bubble for solutions to~\eqref{eq:maineq-bubb-anisot} in the absence of bubbling. To the best of our knowledge, this quantitative analysis in new even in the case~$p=2$ for general norms~$H$.
	
	In the anisotropic setting, to obtain quantitative stability results, one has to consider solutions to
	\begin{equation}
		\label{eq:maineq-bubb-anisot}
		\Delta_p^H u + \kappa(x) u^{\past-1} =0 \quad \text{in } \R^n,
	\end{equation}
	with the a priori energy assumption 	
	\begin{equation}
	\label{eq:ipotesi-energ-anisot}
		\frac{1}{2} S^n_p \leq \int_{\R^n} H^{p}\!\left(\nabla u\right) dx \leq \frac{3}{2} S^n_p.
	\end{equation}
	where~$S_p$ is given by~\eqref{eq:def-SH}.
	In particular, when~$\kappa \equiv \kappa_0$ is a constant, any solution~$u \in \mathcal{D}^{1,p}(\R^n)$ to~\eqref{eq:maineq-bubb-anisot} must be a multiple of a~$(p,H)$-bubble and, by testing the equation with~$u$, one finds
	\begin{equation}
		\label{eq:defk0-anisot}
		\kappa_0=\kappa_0(u) \coloneqq \frac{\int_{\R^n} \kappa(x) u^{\past} dx}{\int_{\R^n} u^{\past} dx} = \frac{\int_{\R^n} H^{p}\!\left(\nabla u\right) dx}{\int_{\R^n} u^{\past} dx}.
	\end{equation}
	It is thus natural to measure the proximity of~$\kappa$ to~$\kappa_0$ using the deficit
	\begin{equation}
		\label{eq:def-cfm-anisot}
		\defi(u,\kappa) \coloneqq \norma*{\left(\kappa-\kappa_0\right)u^{\past-1}}_{L^{(\past)'}\!(\R^n)}.
	\end{equation}
    Here, as usual,~$q'=\frac{q}{q-1}$ denotes the H\"older conjugate of~$q>1$.

	With this notation in place, we can state our main stability result.
	
	\begin{theorem}
	\label{th:main-th-anisot-stab}
		Let~$n \in \N$ and~$H:\R^n \to [0,+\infty)$ be a  norm of class~$C^{3,\beta}(\R^n \setminus \{0\})$ for some~$\beta\in (0,1)$ satisfying the ellipticity condition~\eqref{eq:ellip-norm}.
		
		Let~$1<p<n$, and let~$\kappa \in L^\infty(\R^n) \cap C^{1,1}_{\loc}(\R^n)$ be a positive function. Suppose~$u \in \mathcal{D}^{1,p}(\R^n)$ is a positive weak solution to~\eqref{eq:maineq-bubb-anisot} satisfying the energy bound~\eqref{eq:ipotesi-energ-anisot}. Finally, assume that
		\begin{equation*}
			\kappa_0(u)=1,
		\end{equation*}
		where~$\kappa_0(u)$ is defined in~\eqref{eq:defk0-anisot}.
		
		Then, there exist constants~$C \geq 1$ and~$\vartheta \in (0,1)$, and a~$(p,H)$-bubble~$U_p[z,\lambda]$, of the form~\eqref{eq:pH-bubb}, such that
		\begin{equation*}
			\label{eq:close-toapHbub}
			\norma*{u-U_p[z,\lambda]}_{\mathcal{D}^{1,p}(\R^n)} \leq C \defi(u,\kappa)^{\vartheta}.
		\end{equation*}
		The constants~$C$ and~$\vartheta$ depend only on~$n$,~$p$,~$H$, and~$\norma*{\kappa}_{L^\infty(\R^n)}$.
	\end{theorem}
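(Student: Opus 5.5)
We follow the strategy of~\cite{cg-plap}, adapting it to the anisotropic setting: a qualitative compactness step, followed by a quantitative integral identity that measures how far~$u$ is from satisfying the rigidity identity behind Theorem~\ref{th:calssif-anisot}.

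\emph{Step 1: reduction to small deficit.} Since~$\norma*{u-U_p[z,\lambda]}_{\mathcal{D}^{1,p}(\R^n)}$ is bounded by a constant depending only on~$n,p,H$ (by~\eqref{eq:ipotesi-energ-anisot} and~\eqref{eq:energ-bubb-anisot}), it suffices to treat~$\defi(u,\kappa)\le\delta_0$ for some small~$\delta_0$. For such~$u$ we first argue qualitatively, using Theorem~\ref{th:struwe-intro} with~$k=1$. We take a sequence~$u_m$ with~$\defi(u_m,\kappa_m)\to0$ and~$\kappa_0(u_m)=1$. Testing the equation gives
\begin{equation*}
\abs*{\langle\mathcal{J}'(u_m),\varphi\rangle}=\Big|\int(\kappa_m-1)u_m^{\past-1}\varphi\,dx\Big|\le \defi\,\norma*{\varphi}_{L^{\past}},
\end{equation*}
so~$\mathcal{J}'(u_m)\to0$ in~$\mathcal{D}^{-1,p'}$. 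The energy bound~\eqref{eq:ipotesi-energ-anisot} forces a single bubble, so after rescaling and translating (both invariances preserve the deficit) we get~$u_m\to U_p[0,1]$ in~$\mathcal{D}^{1,p}$. This yields closeness with a non-explicit modulus, together with uniform a priori estimates: uniform~$L^\infty$ bounds via Moser iteration, which is allowed since~$\kappa\in L^\infty$, and, by comparison, the sharp two-sided decay~$u\approx(1+\abs*{x})^{-\frac{n-p}{p-1}}$ and~$\abs*{\nabla u}\lesssim(1+\abs*{x})^{-\frac{n-1}{p-1}}$, with constants depending on~$n,p,H,\norma*{\kappa}_{L^\infty}$.

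\emph{Step 2: quantitative integral identity.} We set~$v=u^{-\frac{p}{n-p}}$, so that the bubbles correspond exactly to~$v=a+bH_0^{p'}(x-z)$. Then we consider the vector field~$W=\nabla_\xi V(\nabla v)$ and the trace-free part of the anisotropic Hessian matrix~$\nabla W$. The Serrin/Obata-type identity used in~\cite{cfr} for~\eqref{eq:critica-anisot} becomes, for~\eqref{eq:maineq-bubb-anisot}, the identity
\begin{equation*}
\int v^{1-n}\,\Big|\nabla W-\tfrac{\tr\nabla W}{n}\Id\Big|^2\,(\text{weight})\,dx=\int(\text{terms involving }\nabla\kappa\text{ or }\kappa-1),
\end{equation*}
where the left-hand side is controlled from below through~\eqref{eq:nabla2Hp:twosides}. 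We then integrate by parts to move the derivative off~$\kappa$, which needs~$\kappa\in C^{1,1}_{\loc}$ only for justification. This bounds the right-hand side by a power of~$\defi(u,\kappa)$ via H\"older's inequality and the decay of Step~1. The main obstacle is justifying this computation: for a general norm~$H$ one only has~$a(\nabla u)\in W^{1,2}_{\loc}$ and possible degeneracy at critical points of~$u$. We plan to regularize~$H$ and~$\kappa$, or to integrate against cutoffs, and to pass to the limit using the regularity results of~\cite{cozzi-grad}. The boundary terms at infinity vanish thanks to the sharp decay.

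\emph{Step 3: from the Hessian estimate to a bubble.} The bound~$\norma*{\nabla W-\frac{\tr \nabla W}{n}\Id}\lesssim\defi^{\vartheta_1}$ in a weighted~$L^2$ sense on large balls~$B_R$ gives, by a Poincar\'e argument, that~$W$ is close to~$\mu(x-z)$, so~$v$ is close to~$a+bH_0^{p'}(x-z)$ on~$B_R$. Here the constants~$a,b,z$ are chosen via averages. This makes~$u$ close to some~$U_p[z,\lambda]$ in~$L^{\past}$ and~$\mathcal{D}^{1,p}(B_R)$, with the parameters kept bounded by Step~1. The tails outside~$B_R$ are controlled by the uniform decay, giving a contribution~$\lesssim R^{-\gamma}$. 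Choosing~$R=\defi^{-\epsilon}$ and passing from the control of~$v$ to~$\nabla u$ by interpolation gives the estimate with some~$\vartheta\in(0,1)$.
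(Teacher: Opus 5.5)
Your plan is essentially the paper's proof, which transplants the scheme of \cite{cg-plap}: reduction to small deficit, Struwe compactness with $k=1$ plus uniform two-sided decay, an integral inequality controlling the traceless part $\mathring{W}$ of $\nabla a(\nabla v)$ for $v=u^{-\frac{p}{n-p}}$, Poincar\'e estimates on $B_r$, and tail control with $r$ a negative power of $\defi(u,\kappa)$; the one difference is that the paper justifies the integration by parts directly, from $C^{3,\beta}$ regularity off $\mathcal{Z}_v$ and $a(\nabla v)\in W^{1,2}_{\loc}$, rather than by regularizing $H$. You leave two steps implicit, which the paper handles as follows: the $P$-function identity $\nabla P=\frac{n}{v}\bigl(\mathring{W}^{\mathsf T}+\frac{R}{n}\Id\bigr)\nabla v$ makes $\tr\nabla a(\nabla v)=P+R$ nearly constant via Poincar\'e applied to $P$, and anchoring at the minimum point $x_0$ of $v$ (where $\nabla v(x_0)=0$, so $v(x_0)\overline{P}\approx(\frac{p}{n-p})^{p-1}$) makes the profile $a+bH_0^{p'}$ correspond to an exact bubble rather than a multiple of one, a point you could also settle using $\kappa_0(u)=1$.
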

	
	As observed in~\cite{cfm,cg-plap}, the general case follows from Theorem~\ref{th:main-th-anisot-stab} by scaling. Thus, we have the following corollary, whose proof is omitted.
	
	\begin{corollary}
	\label{cor:main-bubbles-anisot}
		Let~$n \in \N$ and~$H:\R^n \to [0,+\infty)$ be a  norm of class~$C^{3,\beta}(\R^n \setminus \{0\})$ for some~$\beta\in (0,1)$ satisfying the ellipticity condition~\eqref{eq:ellip-norm}.
		
		Let~$1<p<n$, and let~$\kappa \in L^\infty(\R^n) \cap C^{1,1}_{\loc}(\R^n)$ be a positive function. Suppose~$u \in \mathcal{D}^{1,p}(\R^n)$ is a positive weak solution to~\eqref{eq:maineq-bubb-anisot} satisfying the energy bound
		\begin{equation*}
			\frac{1}{2} \,\kappa_0(u)^{\frac{p}{p-\past}} S^n_p \leq \int_{\R^n} H^p(\nabla u) \, dx \leq \frac{3}{2} \,\kappa_0(u)^{\frac{p}{p-\past}} S^n_p,
		\end{equation*}
		where~$\kappa_0(u)$ is defined in~\eqref{eq:defk0-anisot}.
		
		Then, there exist constants~$C \geq 1$ and~$\vartheta \in (0,1)$, and a function~$\mathcal{U} \in \mathcal{D}^{1,p}(\R^n)$ of the form
		\begin{equation*}
			\mathcal{U} = \kappa_0(u)^{\frac{1}{p-\past}} \,U_p[z,\lambda],
		\end{equation*}
		for some~$(p,H)$-bubble~$U_p[z,\lambda]$ of the form~\eqref{eq:pH-bubb}, such that
		\begin{equation*}
			\norma*{u-\mathcal{U}}_{\mathcal{D}^{1,p}(\R^n)} \leq C \defi(u,\kappa)^{\vartheta}.
		\end{equation*}
		The constant~$\vartheta$ depends only on~$n$,~$p$,~$H$, and~$\norma*{\kappa}_{L^\infty(\R^n)}$, while the constant~$C$ depends on~$\kappa_0(u)$ as well.
	\end{corollary}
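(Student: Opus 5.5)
The plan is to reduce Corollary~\ref{cor:main-bubbles-anisot} to Theorem~\ref{th:main-th-anisot-stab} by a pure amplitude rescaling. Set $\kappa_0\coloneqq\kappa_0(u)>0$; it is positive because $\kappa>0$ and $u>0$. Define
\begin{equation*}
	w \coloneqq \kappa_0^{-\frac{1}{p-\past}}\, u = \kappa_0^{\frac{1}{\past-p}}\, u .
\end{equation*}
Since $\Delta_p^H$ is $(p-1)$-homogeneous, writing $u=t w$ with $t=\kappa_0^{\frac{1}{p-\past}}$ turns the equation into
\begin{equation*}
	t^{p-1}\Delta_p^H w + \kappa\, t^{\past-1} w^{\past-1}=0 ,
\end{equation*}
that is, $\Delta_p^H w + \tilde\kappa\, w^{\past-1}=0$ with $\tilde\kappa \coloneqq t^{\past-p}\kappa=\kappa/\kappa_0$. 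Thus $w\in\mathcal{D}^{1,p}(\R^n)$ is a positive weak solution of~\eqref{eq:maineq-bubb-anisot} with coefficient $\tilde\kappa \in L^\infty(\R^n)\cap C^{1,1}_{\loc}(\R^n)$, which is positive.

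Next I check the hypotheses of Theorem~\ref{th:main-th-anisot-stab}. From the first expression in~\eqref{eq:defk0-anisot}, $\kappa_0(w)=\int\tilde\kappa w^{\past}/\int w^{\past}=\kappa_0(u)/\kappa_0=1$. For the energy, $\int H^p(\nabla w)\,dx = t^{-p}\int H^p(\nabla u)\,dx = \kappa_0^{-\frac{p}{p-\past}}\int H^p(\nabla u)\,dx$, so the assumed two-sided bound becomes exactly~\eqref{eq:ipotesi-energ-anisot}. The theorem therefore gives a bubble $U_p[z,\lambda]$ with
\begin{equation*}
	\norma*{w-U_p[z,\lambda]}_{\mathcal{D}^{1,p}} \le C\,\defi(w,\tilde\kappa)^{\vartheta}.
\end{equation*}
Here $C$ and $\vartheta$ depend on $n$, $p$, $H$ and $\norma{\tilde\kappa}_{L^\infty}=\norma{\kappa}_{L^\infty}/\kappa_0$.

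Finally I translate back. First, $u-\mathcal U = t\,(w-U_p[z,\lambda])$. Second,
\begin{equation*}
	\defi(w,\tilde\kappa)=\norma{(\kappa/\kappa_0-1)\,t^{1-\past}u^{\past-1}}_{L^{(\past)'}} = \kappa_0^{-1}\,t^{1-\past}\defi(u,\kappa).
\end{equation*}
Multiplying through, the powers of $\kappa_0$ are absorbed into a constant depending on $\kappa_0(u)$. This gives $\norma{u-\mathcal U}\le C'\defi(u,\kappa)^{\vartheta}$.

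The one delicate point is the dependence of $\vartheta$ on $\kappa_0$ through $\norma{\tilde\kappa}_{L^\infty}$. I would handle it by observing that $\kappa_0\le\norma{\kappa}_{L^\infty}$ trivially. To remove a lower bound on $\kappa_0$ from $\vartheta$, I would rely on the fact that the proof of Theorem~\ref{th:main-th-anisot-stab} uses $\norma{\kappa}_{L^\infty}$ only monotonically, with $\vartheta$ nonincreasing in it. Otherwise, I would accept the dependence of $\vartheta$ on $\kappa_0$ as well, and absorb it through the bound $\kappa_0\le\norma{\kappa}_\infty$ wherever possible.
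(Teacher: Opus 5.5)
Your reduction is the scaling argument the paper has in mind (the paper omits the proof), and your computations are correct. These are the $(p-1)$-homogeneity step, $\tilde\kappa=\kappa/\kappa_0$, $\kappa_0(w)=1$, the normalized bound \eqref{eq:ipotesi-energ-anisot}, and $\defi(w,\tilde\kappa)=\kappa_0^{\frac{p-1}{\past-p}}\defi(u,\kappa)$. Together they give the estimate with a constant depending on $\kappa_0(u)$.

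The gap is your last paragraph, i.e.\ the claim that $\vartheta$ does not depend on $\kappa_0(u)$. Applied to $w$, Theorem~\ref{th:main-th-anisot-stab} gives $\vartheta=\vartheta\bigl(n,p,H,\norma{\kappa}_{L^\infty}/\kappa_0\bigr)$. The bound $\kappa_0\le\norma{\kappa}_{L^\infty}$ only yields $\norma{\tilde\kappa}_{L^\infty}\ge 1$. If $\vartheta$ is nonincreasing in $\norma{\kappa}_{L^\infty}$, this lower bound points the wrong way. A uniform $\vartheta>0$ would need an \emph{upper} bound on $\norma{\kappa}_{L^\infty}/\kappa_0(u)$, that is, a lower bound on $\kappa_0(u)$ in terms of $\norma{\kappa}_{L^\infty}$. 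Nothing in the hypotheses provides one: the deficit is weighted by $u^{\past-1}$, so $\kappa$ may be large compared to $\kappa_0$ where $u$ is small. Your fallback therefore proves a weaker statement than the one claimed.

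What closes the gap is tracing where $\norma{\kappa}_{L^\infty}$ enters the exponent in the proof of Theorem~\ref{th:main-th-anisot-stab}. The exponent is fixed in the last step through the choices of $r$, $\mathsf t$, $\tau$. The only non-structural quantity there is the H\"older exponent $\alpha$ of $\nabla v$ used in \eqref{eq:est2-media-anisot} and \eqref{eq:to-mult}. The other $\norma{\kappa}_{L^\infty}$-dependent quantities ($\mathscr M$, $c_0$, $C_0$, $C_1$, $\mathscr R$, $\gamma$, $\delta$, the H\"older seminorm) enter only multiplicatively, hence only $C$.

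The exponent $\alpha$ can be taken to depend only on $n$, $p$, $H$. If $\diver a(\nabla w)=f$, then $\tilde w(x)\coloneqq\mu^{-1}w(\rho x)$ solves $\diver a(\nabla\tilde w)=\mu^{1-p}\rho^{p}f(\rho x)$, because $a$ is $(p-1)$-homogeneous. Choosing $\mu=\norma{w}_{L^\infty}$ and $\rho$ small normalizes both the solution and the right-hand side, so the $C^{1,\alpha}$ exponent becomes purely structural. The sizes of $\norma{w}_{L^\infty}$ and $\norma{f}_{L^\infty}$ then affect only the seminorm, through $\rho$ and a covering.

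With this, Theorem~\ref{th:main-th-anisot-stab} holds with $\vartheta$ independent of $\norma{\kappa}_{L^\infty}$. Your reduction then yields the corollary exactly as stated, with all $\kappa_0(u)$-dependence in $C$.
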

	
	As mentioned above, the proof of Theorem~\ref{th:main-th-anisot-stab} adapts to the anisotropic setting the scheme introduced in~\cite{cg-plap}. The main idea is to consider the auxiliary function~$v \coloneqq u^{-\frac{p}{n-p}}$ and the so-called~$P$-function, defined in the Euclidean setting as
	\begin{equation*}
		P \coloneqq n \,\frac{p-1}{p} v^{-1} \,\abs*{\nabla v}^p + \left(\frac{p}{n-p}\right)^{\! p-1} v^{-1}. 
	\end{equation*}
	In addition, we consider the matrix~$W \coloneqq \nabla a(\nabla v)$ and its traceless version~$\mathring{W}$. The key point is to derive and integral inequality involving~$P$ and~$\mathring{W}$ -- namely, the content of Proposition~3.3 in~\cite{cg-plap}. Building on this, one can construct a function that approximates~$v$ in an integral sense and, then, revert back to~$u$.
	
	This strategy carries over to our general framework with appropriate modifications due to the presence of the anisotropy~$H$. Once the correct integral estimates for the~$P$-function have been established -- see Proposition~\ref{prop:fund-ineq-anisot} below -- the remainder of proof follows a similar construction.
	
	The main difficulties stem from the fact that, in the present setting, we work with a general norm~$H$. Nevertheless, we provide a direct proof of the main integral inequality -- namely, the content of  Proposition~\ref{prop:fund-ineq-anisot} below --together with the~$P$-function estimate from which it follows -- that is Proposition~\ref{prop:fund-ident-anisot-tbp} below. This approach is more straightforward and significantly simplifies the regularization procedure used in~\cite{cg-plap}.
	
	Finally, the integral identity obtained in Proposition~\ref{prop:fund-ident-anisot-tbp} may serve to extend the classification of~\eqref{eq:critica-anisot} to local weak solutions, thus generalizing the works~\cite{catino,cg-classif,ou,vet-plap} to a broader context. We refrain from pursuing this direction here, preferring instead to focus on energy solutions.
	
	We conclude by emphasizing that the regularity assumptions on the norm,~$H \in C^{3,\beta}(\R^n \setminus \{0\})$, is purely technical. In fact, the main results of the present manuscript could be established under the weaker assumption that~$H \in C^{2}(\R^n \setminus \{0\})$, satisfying the ellipticity condition~\eqref{eq:ellip-norm}. This would, however, require an additional regularization procedure in Proposition~\ref{prop:fund-ident-anisot-tbp}, which we refrain from including here in order not to overburden the presentation.
	
	
	\subsection{Open problems.}
	
	At present, there are at least a couple of questions concerning~\eqref{eq:critica-anisot}, that remain open even in the Euclidean framework. We will briefly discuss them below.
	
	The first one concerns the classification of local weak solutions. As mentioned above, this result is known only in a certain dimensional range for~$p$ or under additional assumptions, such as boundedness.
	\begin{problemA}
	\label{pb:1}
		For~$1<p \leq p_n$, does there exist a non-negative solution~$u \in W^{1,p}_{\loc}(\R^n) \cap L^\infty_{\loc}(\R^n)$ to~\eqref{eq:critica-anisot} which is not a~$(p,H)$-bubble of the form~\eqref{eq:pH-bubb}, even in the Euclidean case?
	\end{problemA}
	For the precise value of~$p_n$, see, for instance, Theorem~I in~\cite{cg-classif}. By the previous discussion, any such solution -- if it exists -- must fail to satisfy certain regularity properties. Moreover, if a counterexample does exist -- possibly for a different threshold~$p_n$ -- and therefore Problem~\ref{pb:1} has a positive answer, a philosophical question arise:
	\begin{center}
		which is the meaning of this dimensional threshold?
	\end{center}
	
	The second problem concerns stability issues. For~$p=2$, the stability of~\eqref{eq:critica-anisot} in the Euclidean case has been completely settled, allowing for bubbling.
	\begin{problemA}
	\label{pb:2}
		Is it possible to obtain a quantitative stability result, in the direction of Theorem~\ref{th:main-th-anisot-stab} -- or those in~\cite{cg-plap,liu-zhang} -- that allows for bubbling, even in the Euclidean case?
	\end{problemA}
	Indeed, Theorem~\ref{th:struwe-intro} and the results in~\cite{alves,merc-will} show that perturbations of~\eqref{eq:critica-anisot} may give rise to bubbling phenomena. However, since the case~$p=2$ is fully understood, we strongly believe that Problem~\ref{pb:2} has a positive answer. At present, pursuing such a result seems to be obstructed only by technical difficulties stemming from the non-Hilbertian structure of the problem. Of course, the anisotropic case, even for~$p=2$, may present additional challenges.
	
	
	\subsection{Structure of the paper.}
	
	In Section~\ref{sec:symm-anisot}, we introduce a family of operators that act nicely on solution to~\eqref{eq:maineq-bubb-anisot} and will be useful throughout the paper. Sections~\ref{sec:anis-struwe} and~\ref{sec:interaction} are devoted to proving Struwe's decomposition and establishing the interaction estimate for the family of bubbles appearing in the decomposition and, in particular, to proving Theorem~\ref{th:struwe-intro}. In Section~\ref{sec:ineq-Pfunct}, we introduce the so-called~$P$-function and prove some fundamental integral inequalities involving it. Exploiting these estimates, we finally prove Theorem~\ref{th:calssif-anisot} in Section~\ref{sec:anis-classif-energ} and Theorem~\ref{th:main-th-anisot-stab} in Section~\ref{sec:anis-stab}.

	
	\section{Symmetries of the problem}
	\label{sec:symm-anisot}

	Given~$\lambda >0$ and~$z \in \R^n$, let~$T_{z,\lambda}: C^\infty_c(\R^n) \to C^\infty_c(\R^n)$ be the operator defined by
	\begin{equation*}
		T_{z,\lambda}(\phi)(x) \coloneqq \lambda^{\frac{n-p}{p}} \phi\left(\lambda(x-z)\right).
	\end{equation*}
	Clearly, this operator can be extended to functions which are non smooth with compact support in the same fashion.
	
	The following lemma provides some fundamental properties of the family of transformations~$T_{z,\lambda}$ which will be useful in the following. This is the counterpart of Lemma~2.1 in~\cite{cg-plap} -- see also Section~2.1 in~\cite{fg}.
	
	\begin{lemma}
		\label{lem:sym-anisot}
		The operator~$T_{z,\lambda}$ enjoys the subsequent properties.
		\begin{enumerate}[leftmargin=*,label=$(\arabic*)$]
			\item \label{it:cons-normapas-anisot} For any~$\phi \in C^\infty_c(\R^n)$ and any norm~$H$, it holds that
			\begin{equation*}
				\int_{\R^n} H^{\past}\!\!\left(T_{z,\lambda}(\phi)\right) dx = \int_{\R^n} H^{\past}\!\!\left(\phi\right) dx.
			\end{equation*}
			\item \label{it:cons-normap-anisot} For any~$\phi \in C^\infty_c(\R^n)$ and any norm~$H$, it holds that
			\begin{equation*}
				\int_{\R^n} H^{p}\!\left(\nabla T_{z,\lambda}(\phi)\right) dx = \int_{\R^n} H^{p}\!\left(\nabla \phi\right) dx.
			\end{equation*}
			\item \label{it:inver-anisot} For any~$\phi \in C^\infty_c(\R^n)$, it holds that
			\begin{equation*}
				T_{z,\lambda}\!\left(T_{-z\lambda,1/\lambda}(\phi)\right) = T_{-z\lambda,1/\lambda}\!\left(T_{z,\lambda}(\phi)\right) = \phi.
			\end{equation*}
			\item \label{it:stillbub-anisot} For any~$(p,H)$-bubble~$U$,~$T_{z,\lambda} \!\left(U\right)$ is still a~$(p,H)$-bubble.
			\item \label{it:transf-unitbub-anisot} The~$(p,H)$-bubbles obey
			\begin{equation*}
				U_p[z,\lambda] = T_{z,1/\lambda} \!\left(U_p[0,1]\right) \quad\text{and}\quad U_p[0,1] = T_{-z/\lambda,\lambda} \!\left(U_p[z,\lambda]\right).
			\end{equation*}
		\end{enumerate}
	\end{lemma}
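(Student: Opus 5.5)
All five items follow from the change of variables $y=\lambda(x-z)$, the homogeneity of $H$ and $H_0$, and the explicit formula~\eqref{eq:pH-bubb}. The plan is to check them in the order \ref{it:cons-normapas-anisot}, \ref{it:cons-normap-anisot}, \ref{it:inver-anisot}, \ref{it:transf-unitbub-anisot}, \ref{it:stillbub-anisot}, since \ref{it:stillbub-anisot} is most easily deduced from a composition formula of the type in \ref{it:transf-unitbub-anisot}.

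For \ref{it:cons-normapas-anisot}, $H$ is applied to a scalar, so $H(t)=H(1)\abs{t}$ up to the one-dimensional normalization. It therefore suffices to show $\int_{\R^n}\abs{T_{z,\lambda}\phi}^{\past}dx=\int_{\R^n}\abs{\phi}^{\past}dx$. Substituting $y=\lambda(x-z)$, so that $dx=\lambda^{-n}dy$, the prefactor $\lambda^{\frac{n-p}{p}\past}=\lambda^{n}$ cancels exactly.

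For \ref{it:cons-normap-anisot}, the chain rule gives
\[
\nabla T_{z,\lambda}\phi(x)=\lambda^{\frac{n-p}{p}+1}\nabla\phi(\lambda(x-z)).
\]
By the $1$-homogeneity of $H$,
\[
H^p(\nabla T_{z,\lambda}\phi)=\lambda^{n}H^p(\nabla\phi)(\lambda(x-z)),
\]
and the same substitution concludes.

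For \ref{it:inver-anisot}, I compute directly:
\[
T_{z,\lambda}\bigl(T_{-z\lambda,1/\lambda}\phi\bigr)(x)=\lambda^{\frac{n-p}{p}}\lambda^{-\frac{n-p}{p}}\phi\!\left(\frac{\lambda(x-z)+z\lambda}{\lambda}\right)=\phi(x),
\]
and the reverse composition is checked in the same way.

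For \ref{it:transf-unitbub-anisot}, write $c_{n,p}\coloneqq n^{1/p}\bigl(\frac{n-p}{p-1}\bigr)^{(p-1)/p}$. By homogeneity of $H_0$,
\[
H_0^{\frac{p}{p-1}}\!\left(\frac{x-z}{\lambda}\right)=\lambda^{-\frac{p}{p-1}}H_0^{\frac{p}{p-1}}(x-z).
\]
Hence
\[
T_{z,1/\lambda}(U_p[0,1])(x)=\lambda^{-\frac{n-p}{p}}\left(\frac{c_{n,p}\,\lambda^{\frac{p}{p-1}}}{\lambda^{\frac{p}{p-1}}+H_0^{\frac{p}{p-1}}(x-z)}\right)^{\!\frac{n-p}{p}}.
\]
Since $\frac{p}{p-1}-1=\frac1{p-1}$, absorbing the prefactor into the bracket gives exactly $U_p[z,\lambda]$. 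The second identity in \ref{it:transf-unitbub-anisot} then follows from \ref{it:inver-anisot} with $\lambda$ replaced by $1/\lambda$: the inverse of $T_{z,1/\lambda}$ is $T_{-z/\lambda,\lambda}$.

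For \ref{it:stillbub-anisot}, the same computation shows
\[
T_{z,\lambda}(U_p[w,\mu])=U_p\!\left[z+\frac{w}{\lambda},\frac{\mu}{\lambda}\right].
\]
Alternatively, one can write $U_p[w,\mu]=T_{w,1/\mu}(U_p[0,1])$ and observe that $T_{z,\lambda}\circ T_{w,1/\mu}=T_{z+w/\lambda,\,\lambda/\mu}$.

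The only step needing care is that bubbles are not compactly supported, while the statement defines $T_{z,\lambda}$ on $C^\infty_c$. I would note that all formulas are pointwise identities valid for arbitrary measurable functions. Moreover, \ref{it:cons-normapas-anisot} and \ref{it:cons-normap-anisot} make $T_{z,\lambda}$ an isometry for the norms in question, so it extends by density to $\mathcal{D}^{1,p}(\R^n)$ with the same properties. There is no genuine obstacle here; the main risk is bookkeeping of the exponents.
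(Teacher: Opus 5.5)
Your proposal is correct. Each item is the routine change of variables $y=\lambda(x-z)$ combined with the $1$-homogeneity of $H$ and $H_0$ and the explicit formula~\eqref{eq:pH-bubb}, and your exponent bookkeeping checks out, including $T_{z,\lambda}\circ T_{w,\nu}=T_{z+w/\lambda,\lambda\nu}$ and $T_{z,\lambda}(U_p[w,\mu])=U_p[z+w/\lambda,\mu/\lambda]$. The paper gives no separate proof and defers to the Euclidean Lemma~2.1 in~\cite{cg-plap}, which is exactly this direct verification; your reading of point~\ref{it:cons-normapas-anisot} as $L^{\past}$-invariance is the right one, since $H$ applied to a scalar there is evidently a typo for $\abs{\cdot}$.
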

	Obviously all the properties of Lemma~\ref{lem:sym-anisot} hold also if the functions are non smooth with compact support, provided that the involved integrals are finite. \newline
	
	We also observe that both the quantities~$\kappa_0(u)$ and~$\defi(u,\kappa)$, given in~\eqref{eq:defk0-anisot} and~\eqref{eq:def-cfm-anisot}, respectively, are invariant under the action of the operators~$T_{z,\lambda}$.
	
	More precisely, if~$u \in \mathcal{D}^{1,p}(\R^n)$ is a weak solution to~\eqref{eq:maineq-bubb-anisot} and~$v \coloneqq T_{z,\lambda}(u)$, then~$v \in \mathcal{D}^{1,p}(\R^n)$ is a weak solution to
	\begin{equation*}
		\Delta_p^H v + \widehat{\kappa}(x) v^{\past-1} =0 \quad \text{in } \R^n
	\end{equation*}
	where~$\widehat{\kappa}(x) = \kappa(\lambda(x-z))$, moreover
	\begin{equation}
		\label{eq:inv-norma-anisot}
		\widehat{\kappa} \in L^\infty(\R^n) \quad\text{with}\quad \norma*{\widehat{\kappa}}_{L^\infty(\R^n)}=\norma*{\kappa}_{L^\infty(\R^n)},
	\end{equation}
	and also
	\begin{equation}
		\label{eq:inv-k-anisot}
		\kappa_0(u) = \kappa_0(v) \quad\text{and}\quad \defi(u,\kappa) = \defi(v,\widehat{\kappa}).
	\end{equation}
	
	
	\section{Anisotropic Struwe's decomposition}
	\label{sec:anis-struwe}
	
	This section is devoted to establishing a variant of the Struwe's decomposition for the anisotropic~$p$-Laplacian, which will be instrumental in proving the stability result.
	
	Let~$H\in C^2(\R^n\setminus \{0\})$ be a norm satisfying~\eqref{eq:ellip-norm}. On the energy space~$\mathcal{D}^{1,p}(\R^n)$, we define the functional
	\begin{equation*}
		\mathcal{J}(u) \coloneqq \int_{\R^n} V(\nabla u) - \frac{1}{\past} \,\abs*{u}^{\past} dx = \int_{\R^n} \frac{1}{p} \, H^p(\nabla u) - \frac{1}{\past} \,\abs*{u}^{\past} dx,
	\end{equation*}
	and recall that its Fr\'echet derivative satisfies
	\begin{equation*}
		\left\langle \mathcal{J}'(u), \phi \right\rangle = \int_{\R^n}  \left\langle \stressu, \nabla \phi \right\rangle - \abs*{u}^{\past-2} u \phi \, dx,
	\end{equation*}
	where the vector field~$a$ is given by~\eqref{eq:def-a}.

    We also denote by~$\|u\|_{\mathcal{D}^{1,p}(\R^n)} \coloneqq \|\nabla u\|_{L^p(\R^n)}$ the norm on the space~$\mathcal{D}^{1,p}(\R^n)$, and by~$\mathcal{D}^{-1,p'}(\R^n)$ the dual space of~$\mathcal{D}^{1,p}(\R^n)$.

    The main result of this section is the following.

	\begin{theorem}
	\label{th:struwe-def}
		For~$n \in \N$ and~$1<p<n$, let~$\left\{u_m\right\}_m \subseteq \mathcal{D}^{1,p}(\R^n)$ be a sequence such that
		\begin{equation}
		\label{eq:hyp-sruwe-anisot}
			\mathcal{J}(u_m) \to c_\star \quad\text{and}\quad \mathcal{J}'(u_m) \to 0 \quad\text{in } \mathcal{D}^{-1,p'}(\R^n),
		\end{equation}
		for some $c_\star\in \R$. Then, perhaps passing to a subsequence, there exists a possibly trivial solution~$v_0 \in \mathcal{D}^{1,p}(\R^n)$ to
		\begin{equation}
		\label{eq:crit-anis-struwe}
			\Delta_p^{H} v + \abs{v}^{\past-2} v = 0 \quad\text{in } \R^n,
		\end{equation}
		a natural number~$k \in \N$, non-trivial solutions~$v_1,\dots,v_k \in \mathcal{D}^{1,p}(\R^n)$ to~\eqref{eq:crit-anis-struwe}, and~$k$ sequences~$\left\{y^i_m\right\}_{m\in \N} \subseteq \R^n$ and~$\left\{\lambda^i_m\right\} \subseteq (0,+\infty)$ for $i=1,\dots,k$, such that
		\begin{gather*}
			\norma*{u_m - v_0 - \sum_{i=1}^{k} \left(\lambda^i_m\right)^{\!\frac{p-n}{p}} v_i\left(\frac{\cdot - y^i_m}{\lambda^i_m}\right)}_{\mathcal{D}^{1,p}(\R^n)} \to 0, \\
			\norma*{u_m}_{\mathcal{D}^{1,p}(\R^n)}^p \to \sum_{i=0}^{k} \,\norma*{v_i}_{\mathcal{D}^{1,p}(\R^n)}^p \quad\text{and}\quad \norma*{H(\nabla u_m)}_{L^{p}(\R^n)}^p \to \sum_{i=0}^{k} \,\norma*{H(\nabla v_i)}_{L^{p}(\R^n)}^p, \\
			\sum_{i=0}^{k} \mathcal{J}(v_i) = c_\star.
		\end{gather*}
		Moreover, if~$y^i_m \to y^i$ as~$m \to +\infty$, then either~$\lambda^i_m \to 0$ or~$\lambda^i_m \to +\infty$.
		
		Finally, if~$k \geq 1$ and~$u_m \geq 0$ a.e.\ in~$\R^n$ for all~$m \in \N$, then~$v_i>0$ in~$\R^n$ for all~$i=1,\dots,k$.
	\end{theorem}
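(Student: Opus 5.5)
We follow Struwe's original scheme, as adapted to the $p$-Laplacian by Mercuri \& Willem and by Alves, and replace each Euclidean ingredient with its anisotropic counterpart.

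\textbf{Step 1: boundedness and the weak limit.} Combining $\mathcal{J}(u_m)\to c_\star$ with $\langle \mathcal{J}'(u_m),u_m\rangle = o(1)\norma{u_m}_{\mathcal{D}^{1,p}}$ gives
\begin{equation*}
\frac1n\int_{\R^n} H^p(\nabla u_m)\,dx = c_\star + o(1)\,\norma*{u_m}_{\mathcal{D}^{1,p}(\R^n)} .
\end{equation*}
By \eqref{eq:equiv-norma}, $\{u_m\}$ is therefore bounded in $\mathcal{D}^{1,p}(\R^n)$. After passing to a subsequence, $u_m\rightharpoonup v_0$ weakly in $\mathcal{D}^{1,p}$ and $u_m\to v_0$ a.e.

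The delicate point is that $v_0$ solves \eqref{eq:crit-anis-struwe}. Since $a$ is nonlinear, we first need $\nabla u_m\to\nabla v_0$ a.e. We obtain this in the standard Boccardo--Murat way. Test $\mathcal{J}'(u_m)$ with $\eta\,T_\varepsilon(u_m-v_0)$, where $\eta$ is a cutoff and $T_\varepsilon$ a truncation, and use the strict monotonicity
\begin{equation*}
\langle a(\xi)-a(\zeta),\xi-\zeta\rangle \ge c\,(H(\xi)+H(\zeta))^{p-2}\abs{\xi-\zeta}^2 ,
\end{equation*}
which follows from \eqref{eq:nabla2Hp:twosides}. The same argument gives the Brezis--Lieb splitting
\begin{equation*}
\int_{\R^n} H^p(\nabla u_m)\,dx=\int_{\R^n} H^p(\nabla v_0)\,dx+\int_{\R^n} H^p(\nabla(u_m-v_0))\,dx+o(1),
\end{equation*}
together with the analogous splittings for the $L^{\past}$ and $\mathcal{D}^{1,p}$ norms, and also $\mathcal{J}'(u_m-v_0)\to 0$ in $\mathcal{D}^{-1,p'}$. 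The last statement requires a Brezis--Lieb-type lemma for $a(\nabla u_m)-a(\nabla v_0)-a(\nabla(u_m-v_0))\to0$ in $L^{p'}_{\loc}$, and I would prove it using the $C^1$ regularity and $(p-1)$-homogeneity of $a$.

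\textbf{Step 2: iteration via rescaling.} Set $w_m=u_m-v_0$, so that $\mathcal{J}(w_m)\to c_\star-\mathcal{J}(v_0)$. If $w_m\to0$ strongly we stop with $k=0$. Otherwise the Palais--Smale property gives $\liminf\int H^p(\nabla w_m)\ge S_p^n$, and the quantization $\mathcal{J}(w_m)\ge S_p^n/n+o(1)$ follows from the anisotropic Sobolev inequality \eqref{eq:def-SH}.

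Next we use a concentration function
\begin{equation*}
Q_m(r)=\sup_{y}\int_{B_r(y)}\abs{w_m}^{\past}\,dx .
\end{equation*}
Choose $\lambda_m, y_m$ so that $Q_m(\lambda_m)=\delta$ is attained at $y_m$, for a small fixed $\delta$, and set $\tilde w_m=T_{y_m,\lambda_m}(w_m)$, which is the rescaling appearing in the statement. By Lemma~\ref{lem:sym-anisot} the functionals $\mathcal{J}$ and $\mathcal{J}'$ are invariant under this rescaling, so $\tilde w_m$ is again Palais--Smale and converges weakly to some $v_1$.

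To see that $v_1\neq0$, localize the equation with a cutoff $\varphi$ supported in a unit ball. Testing with $\varphi^p\tilde w_m$ and using Sobolev together with the smallness of $\delta$ yields $\nabla\tilde w_m\to\nabla v_1$ strongly in $L^p_{\loc}$. If $v_1$ were $0$, this would contradict the choice $Q=\delta$. The smallness of $\delta$ relative to $S_p^n$ is exactly what makes the term $\int \abs{w}^{\past-2}w\varphi^p w$ absorbable.

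Applying Step 1 to $\tilde w_m$ peels off $v_1$, and the energy drops by at least $S_p^n/n$ at each stage. The process therefore terminates after finitely many steps, and undoing the rescalings yields the stated decomposition together with the norm and energy identities.

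\textbf{Step 3: the remaining claims.} Suppose $y^i_m\to y^i$ but $\lambda^i_m$ stayed bounded away from $0$ and $\infty$. Then the rescaled weak limit would coincide with a weak limit of $w_m$ itself, which is $0$, a contradiction. For positivity when $u_m\ge0$: the negative parts satisfy $\norma{u_m^-}_{\mathcal{D}^{1,p}}\to0$, so each $v_i\ge0$, and being nontrivial, the strong maximum principle for the anisotropic operator gives $v_i>0$. This applies because $v_i\in C^{1,\alpha}$ by regularity for uniformly elliptic Finsler operators, as in Cozzi et al.

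\textbf{Main obstacle.} I expect the hardest part to be the nonlinear bookkeeping for the anisotropic stress field: the a.e.\ convergence of gradients, and the splitting $\mathcal{J}'(u_m-v_0)=\mathcal{J}'(u_m)-\mathcal{J}'(v_0)+o(1)$ in the dual space. For the latter I would first try the estimate
\begin{equation*}
\abs{a(\xi+\zeta)-a(\xi)}\le \varepsilon\abs{\xi}^{p-1}+C_\varepsilon\abs{\zeta}^{p-1},
\end{equation*}
combined with Vitali's theorem.
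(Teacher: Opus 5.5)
Your proposal is correct and follows essentially the same route as the paper: the Mercuri--Willem scheme with a truncation argument for a.e.\ convergence of gradients, Brezis--Lieb splittings (including one for the stress field via $\abs{a(\xi+\zeta)-a(\xi)}\le\varepsilon\abs{\xi}^{p-1}+C_\varepsilon\abs{\zeta}^{p-1}$), a L\'evy concentration function at a small level $\delta$, nontriviality of each profile by localized absorption, and termination because every nontrivial critical point has energy at least $S_p^n/n$. Two points need tightening: the stress-field splitting must hold in $L^{p'}(\R^n)$ globally, not merely in $L^{p'}_{\loc}$, to give $\mathcal{J}'(u_m-v_0)\to0$ in $\mathcal{D}^{-1,p'}(\R^n)$ (your $\varepsilon$-estimate combined with dominated convergence does yield the global statement), and for positivity the fact that $u_m^-=0$ does not by itself pass to the $v_i$, since these are weak limits of rescaled \emph{remainders}, so you should either run the whole argument with $\mathcal{J}_+(u)=\int_{\R^n} \frac1p H^p(\nabla u)-\frac{1}{\past}u_+^{\past}\,dx$ as the paper does, or check that in the $i$-th frame $v_0$ and the earlier rescaled profiles converge weakly to $0$, so that $v_i$ is a weak limit of nonnegative functions.
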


	We will occasionally say that a sequence~$\left\{u_m\right\}_m \subseteq \mathcal{D}^{1,p}(\R^n)$ satisfying~\eqref{eq:hyp-sruwe-anisot} is a Palais-Smale sequence for the energy functional~$\mathcal{J}$.
	
	In the isotropic setting, Theorem~\ref{th:struwe-def} was first established by Benci \& Cerami~\cite{benci-cer} for~$p=2$, building on the original theorem due to Struwe~\cite{struwe}. It was later extended by Alves~\cite{alves} for~$p>2$ and by Mercuri \& Willem~\cite{merc-will} for~$1<p<2$.
	
	Since we could not find an exact reference for the anisotropic case in the literature, we provide a proof of Theorem~\ref{th:struwe-def} below, following the approach in~\cite{merc-will}.
	
	
	\subsection{Preliminary results.} 
	
	In this subsection, we gather some technical results needed for the proof of the main Theorem~\ref{th:struwe-def}. Although a few are simple variations of existing results, we include their proofs here for completeness.
	
	We start by recalling a well-known result due to Brezis \& Lieb~\cite{brezis-lieb}.
	
	\begin{lemma}[Brezis-Lieb Lemma]
	\label{lem:brez-lieb}
		Let~$j: \R^n \to \R$ be a continuous function such that~$j(0)=0$. Moreover, suppose that for every~$\varepsilon \in (0,1)$ there exist two continuous functions~$\varphi_\varepsilon, \psi_\varepsilon: \R^n \to [0,+\infty]$ such that
		\begin{equation}
		\label{eq:toverify-brezis-lieb}
			\abs*{j(x+y)-j(x)} \leq \varepsilon \varphi_\varepsilon(x) + \psi_\varepsilon(y) \quad\text{for all } x,y \in \R^n.
		\end{equation}
		Let~$\left(\Omega,\Sigma,\mu\right)$ be a measure space and define
		\begin{equation*}
			\mathrm{meas}\!\left(\Omega,\R^n\right) \coloneqq \left\{f: \Omega \to \R^n \mid f \text{ is } \Sigma \text{-measurable} \right\}\!.
		\end{equation*}
		Let~$f \in \mathrm{meas}\!\left(\Omega,\R^n\right)$ and~$\left\{g_m\right\}_m \subseteq \mathrm{meas}\!\left(\Omega,\R^n\right)$ satisfy
		\begin{equation}
		\label{eq:assum-br-lieb}
			g_m \to 0 \quad\mu\text{--a.e.\ in } \Omega \quad\text{and}\quad j(f) \in L^1\!\left(\Omega,\Sigma,\mu\right),
		\end{equation}
		and suppose that
		\begin{equation}
		\label{eq:toverify-brezis-lieb-2}
			\int_{\Omega} \varphi_\varepsilon(g_m) \, d\mu \leq C \quad\text{and}\quad \int_{\Omega} \psi_\varepsilon(f) \, d\mu < +\infty \quad\text{for all } \varepsilon \in (0,1),
		\end{equation}
		where~$C>0$ is a constant independent of~$m$ and~$\varepsilon$. Then, it follows that
		\begin{equation*}
			\lim_{m \to +\infty} \int_{\Omega} \,\abs*{j(g_m+f) - j(g_m) - j(f)} \, d\mu = 0.
		\end{equation*}
		Finally, if~$j$ is a norm,~\eqref{eq:toverify-brezis-lieb} holds with~$\varphi_\varepsilon = 0$ and~$\psi_\varepsilon = j$, moreover~\eqref{eq:toverify-brezis-lieb-2} is trivially satisfied under the sole assumption~\eqref{eq:assum-br-lieb}.
	\end{lemma}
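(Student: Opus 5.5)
The plan is to follow Brezis and Lieb's original argument: a pointwise splitting bound, then dominated convergence applied to a truncated quantity. The last assertion about norms is checked separately.

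\textbf{Pointwise bound.} Fix $\varepsilon\in(0,1)$. Applying \eqref{eq:toverify-brezis-lieb} with $x=g_m$ and $y=f$ gives
\begin{equation*}
\abs*{j(g_m+f)-j(g_m)-j(f)} \leq \varepsilon\varphi_\varepsilon(g_m)+\psi_\varepsilon(f)+\abs*{j(f)}.
\end{equation*}

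\textbf{Truncation.} Define
\begin{equation*}
W_{\varepsilon,m}\coloneqq\left(\abs*{j(g_m+f)-j(g_m)-j(f)}-\varepsilon\varphi_\varepsilon(g_m)\right)^+ .
\end{equation*}
By the bound above, $0\leq W_{\varepsilon,m}\leq \psi_\varepsilon(f)+\abs*{j(f)}$. The right-hand side lies in $L^1(\Omega,\Sigma,\mu)$ by \eqref{eq:assum-br-lieb} and \eqref{eq:toverify-brezis-lieb-2}.

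\textbf{Pointwise convergence.} We have $g_m\to0$ $\mu$-a.e. Since $j$ is continuous with $j(0)=0$, it follows that $j(g_m+f)\to j(f)$ and $j(g_m)\to0$ a.e. Hence
\begin{equation*}
\abs*{j(g_m+f)-j(g_m)-j(f)}\to 0 \quad\text{a.e.}
\end{equation*}
Because $\varphi_\varepsilon\geq0$, this forces $W_{\varepsilon,m}\to0$ a.e.

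\textbf{Conclusion of the main statement.} Dominated convergence gives $\int_\Omega W_{\varepsilon,m}\,d\mu\to0$ as $m\to+\infty$. Using
\begin{equation*}
\abs*{j(g_m+f)-j(g_m)-j(f)}\leq W_{\varepsilon,m}+\varepsilon\varphi_\varepsilon(g_m)
\end{equation*}
together with the uniform bound \eqref{eq:toverify-brezis-lieb-2}, we obtain
\begin{equation*}
\limsup_{m\to+\infty}\int_\Omega \abs*{j(g_m+f)-j(g_m)-j(f)}\,d\mu\leq \varepsilon C .
\end{equation*}
Since $C$ does not depend on $\varepsilon$, letting $\varepsilon\to0$ concludes.

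\textbf{Case where $j$ is a norm.} The reverse triangle inequality gives $\abs*{j(x+y)-j(x)}\leq j(y)$. So \eqref{eq:toverify-brezis-lieb} holds with $\varphi_\varepsilon=0$ and $\psi_\varepsilon=j$. Then $\int\varphi_\varepsilon(g_m)\,d\mu=0$, and $\int\psi_\varepsilon(f)\,d\mu=\int j(f)\,d\mu<\infty$ by \eqref{eq:assum-br-lieb}.

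There is no real obstacle here. The only delicate point is the measure-theoretic one: the maps $j(g_m)$, $\varphi_\varepsilon(g_m)$ and $\psi_\varepsilon(f)$ must be measurable. This holds because each is a continuous function composed with a measurable map. We also need $\varphi_\varepsilon(g_m)$ to be finite a.e.; this follows from its integrability.
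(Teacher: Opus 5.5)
Your argument is correct, and it is the original Brezis--Lieb proof: bound $\abs*{j(g_m+f)-j(g_m)-j(f)}$ from above by $\varepsilon\varphi_\varepsilon(g_m)$ plus the positive part $W_{\varepsilon,m}$, dominate $W_{\varepsilon,m}$ by $\psi_\varepsilon(f)+\abs*{j(f)}\in L^1$, let $m\to+\infty$, and then let $\varepsilon\to0$ using that $C$ does not depend on $\varepsilon$. The paper gives no proof of its own and simply recalls the lemma from Brezis and Lieb, so your route matches the one it relies on; your treatment of the norm case through the reverse triangle inequality is also correct.
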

	
	The next lemma is a technical bound involving the stress field.
	
	\begin{lemma}
	\label{lem:simil-diff-quot}
		Let~$1<p\leq 2$. Then, we have
		\begin{equation*}
			\mathsf{c} \coloneqq \sup_{\substack{\eta \in \R^n \setminus \{0\} \\ \xi \in \R^n}} \,\abs*{\frac{a \!\left(\xi+\eta\right) - a \!\left(\xi\right)}{H^{p-1}(\eta)}} < +\infty,
		\end{equation*} 
		where~$a(\xi)=\{a_k(\xi)\}_{k=1,\dots,n}$ is given by~\eqref{eq:def-a}.	
	\end{lemma}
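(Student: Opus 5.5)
Since $a$ is positively $(p-1)$-homogeneous and $H$ is $1$-homogeneous, the quotient in the statement is invariant under the scaling $(\xi,\eta)\mapsto(t\xi,t\eta)$ with $t>0$. We may therefore normalize $H(\eta)=1$ and must show that $\abs{a(\xi+\eta)-a(\xi)}$ is bounded uniformly in $\xi\in\R^n$ and in $\eta$ on the compact set $\partial B^H_1$. We split into two regimes according to the size of $\xi$: a bounded region where $\abs{\xi}$ is comparable to $1$, and a far region where $\abs{\xi}$ is large.

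\emph{Bounded region:} $H(\xi)\le 2$. Here $H(\xi+\eta)\le 3$. The map $a$ is continuous on $\R^n$ (it is $\nabla V$ with $V\in C^1$), and by~\eqref{eq:def-a-2} together with the bound $\abs{\nabla H}\le\sqrt{\Lambda_H}$ we have $\abs{a(\zeta)}\le \sqrt{\Lambda_H}\,H^{p-1}(\zeta)$. Hence
\[
\abs{a(\xi+\eta)-a(\xi)}\le \sqrt{\Lambda_H}\,\bigl(3^{p-1}+2^{p-1}\bigr),
\]
which is a constant bound in this region.

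\emph{Far region:} $H(\xi)>2$. The segment $\xi+t\eta$, $t\in[0,1]$, satisfies $H(\xi+t\eta)\ge H(\xi)-1\ge H(\xi)/2>1$, so it stays away from the origin, where $a\in C^1$. By the mean value theorem and the upper bound in~\eqref{eq:nabla2Hp:twosides}, using $\nabla a=\frac1p\nabla^2 H^p$,
\[
\abs{a(\xi+\eta)-a(\xi)}\le \sup_{t\in[0,1]}\abs{\nabla a(\xi+t\eta)}\,\abs{\eta}\le \frac{C_{p,H}}{p}\,\sup_{t}H^{p-2}(\xi+t\eta)\,\frac{1}{c_H}.
\]
Since $p\le 2$, the exponent $p-2\le 0$, so $H^{p-2}(\xi+t\eta)\le (H(\xi)/2)^{p-2}\le 1$ because $H(\xi)/2>1$. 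This gives a uniform bound.

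Combining the two regions yields $\mathsf c<+\infty$. The only delicate point is the use of $p\le2$ in the far region, which guarantees that the Hessian bound $H^{p-2}$ stays bounded away from the origin. The singularity of $\nabla a$ at $0$ when $p<2$ is avoided entirely by treating the bounded region through continuity instead of differentiation. Equivalently, the bounded region could be handled by compactness of $\{H(\xi)\le2\}\times\partial B^H_1$ together with continuity of $a$.
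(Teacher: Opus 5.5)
Your proof is correct and follows essentially the same route as the paper. You normalize $H(\eta)=1$ by homogeneity, treat $H(\xi)\le 2$ separately, and for $H(\xi)>2$ integrate $\nabla a$ along a segment where $H>1$, so that $\abs{\nabla a}\le C\,H^{p-2}\le C$ because $p\le 2$. The only difference is cosmetic: you bound the region $H(\xi)\le 2$ explicitly through $\abs{a(\zeta)}\le\sqrt{\Lambda_H}\,H^{p-1}(\zeta)$, whereas the paper appeals to continuity.
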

	\begin{proof}
		Define
		\begin{equation*}
			F(\xi,\eta) \coloneqq \abs*{\frac{a \!\left(\xi+\eta\right) - a \!\left(\xi\right)}{H^{p-1}(\eta)}},
		\end{equation*}
		so that, by homogeneity, it is easy to verify that
		\begin{equation*}
			F \!\left(\xi,t \eta\right) = F \!\left(\frac{\xi}{t},\eta\right) \quad \text{for every } t \in \R \setminus \left\{0\right\}\!.
		\end{equation*}
		As a consequence, we obtain
		\begin{equation*}
			\mathsf{c} = \sup_{\substack{H(\eta) = 1 \\ \xi \in \R^n}} F \!\left(\xi, \eta\right).
		\end{equation*}
		Since by continuity
		\begin{equation*}
			\mathsf{c}_1 \coloneqq \sup_{\substack{H(\eta) = 1 \\ H(\xi) \leq 2}} F \!\left(\xi, \eta\right) < +\infty,
		\end{equation*}
		it suffices to prove that
		\begin{equation}
		\label{eq:c2-bound-toprove}
			\mathsf{c}_2 \coloneqq \sup_{\substack{H(\eta) = 1 \\ H(\xi) > 2}} F \!\left(\xi, \eta\right) < +\infty.
		\end{equation}
		To this end, assume that~$H(\eta)=1$ and~$H(\xi)>2$. Then, we have
		\begin{equation}
		\label{eq:Hxith}
			H \!\left(\xi+t \eta\right) \geq H(\xi) - H(\eta) > 1 \quad\text{for every } t \in [0,1].
		\end{equation}
		Moreover, the fundamental theorem of calculus entails that
		\begin{equation*}
			\abs*{a_k \!\left(\xi+\eta\right) - a_k \!\left(\xi\right)} = \abs*{\int_{0}^{1} \frac{d}{dt} a_k \!\left(\xi+t\eta\right) dt} = \abs*{\sum_{j=1}^{n} \int_{0}^{1} \frac{\partial a_k}{\partial \xi_j} \!\left(\xi+t\eta\right) dt \, \eta_j}.
		\end{equation*}
		We now observe that, by Lemma~3.2 in~\cite{carlos+big4} and~\eqref{eq:Hxith}, it holds 
		\begin{equation*}
			\abs*{\frac{\partial a_k}{\partial \xi_j} \!\left(\xi+t\eta\right)} \leq C H^{p-2} \!\left(\xi+t\eta\right) \leq C,
		\end{equation*}
		for a constant~$C>0$ depending only on~$p$ and~$\Lambda_H$. Hence, using~\eqref{eq:equiv-norma}, we conclude that
		\begin{equation*}
			\abs*{a_k \!\left(\xi+\eta\right) - a_k \!\left(\xi\right)} \leq C \,\abs*{\eta} \leq C \, c_H^{-1}.
		\end{equation*}
		This proves~\eqref{eq:c2-bound-toprove} and concludes the proof.
	\end{proof}
	
	The following result is a variant of Lemma~\ref{lem:brez-lieb}.
	
	\begin{lemma}
	\label{lem:brez-lieb-variant}
		Let~$p>1$ and~$\left\{\eta_m\right\}_m \subseteq L^p(\R^n,\R^n)$ be such that~$\eta_m \to 0$ a.e.\ in~$\R^n$ and~$\sup_{m \in \N} \,\norma*{\eta_m}_{L^p(\R^n)} < +\infty$. Then, for any fixed~$w \in L^p(\R^n,\R^n)$, we have
		\begin{equation*}
			\lim_{m \to +\infty} \int_{\R^n} \,\abs*{a \!\left(\eta_m+w\right)- a \!\left(\eta_m\right) - a \!\left(w\right)}^{\frac{p}{p-1}} \, dx = 0,
		\end{equation*}
		where~$a$ is given by~\eqref{eq:def-a}.
	\end{lemma}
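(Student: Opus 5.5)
We apply the Brezis--Lieb Lemma~\ref{lem:brez-lieb} with $\Omega=\R^n$ and Lebesgue measure, $f=w$, $g_m=\eta_m$. Since $a$ is vector-valued, we use the scalar map $j(\xi)\coloneqq\abs{a(\xi)}^{p'}$ only for integrability, and argue directly on the difference $\Phi_m\coloneqq\abs{a(\eta_m+w)-a(\eta_m)-a(w)}^{p'}$, reproducing the mechanism of that proof.

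The key pointwise estimate is the following. For every $\varepsilon\in(0,1)$ there is $C_\varepsilon>0$ such that
\begin{equation*}
\abs*{a(\xi+\eta)-a(\xi)}\leq \varepsilon H^{p-1}(\xi)+C_\varepsilon H^{p-1}(\eta)\quad\text{for all }\xi,\eta\in\R^n.
\end{equation*}
We split into two cases.
\begin{itemize}
\item Case $1<p\leq2$: Lemma~\ref{lem:simil-diff-quot} gives the bound directly, with $\varepsilon=0$ and $C_\varepsilon=\mathsf{c}$.
\item Case $p>2$: if $H(\eta)\geq\delta H(\xi)$, we bound crudely $\abs{a(\xi+\eta)}+\abs{a(\xi)}\lesssim H^{p-1}(\xi)+H^{p-1}(\eta)\lesssim_\delta H^{p-1}(\eta)$. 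If instead $H(\eta)<\delta H(\xi)$, the segment $[\xi,\xi+\eta]$ stays where $H\simeq H(\xi)$. The mean value theorem together with \eqref{eq:nabla2Hp:twosides} then gives $\abs{a(\xi+\eta)-a(\xi)}\lesssim H^{p-2}(\xi)\abs{\eta}\lesssim \delta H^{p-1}(\xi)$. Choosing $\delta$ small in terms of $\varepsilon$ yields the estimate.
\end{itemize}
Here we use $\abs{a(\xi)}=H^{p-1}(\xi)\abs{\nabla H(\xi)}\leq\sqrt{\Lambda_H}\,H^{p-1}(\xi)$.

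Applying this with $\xi=\eta_m$ and $\eta=w$, and using $(s+t)^{p'}\leq(1+\varepsilon)s^{p'}+C'_\varepsilon t^{p'}$, we obtain
\begin{equation*}
\Phi_m\leq C\varepsilon^{p'} H^p(\eta_m)+C_\varepsilon H^p(w).
\end{equation*}
Indeed, $\abs{a(w)}^{p'}\lesssim H^p(w)$ and $H^{(p-1)p'}=H^p$. Now set
\begin{equation*}
G_{\varepsilon,m}\coloneqq\bigl(\Phi_m-C\varepsilon^{p'}H^p(\eta_m)\bigr)^+\leq C_\varepsilon H^p(w)\in L^1(\R^n).
\end{equation*}
Since $\eta_m\to0$ a.e.\ and $a$ is continuous with $a(0)=0$, we have $\Phi_m\to0$ a.e., hence $G_{\varepsilon,m}\to0$ a.e. Dominated convergence gives $\int G_{\varepsilon,m}\,dx\to0$. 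Therefore
\begin{equation*}
\limsup_{m\to+\infty}\int_{\R^n}\Phi_m\,dx\leq C\varepsilon^{p'}\sup_m\int_{\R^n} H^p(\eta_m)\,dx\leq C\varepsilon^{p'}C_H^p\sup_m\norma{\eta_m}_{L^p}^p,
\end{equation*}
and letting $\varepsilon\to0$ concludes the proof.

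The main obstacle is the uniform $\varepsilon$-splitting estimate for $p>2$. This is where the Hessian bounds \eqref{eq:nabla2Hp:twosides} from \cite{carlos+big4} are essential, because $a$ is not globally Lipschitz in that range. One also has to handle the case $\xi=0$ or a segment through the origin, but these fall into the first subcase $H(\eta)\geq\delta H(\xi)$, so no new difficulty arises there.
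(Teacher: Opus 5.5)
Your proof is correct and follows the paper's argument. Both rest on the $\varepsilon$-splitting bound $\lvert a(\xi+\eta)-a(\xi)\rvert\le\varepsilon H^{p-1}(\xi)+C_\varepsilon H^{p-1}(\eta)$, which is immediate from Lemma~\ref{lem:simil-diff-quot} for $p\le 2$ and comes from the Hessian bounds for $p>2$, followed by the truncation $G_{\varepsilon,m}$, dominated convergence, and $\varepsilon\to0$, as in Lemma~3 of \cite{alves}. The differences are cosmetic: you get the splitting from a case distinction $H(\eta)\gtrless\delta H(\xi)$ rather than the global bound $C(\lvert\xi\rvert+\lvert\eta\rvert)^{p-2}\lvert\eta\rvert$ plus Young's inequality, and you truncate after raising to the power $p'$ rather than before.
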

	\begin{proof}
		For~$1<p\leq 2$, the result follows from Lemma~\ref{lem:simil-diff-quot} and an application of the dominated convergence theorem.
			
		In case~$p > 2$, we follow the proof of Lemma~3 in~\cite{alves}. In the following calculations~$C,C'>0$ will denote some constants independent of~$m$, which may vary from line to line. For~$w=0$ the conclusion is trivial, so we may assume that~$w \neq 0$.
			
		Arguing as in the proof of Lemma~\ref{lem:simil-diff-quot}, we deduce that
		\begin{equation*}
			\abs*{a_k \!\left(\eta_m+w\right) - a_k \!\left(\eta_m\right)} \leq C \int_{0}^{1} H^{p-2}\!\left(\eta_m+tw\right) dx \,\abs*{w}.
		\end{equation*}
		Exploiting~\eqref{eq:equiv-norma}, this implies
		\begin{equation*}
			\abs*{a \!\left(\eta_m+w\right) - a \!\left(\eta_m\right)} \leq C \left[\abs*{\eta_m}+\abs*{w}\right]^{p-2} \abs*{w} \leq C \,\abs*{\eta_m}^{p-2} \,\abs*{w} + C' \,\abs*{w}^{p-1}.
		\end{equation*}
		Thus, by Young's inequality, for each fixed~$\varepsilon>0$, we obtain
		\begin{equation}
		\label{eq:for-Gmep}
			\abs*{a \!\left(\eta_m+w\right) - a \!\left(\eta_m\right)} \leq C_\varepsilon \,\abs*{w}^{p-1} + \varepsilon \,\abs*{\eta_m}^{p-1},
		\end{equation}
		where the constant~$C_\varepsilon>0$ depends on~$\varepsilon$ and $p> 2$, but not on~$m$. For~$\varepsilon>0$ we define the function
		\begin{equation}
		\label{eq:defGmep}
			G_{m,\varepsilon} \coloneqq \max\left\{\abs*{a \!\left(\eta_m+w\right)- a \!\left(\eta_m\right) - a \!\left(w\right)} - \varepsilon \,\abs*{\eta_m}^{p-1}, 0\right\}\!,
		\end{equation}
		and observe that, by our assumptions,~$G_{m,\varepsilon} \to 0$ a.e.\ in~$\R^n$ as~$m \to +\infty$. Moreover, by~\eqref{eq:equiv-norma},~\eqref{eq:for-Gmep}, and the definition of~$a$, we have
		\begin{equation*}
			0 \leq G_{m,\varepsilon} \leq C_\varepsilon \,\abs*{w}^{p-1} \in L^{\frac{p}{p-1}} (\R^n),
		\end{equation*}
		where~$C_\varepsilon>0$ depends also on~$\varepsilon$. Therefore, for each fixed~$\varepsilon>0$, the dominated convergence theorem implies that
		\begin{equation}
		\label{eq:limGmep}
		 	\lim_{m \to +\infty} \int_{\R^n} \,\abs*{G_{m,\varepsilon}}^\frac{p}{p-1} \, dx = 0.
		\end{equation}
	 	On the other hand, by~\eqref{eq:defGmep}, we also have
	 	\begin{equation*}
	 		\abs*{a \!\left(\eta_m+w\right)- a \!\left(\eta_m\right) - a \!\left(w\right)} \leq G_{m,\varepsilon} + \varepsilon \,\abs*{\eta_m}^{p-1},
	 	\end{equation*}
 		hence, for any~$\varepsilon \in (0,1)$,
 		\begin{equation*}
 			\abs*{a \!\left(\eta_m+w\right)- a \!\left(\eta_m\right) - a \!\left(w\right)}^\frac{p}{p-1} \leq 2^\frac{p}{p-1} G_{m,\varepsilon}^\frac{p}{p-1} + 2^\frac{p}{p-1} \varepsilon \,\abs*{\eta_m}^{p}.
 		\end{equation*}
 		Consequently, from~\eqref{eq:limGmep}, we deduce that for every~$\varepsilon \in (0,1)$
 		\begin{align*}
 			\limsup_{m \to +\infty} \int_{\R^n} \,\abs*{a \!\left(\eta_m+w\right)- a \!\left(\eta_m\right) - a \!\left(w\right)}^{\frac{p}{p-1}} \, dx &\leq 2^\frac{p}{p-1} \limsup_{m \to +\infty} \,\norma*{\eta_m}_{L^{p}(\R^n)}^p \,\varepsilon \\
 			&\leq 2^\frac{p}{p-1} \sup_{m \in \N} \,\norma*{\eta_m}_{L^{p}(\R^n)}^p \,\varepsilon \leq C \varepsilon,
 		\end{align*}
 		where~$C>0$ is independent of~$\varepsilon$. From the last estimate the conclusion follows at once.
	\end{proof}

	Let us now define the truncation map as
	\begin{equation}
	\label{eq:def-mapT}
		\mathcal{T}(t) \coloneqq
		\begin{cases}
			\begin{aligned}
				& \;\, t				&& \text{if } \abs*{t} \leq 1, \\
				& \frac{t}{\abs*{t}}	&& \text{if } \abs*{t} > 1.
			\end{aligned}
		\end{cases}
	\end{equation}
	With this notation at hand, we can state the following result.

	\begin{proposition}
	\label{prop:exhaust}
		Let~$\left\{\Omega_k\right\}_k \subseteq \R^n$ be an increasing sequence of open, bounded sets such that
		\begin{equation*}
			\bigcup_{k=1}^{+\infty} \Omega_k = \R^n.
		\end{equation*}
		For~$p>1$, let~$\left\{u_m\right\}_m \subseteq \mathcal{D}^{1,p}(\R^n)$ be a sequence such that~$u_m \to u$ weakly in~$\mathcal{D}^{1,p}(\R^n)$ and
		\begin{equation}
		\label{eq:assump-lim-foreveryk}
			\lim_{m \to +\infty} \int_{\Omega_k} \left\langle a \!\left(\nabla u_m\right) - a \!\left(\nabla u\right), \nabla \mathcal{T} \!\left(u_m-u\right) \right\rangle dx = 0 \quad\text{for every } k \in \N.
		\end{equation}
		Then, passing if necessary to a subsequence, we have the following limits
		\begin{gather}
		\label{eq:exhaust-convgrad}
			\nabla u_m \to \nabla u \quad\text{a.e.\ in } \R^n, \\
		\label{eq:exhaust-convnormgrad-euclid}
			\lim_{m \to +\infty} \left[\norma*{\nabla u_m}_{L^p(\R^n)}^p - \norma*{\nabla (u_m - u)}_{L^p(\R^n)}^p\right] = \norma*{\nabla u}_{L^p(\R^n)}^p, \\
		\label{eq:exhaust-convnormgrad}
		\lim_{m \to +\infty} \left[\norma*{H(\nabla u_m)}_{L^p(\R^n)}^p - \norma*{H(\nabla (u_m - u))}_{L^p(\R^n)}^p\right] = \norma*{H(\nabla u)}_{L^p(\R^n)}^p, \\
		\label{eq:exhaust-stress}
			a \!\left(\nabla u_m\right) - a \!\left(\nabla u_m - u\right) \to a \!\left(\nabla u\right) \quad\text{in } L^{\frac{p}{p-1}}(\R^n).
		\end{gather}
	\end{proposition}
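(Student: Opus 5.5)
The plan is to follow the scheme of Mercuri \& Willem (and the classical Boccardo--Murat argument), adapted to the anisotropic stress field $a$. The heart of the matter is \eqref{eq:exhaust-convgrad}; the remaining three limits then follow from the Brezis--Lieb Lemma~\ref{lem:brez-lieb} and its variant Lemma~\ref{lem:brez-lieb-variant}. (In \eqref{eq:exhaust-stress} the term $a(\nabla u_m - u)$ is read as $a(\nabla(u_m-u))$.)

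\emph{Step 1: a.e.\ convergence of gradients.} Fix $k$ and set $e_m \coloneqq \langle a(\nabla u_m)-a(\nabla u), \nabla u_m - \nabla u\rangle \ge 0$; nonnegativity comes from the monotonicity of $a=\nabla V$, since $V$ is convex. Then split $\Omega_k = \{|u_m-u|\le 1\} \cup \{|u_m-u|>1\}$. On the first set $\nabla\mathcal{T}(u_m-u) = \nabla(u_m-u)$, so \eqref{eq:assump-lim-foreveryk} gives
\begin{equation*}
\int_{\Omega_k\cap\{|u_m-u|\le1\}} e_m\,dx \to 0.
\end{equation*}
Since $\mathcal{D}^{1,p}$ embeds compactly in $L^p_{\loc}$ (Rellich on bounded $\Omega_k$, via $L^{\past}$ bounds), we have $u_m\to u$ in $L^1(\Omega_k)$. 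Hence $|\{|u_m-u|>1\}\cap\Omega_k|\to0$. For $\theta\in(0,1)$, Hölder's inequality together with the $L^1$ bound on $e_m$ (from boundedness of $\nabla u_m$ in $L^p$ and $|a(\xi)|\le C|\xi|^{p-1}$) gives
\begin{equation*}
\int_{\Omega_k} e_m^\theta \to 0.
\end{equation*}
So, along a subsequence, $e_m\to0$ a.e.\ in $\Omega_k$. A diagonal argument over $k$ extends this to a.e.\ in $\R^n$.

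Next I must show that at a.e.\ point $x$, $e_m(x)\to0$ forces $\nabla u_m(x)\to\nabla u(x)$. The tool is strict monotonicity with coercivity: $\langle a(\xi)-a(\zeta),\xi-\zeta\rangle>0$ for $\xi\ne\zeta$, which follows from \eqref{eq:nabla2Hp:twosides}. Coercivity comes from
\begin{equation*}
\langle a(\xi),\xi\rangle = H^p(\xi) \ge c|\xi|^p
\end{equation*}
and $|a(\zeta)|\le C|\zeta|^{p-1}$, which together show that $\nabla u_m(x)$ stays bounded. Any limit point $\xi$ then satisfies $\langle a(\xi)-a(\nabla u(x)),\xi-\nabla u(x)\rangle=0$, hence $\xi=\nabla u(x)$. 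I expect this pointwise strict-monotonicity step to be the main point needing care. The Hessian bound \eqref{eq:nabla2Hp:twosides} degenerates or blows up at $0$, so I would integrate $\nabla^2 V$ along the segment from $\zeta$ to $\xi$, which avoids the origin except on a null set of directions; alternatively, I would use strict convexity of $H^p$ directly.

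\emph{Step 2: the limits.} Apply Lemma~\ref{lem:brez-lieb} with $g_m=\nabla(u_m-u)\to0$ a.e.\ and $f=\nabla u$. For \eqref{eq:exhaust-convnormgrad-euclid} take $j=|\cdot|^p$; the standard estimate $||x+y|^p-|x|^p|\le\varepsilon|x|^p+C_\varepsilon|y|^p$ verifies \eqref{eq:toverify-brezis-lieb}, and the bounds \eqref{eq:toverify-brezis-lieb-2} hold since $g_m$ is bounded in $L^p$. For \eqref{eq:exhaust-convnormgrad} take $j=H^p$; the same inequality holds, because $H$ is a norm equivalent to the Euclidean one by \eqref{eq:equiv-norma}. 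Finally, \eqref{eq:exhaust-stress} is exactly Lemma~\ref{lem:brez-lieb-variant} with $\eta_m=\nabla(u_m-u)$ and $w=\nabla u$.
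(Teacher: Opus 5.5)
Your proof is correct and follows essentially the same route as the paper. Both arguments split $\Omega_k$ along $E_m=\{|u_m-u|\le 1\}$, use $e_m\ge 0$ to extract a subsequence with $e_m\to0$ a.e., pass to $\R^n$ by a diagonal argument, and then obtain the three limits from Lemmas~\ref{lem:brez-lieb} and~\ref{lem:brez-lieb-variant}.

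The only differences are minor and both versions work. First, the paper gets $e_m\to0$ a.e.\ directly from $e_m\chi_{E_m}\to0$ a.e.\ and $\chi_{E_m}\to1$ a.e., so it does not need your $e_m^\theta$ trick. Second, it turns $e_m\to0$ into $\nabla u_m\to\nabla u$ using the quantitative monotonicity bounds \eqref{eq:coerc-1}--\eqref{eq:coerc-2}, where you use a strict-monotonicity-plus-coercivity limit-point argument.
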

	\begin{proof}
		We immediately observe that, once~\eqref{eq:exhaust-convgrad} has been established,~\eqref{eq:exhaust-convnormgrad-euclid}--\eqref{eq:exhaust-stress} follow from Lemma~\ref{lem:brez-lieb} and Lemma~\ref{lem:brez-lieb-variant}.
		
		Fix~$k \in \N$. We claim that, possibly up to a subsequence,
		\begin{equation}
		\label{eq:claim-conv-grad}
			\nabla u_m \to \nabla u \quad\text{a.e.\ in } \Omega_k.
		\end{equation}
		We begin by noticing that, since~$\Omega_k$ is bounded, by the weak convergence in~$\mathcal{D}^{1,p}(\R^n)$ and the Rellich–Kondrachov theorem, it follows that
		\begin{equation}
		\label{eq:convae-um-tou}
			u_m \to u \quad\text{a.e.\ in } \Omega_k,
		\end{equation}
		up to a subsequence, as~$\mathcal{D}^{1,p}(\R^n) $ compactly embeds into~$L^q(\Omega_k)$ for every~$q \in [1,\past)$.
			
		We now argue as in the proof of Theorem~1.1 in~\cite{deV-willem} to establish~\eqref{eq:claim-conv-grad}. Define
		\begin{equation*}
			E_{m} \coloneqq \left\{x \in \Omega_k \,\lvert\, \abs*{u_m(x)-u(x)} \leq 1\right\} \quad\text{and}\quad e_m \coloneqq \left\langle a \!\left(\nabla u_m\right) - a \!\left(\nabla u\right), \nabla (u_m-u) \right\rangle\!.
		\end{equation*}
		From~\eqref{eq:assump-lim-foreveryk}, we infer that
		\begin{equation*}
			\int_{E_m} e_m \, dx = \int_{\Omega_k} \left\langle a \!\left(\nabla u_m\right) - a \!\left(\nabla u\right), \nabla \mathcal{T} \!\left(u_m-u\right) \right\rangle dx \to 0 \quad\text{as } m \to +\infty.
		\end{equation*}
        
		Since~$e_m\geq 0$ by the convexity of~$H^p$, extracting a subsequence we may assume that~$e_m \chi_{E_m} \to 0$ a.e.\ in~$\Omega_k$. Since~\eqref{eq:convae-um-tou} implies that~$\chi_{E_m} \to 1$ a.e.\ in~$\Omega_k$, we necessarily have
		\begin{equation}
		\label{eq:conv-em}
			e_m \to 0 \quad \text{a.e.\ in } \Omega_k.
		\end{equation}
		Thanks to Lemma~3.1 in~\cite{carlos+big4}, we may use Lemma~2.1 in~\cite{dam-comp} and deduce that 
		\begin{equation}
		\label{eq:coerc-1}
			\left\langle a \!\left(\nabla u_m\right) - a \!\left(\nabla u\right), \nabla (u_m-u) \right\rangle \geq \gamma \,\abs*{\nabla (u_m-u)}^p \quad\text{for } p \geq 2,
		\end{equation}
		for a constant~$\gamma>0$, and
		\begin{multline}
		\label{eq:coerc-2}
			\left\langle a \!\left(\nabla u_m\right) - a \!\left(\nabla u\right), \nabla (u_m-u) \right\rangle \\
			\geq \gamma \left(\abs*{\nabla u_m}+\abs*{\nabla u}\right)^{p-2} \abs*{\nabla (u_m-u)}^2 \quad\text{for } 1<p<2,
		\end{multline}
		provided that~$\abs*{\nabla u_m}+\abs*{\nabla u}>0$. From~\eqref{eq:conv-em}--\eqref{eq:coerc-2} and recalling the definition of~$e_m$, we infer the validity of~\eqref{eq:claim-conv-grad}.
		
		Finally,~\eqref{eq:exhaust-convgrad} follows from~\eqref{eq:claim-conv-grad} via a standard Cantor diagonal argument.
		\end{proof}
		
		The following result provides some convergence properties for a Palais–Smale sequence for the energy functional~$\mathcal{J}$.
	
		\begin{lemma}
		\label{lem:conv-D1p-ener-Frediff}
			Let~$1<p<n$ and~$\left\{u_m\right\}_m \subseteq \mathcal{D}^{1,p}(\R^n)$ be a sequence such that~$u_m \to u$ weakly in~$\mathcal{D}^{1,p}(\R^n)$,~$u_m \to u$ a.e.\ in~$\R^n$, and
			\begin{equation*}
				\mathcal{J}(u_m) \to c_\star \quad\text{and}\quad \mathcal{J}'(u_m) \to 0 \quad\text{in } \mathcal{D}^{-1,p'}(\R^n).
			\end{equation*}
			Then, possibly passing to a subsequence, we have~$\nabla u_m \to \nabla u$ a.e.\ in~$\R^n$ and~$\mathcal{J}'(u)=0$. Moreover, defining~$v_m \coloneqq u_m - u$, it follows that
			\begin{gather}
			\label{eq:lemconv-D1p}
				\lim_{m \to +\infty} \left[\norma*{u_m}_{\mathcal{D}^{1,p}(\R^n)}^p - \norma*{v_m}_{\mathcal{D}^{1,p}(\R^n)}^p\right] = \norma*{u}_{\mathcal{D}^{1,p}(\R^n)}^p, \\
			\label{eq:lemconv-LpH}
				\lim_{m \to +\infty} \left[\norma*{H(\nabla u_m)}_{L^{p}(\R^n)}^p - \norma*{H(\nabla v_m)}_{L^{p}(\R^n)}^p\right] = \norma*{H(\nabla u)}_{L^{p}(\R^n)}^p, \\
			\label{eq:lemconv-energ}
				\mathcal{J}(v_m) \to c_\star - \mathcal{J}(u), \\
			\label{eq:lemconv-Fredif}
				\mathcal{J}'(v_m) \to 0 \quad\text{in } \mathcal{D}^{-1,p'}(\R^n).
			\end{gather}
		\end{lemma}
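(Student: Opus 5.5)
The plan is to reduce everything to Proposition~\ref{prop:exhaust} applied with the exhaustion $\Omega_k \coloneqq B_k(0)$. The first and main step is to verify hypothesis~\eqref{eq:assump-lim-foreveryk}. Fix $k$ and a cut-off $\varphi\in C^\infty_c(B_{k+1})$ with $0\le\varphi\le1$ and $\varphi\equiv1$ on $B_k$. Since $\mathcal{T}$ is $1$-Lipschitz and bounded, $\varphi\,\mathcal{T}(u_m-u)$ is bounded in $\mathcal{D}^{1,p}(\R^n)$, so $\langle\mathcal{J}'(u_m),\varphi\mathcal{T}(u_m-u)\rangle\to0$. Expanding this pairing gives
\begin{equation*}
\int \varphi\,\langle a(\nabla u_m),\nabla\mathcal{T}(u_m-u)\rangle\,dx + \int \mathcal{T}(u_m-u)\,\langle a(\nabla u_m),\nabla\varphi\rangle\,dx - \int |u_m|^{\past-2}u_m\,\varphi\,\mathcal{T}(u_m-u)\,dx \to 0 .
\end{equation*}
\begin{itemize}
\item The second integral tends to $0$ because $a(\nabla u_m)$ is bounded in $L^{p'}$, while $\mathcal{T}(u_m-u)\to0$ in $L^p(B_{k+1})$ by Rellich and dominated convergence ($|\mathcal{T}|\le1$).
\item The third integral tends to $0$ by the same reasoning: $|u_m|^{\past-1}$ is bounded in $L^{(\past)'}$, and $\mathcal{T}(u_m-u)\to0$ in $L^{\past}(B_{k+1})$ (bounded and a.e.\ convergent on a bounded set).
\item $\int\varphi\,\langle a(\nabla u),\nabla\mathcal{T}(u_m-u)\rangle\,dx\to0$ because $\nabla\mathcal{T}(u_m-u)\rightharpoonup0$ weakly in $L^p$.
\end{itemize}
Subtracting the last item from the first integral, $\int\varphi\,\langle a(\nabla u_m)-a(\nabla u),\nabla\mathcal{T}(u_m-u)\rangle\,dx\to0$. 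The integrand is nonnegative, since $\nabla\mathcal{T}(u_m-u)=\chi_{\{|u_m-u|\le1\}}\nabla(u_m-u)$ and $a$ is monotone. Hence the integral over $B_k$ also tends to $0$, which is~\eqref{eq:assump-lim-foreveryk}.

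Proposition~\ref{prop:exhaust} then gives, along a subsequence, $\nabla u_m\to\nabla u$ a.e., together with~\eqref{eq:lemconv-D1p} and~\eqref{eq:lemconv-LpH}, and also $a(\nabla u_m)-a(\nabla v_m)\to a(\nabla u)$ in $L^{p'}$. To get $\mathcal{J}'(u)=0$, fix $\phi\in C^\infty_c$. Then $a(\nabla u_m)\rightharpoonup a(\nabla u)$ weakly in $L^{p'}$, since it is bounded and converges a.e. Likewise $|u_m|^{\past-2}u_m\rightharpoonup|u|^{\past-2}u$ weakly in $L^{(\past)'}$. Passing to the limit in $\langle\mathcal{J}'(u_m),\phi\rangle\to0$ gives $\langle\mathcal{J}'(u),\phi\rangle=0$, and density extends this to all of $\mathcal{D}^{1,p}$.

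For~\eqref{eq:lemconv-energ}, combine~\eqref{eq:lemconv-LpH} with the classical Brezis--Lieb lemma (Lemma~\ref{lem:brez-lieb} with $j=|\cdot|^{\past}$), which gives $\|u_m\|_{\past}^{\past}-\|v_m\|_{\past}^{\past}\to\|u\|_{\past}^{\past}$. Together these yield $\mathcal{J}(u_m)-\mathcal{J}(v_m)\to\mathcal{J}(u)$.

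For~\eqref{eq:lemconv-Fredif}, write
\begin{equation*}
\mathcal{J}'(v_m)=\mathcal{J}'(u_m)-\mathcal{J}'(u)+R_m, \qquad \langle R_m,\phi\rangle=\int\langle a(\nabla v_m)-a(\nabla u_m)+a(\nabla u),\nabla\phi\rangle - \bigl(|v_m|^{\past-2}v_m-|u_m|^{\past-2}u_m+|u|^{\past-2}u\bigr)\phi .
\end{equation*}
By the $L^{p'}$ convergence from Proposition~\ref{prop:exhaust}, the gradient part of $R_m$ tends to $0$ uniformly over $\|\phi\|_{\mathcal{D}^{1,p}}\le1$. For the nonlinear term one needs the standard Brezis--Lieb type fact
\begin{equation*}
|v_m|^{\past-2}v_m-|u_m|^{\past-2}u_m+|u|^{\past-2}u\to0 \quad\text{in } L^{(\past)'} .
\end{equation*}
I would prove it exactly as in Lemma~\ref{lem:brez-lieb-variant}, via the $\varepsilon$-truncation $G_{m,\varepsilon}$ and the inequality $\bigl||s+t|^{q-2}(s+t)-|s|^{q-2}s\bigr|\le\varepsilon|s|^{q-1}+C_\varepsilon|t|^{q-1}$. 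Sobolev's inequality then controls the term uniformly in $\phi$.

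The main obstacle is the first step: controlling the lower-order terms uniformly, using only the bounds available from weak convergence, so that the monotonicity term on $\Omega_k$ is forced to vanish.
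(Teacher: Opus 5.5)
Your proposal is correct and follows essentially the paper's route: test $\mathcal{J}'(u_m)$ against a cut-off times $\mathcal{T}(u_m-u)$, use the monotonicity of $a$ to localize to $B_k$, then invoke Proposition~\ref{prop:exhaust} and the Brezis--Lieb lemma, and finally split $\mathcal{J}'(v_m)$ as $\mathcal{J}'(u_m)-\mathcal{J}'(u)$ plus remainders. The only real deviation is the power-nonlinearity remainder: you use the $\varepsilon$-truncation argument of Lemma~\ref{lem:brez-lieb-variant}, whereas the paper uses a $\theta$-splitting inequality with H\"older and weak convergence of powers of $|v_m|$; both give convergence in $L^{(\past)'}$, and your explicit justification of $\mathcal{J}'(u)=0$ via a.e.\ convergence of $\nabla u_m$ is, if anything, more careful than the paper's.
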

		\begin{proof}
			We begin by noticing that the weak convergence in~$\mathcal{D}^{1,p}(\R^n)$ and the Sobolev inequality imply that
			\begin{equation}
			\label{eq:unif-bound-p-past-1}
				\norma*{\nabla u_m}_{L^p(\R^n)} \leq C \quad\text{and}\quad \norma*{u_m}_{L^{\past}\!(\R^n)} \leq C,
			\end{equation}
			for some constant~$C>0$ independent of $m\in \N$. Moreover, by the definition of~$\mathcal{T}$ in~\eqref{eq:def-mapT}, we have
			\begin{equation}
			\label{eq:weak-conv-T}
				\mathcal{T}(u_m - u) \to 0 \quad\text{weakly in } \mathcal{D}^{1,p}(\R^n) \;\;\text{and a.e.\ in }\R^n.
			\end{equation}
			For any~$k \in \N$, define~$\Omega_k \coloneqq B_k(0)$. Now, fix a~$k \in \N$ and choose~$\eta \in C^\infty_c(\R^n)$ such that~$0 \leq \eta \leq 1$,~$\eta = 1$ in~$\Omega_k$,~$\eta = 0$ in~$\R^n \setminus \Omega_{k+1}$, and~$\abs*{\nabla \eta} \leq 1$ in~$\Omega_{k+1} \setminus \Omega_k$. Using the Poincaré inequality, it is easy to see that the sequence~$\{\mathcal{T} \!\left(u_m - u\right) \eta\}_m \subseteq \mathcal{D}^{1,p}(\R^n)$ is uniformly bounded. Hence, passing to a subsequence if necessary,~\eqref{eq:weak-conv-T} implies that 
			\begin{equation}
			\label{eq:weak-conv-Teta}
				\mathcal{T} \!\left(u_m - u\right) \eta \to 0 \quad\text{weakly in } \mathcal{D}^{1,p}(\R^n).
			\end{equation}
			Define~$\mathscr{A}_m \coloneqq a \!\left(\nabla u_m\right) - a \!\left(\nabla u\right)$. Then, we can write
			\begin{align*}
				\int_{\Omega_k} &\left\langle \mathscr{A}_m, \nabla \mathcal{T} \!\left(u_m-u\right) \right\rangle \eta \, dx \\
				&= \int_{\R^n} \left\langle \mathscr{A}_m, \nabla \!\left(\mathcal{T} \!\left(u_m-u\right) \eta\right) \right\rangle dx - \int_{\R^n} \left\langle \mathscr{A}_m, \nabla \eta \right\rangle \mathcal{T} \!\left(u_m-u\right) dx,
			\end{align*}
			and observe that
			\begin{align}
			\label{eq:pass-limit-anablaT-2}
				\int_{\R^n} &\left\langle \mathscr{A}_m, \nabla \!\left(\mathcal{T} \!\left(u_m-u\right) \eta\right) \right\rangle dx = \left\langle \mathcal{J}'(u_m), \mathcal{T} \!\left(u_m-u\right) \eta \right\rangle \\
			\notag
				&\quad + \int_{\R^n} \,\abs{u_m}^{\past-2} u_m \, \mathcal{T} \!\left(u_m-u\right) \eta \, dx  - \int_{\R^n} \left\langle a \!\left(\nabla u\right), \nabla \!\left(\mathcal{T} \!\left(u_m-u\right) \eta\right) \right\rangle dx.
			\end{align}
			Using our assumptions and the weak convergence in~\eqref{eq:weak-conv-Teta}, we obtain
			\begin{equation}
			\label{eq:pass-limit-anablaT-3}
				\left\langle \mathcal{J}'(u_m), \mathcal{T} \!\left(u_m-u\right) \eta \right\rangle \to 0 \quad\text{and}\quad \int_{\R^n} \left\langle a \!\left(\nabla u\right), \nabla \!\left(\mathcal{T} \!\left(u_m-u\right) \eta\right) \right\rangle dx \to 0.
			\end{equation}
			Now, we prove that, up to a subsequence,
			\begin{equation}
			\label{eq:pass-limit-anablaT-4}
				\int_{\R^n} \,\abs{u_m}^{\past-2} u_m \, \mathcal{T} \!\left(u_m-u\right) \eta \, dx \to 0.
			\end{equation}
			Indeed, since~$\mathcal{T}$ is bounded and~$\eta \in C^\infty_c(\R^n)$, using H\"older inequality,~\eqref{eq:unif-bound-p-past-1}, the fact that~$\mathcal{T}(u_m-u) \to 0$ a.e.\ in~$\R^n$, and the dominated convergence theorem, we deduce that
			\begin{equation*}
				\abs*{\int_{\R^n} \,\abs{u_m}^{\past-2} u_m \, \mathcal{T} \!\left(u_m-u\right) \eta \, dx} \leq C \left(\int_{\mathrm{supp} \,\eta} \,\abs*{\mathcal{T}(u_m-u)}^{\past} dx \right)^{\!\frac{1}{\past}} \to 0.
			\end{equation*}
			Thus, combining~\eqref{eq:pass-limit-anablaT-2},~\eqref{eq:pass-limit-anablaT-3}, and~\eqref{eq:pass-limit-anablaT-4}, we get
			\begin{equation}
			\label{eq:passata-limit-anablaT-1}
				\int_{\R^n} \left\langle \mathscr{A}_m, \nabla \!\left(\mathcal{T} \!\left(u_m-u\right) \eta\right) \right\rangle dx \to 0.
			\end{equation}
			Finally, we show that
			\begin{equation}
			\label{eq:passata-limit-anablaT-2}
				\int_{\R^n} \left\langle \mathscr{A}_m, \nabla \eta \right\rangle \mathcal{T} \!\left(u_m-u\right) dx \to 0.
			\end{equation}
			To this end, using~\eqref{eq:def-a-2} and~\eqref{eq:equiv-norma}, note that
			\begin{equation*}
				\abs*{\mathscr{A}_m} \leq C \left[\abs*{\nabla u_m}^{p-1} + \abs*{\nabla u}^{p-1}\right] \!,
			\end{equation*}
			for some~$C>0$ independent of~$m$. Therefore, arguing as in~\eqref{eq:pass-limit-anablaT-4}, we have
			\begin{equation*}
				\abs*{\int_{\R^n} \left\langle \mathscr{A}_m, \nabla \eta \right\rangle \mathcal{T} \!\left(u_m-u\right) dx} \leq C \left(\int_{\mathrm{supp} \,\eta} \,\abs*{\mathcal{T}(u_m-u)}^{p} \, dx \right)^{\!\frac{1}{p}} \to 0.
			\end{equation*}
			Since~$\eta \equiv 1$ on~$\Omega_k$, combining~\eqref{eq:passata-limit-anablaT-1} with~\eqref{eq:passata-limit-anablaT-2}, we conclude that, for an arbitrary~$k \in \N$,
			\begin{equation*}
				\int_{\Omega_k} \left\langle a \!\left(\nabla u_m\right) - a \!\left(\nabla u\right), \nabla \mathcal{T} \!\left(u_m-u\right) \right\rangle dx \to 0.
			\end{equation*}
		
			Applying Proposition~\ref{prop:exhaust}, we deduce that, up to a subsequence,~$\nabla u_m \to \nabla u$ a.e.\ in~$\R^n$ and~\eqref{eq:lemconv-D1p}--\eqref{eq:lemconv-LpH} directly follow from~\eqref{eq:exhaust-convnormgrad-euclid}--\eqref{eq:exhaust-convnormgrad}. In particular,~\eqref{eq:lemconv-LpH} reads as
			\begin{equation}
			\label{eq:nablaH-o(1)}
				\norma*{H(\nabla v_m)}_{L^p(\R^n)}^p = \norma*{H(\nabla u_m)}_{L^p(\R^n)}^p - \norma*{H(\nabla u)}_{L^p(\R^n)}^p + o(1).
			\end{equation}
			As~$u_m \to u$ a.e.\ in~$\R^n$, Lemma~\ref{lem:brez-lieb} entails that
			\begin{equation}
			\label{eq:past-o(1)}
				\norma*{v_m}_{L^{\past}\!(\R^n)}^{\past} = \norma*{u_m}_{L^{\past}\!(\R^n)}^{\past} - \norma*{u}_{L^{\past}\!(\R^n)}^{\past} + o(1).
			\end{equation}
			Therefore,~\eqref{eq:nablaH-o(1)} and~\eqref{eq:past-o(1)} yield
			\begin{equation*}
				\mathcal{J}(v_m) = \frac{1}{p} \,\norma*{H(\nabla v_m)}_{L^p(\R^n)}^p - \frac{1}{\past} \,\norma*{v_m}_{L^{\past}\!(\R^n)}^{\past} = \mathcal{J}(u_m) - \mathcal{J}(u) + o(1) = c_\star - \mathcal{J}(u) + o(1),
			\end{equation*}
			which establishes the validity of~\eqref{eq:lemconv-energ}.
			
			We are left to prove~\eqref{eq:lemconv-Fredif}. By assumption, we have~$u_m \to u$ weakly in~$\mathcal{D}^{1,p}(\R^n)$ and~$\mathcal{J}'(u_m) \to 0$ in~$\mathcal{D}^{-1,p'}(\R^n)$, hence it follows that~$\mathcal{J}'(u)=0$.
			
			Finally, for any fixed~$\phi \in C^\infty_c(\R^n)$ with~$\norma*{\phi}_{\mathcal{D}^{1,p}(\R^n)} = 1$, we write
			\begin{equation}
			\label{eq:rewrite-frediff}
				\left\langle \mathcal{J}'(v_m) , \phi \right\rangle = \left\langle \mathcal{J}'(u_m) - \mathcal{J}'(u) , \phi \right\rangle + \mathscr{R}_m^1 + \mathscr{R}_m^2,
			\end{equation}
			where
			\begin{align*}
				\mathscr{R}_m^1 &\coloneqq \int_{\R^n} \left\langle a \!\left(\nabla v_m\right) - a \!\left(\nabla u_m\right) + a \!\left(\nabla u\right) , \nabla \phi \right\rangle dx, \\
				\mathscr{R}_m^2 &\coloneqq \int_{\R^n} \left[\abs*{u_m}^{\past-2} u_m - \abs*{v_m}^{\past-2} v_m - \abs*{u}^{\past-2} u \right] \phi \, dx.
			\end{align*}
			We now note that, using~\eqref{eq:exhaust-stress}, we have
			\begin{equation*}
				\abs{\mathscr{R}_m^1} \leq \norma*{a \!\left(\nabla v_m\right) - a \!\left(\nabla u_m\right) + a \!\left(\nabla u\right)}_{L^{p'}\!(\R^n)} \to 0.
			\end{equation*}
			We now claim that
			\begin{equation}
			\label{eq:convR2-toprove}
				\mathscr{R}_m^2 \to 0 \quad\text{as } m \to +\infty.
			\end{equation}
			Thus, from~\eqref{eq:rewrite-frediff} and our assumptions, we infer
			\begin{equation*}
				\mathcal{J}'(v_m) = \mathcal{J}'(u_m) - \mathcal{J}'(u) + o(1) = o(1),
			\end{equation*}
			which implies the validity of~\eqref{eq:lemconv-Fredif}.
			
			Thus, to complete the proof it remains to establish~\eqref{eq:convR2-toprove}. To this end, we follow the argument from Step~1.3 on pages~305-306 of~\cite{saint}. Define
			\begin{align*}
				\Phi_m &\coloneqq \abs*{u_m}^{\past-2} u_m - \abs*{v_m}^{\past-2} v_m - \abs*{u}^{\past-2} u \\
				&= \abs*{v_m+u}^{\past-2} (v_m+u) - \abs*{v_m}^{\past-2} v_m - \abs*{u}^{\past-2} u.
			\end{align*}
			Recalling that for~$q>1$ and~$\theta>0$ sufficiently small, depending only on~$q$, there exists a constant~$C>0$ such that
			\begin{multline*}
				\abs*{\abs*{a+b}^{q-2} (a+b) - \abs*{a}^{q-2} a - \abs*{b}^{q-2} b} \\ \leq C \left[\abs*{a}^{q-1-\theta} \,\abs*{b}^\theta + \abs*{b}^{q-1-\theta} \,\abs*{a}^\theta \right] \quad\text{for all } a,b \in \R,
			\end{multline*}
			so that
			\begin{equation*}
				\abs*{\Phi_m} \leq C \left[\abs*{v_m}^{\past-1-\theta} \,\abs*{u}^\theta + \abs*{u}^{\past-1-\theta} \,\abs*{v_m}^\theta \right]\!.
			\end{equation*}
			Applying H\"older inequality, we obtain
			\begin{equation}
			\label{eq:norm-vanish-toprove}
				\abs{\mathscr{R}_m^2} \leq C \left[\norma*{\abs*{v_m}^{\past-1-\theta} \,\abs*{u}^\theta}_{L^{(\past)'}\!(\R^n)} + \norma*{\abs*{u}^{\past-1-\theta} \,\abs*{v_m}^\theta}_{L^{(\past)'}\!(\R^n)} \right] \!,
			\end{equation}
			where~$C>0$ is independent of~$m$. Next, we show that, up to a subsequence,  the norms on the right-hand side of~\eqref{eq:norm-vanish-toprove} vanish as~$m \to +\infty$. We start by defining the H\"older conjugate pair
			\begin{equation*}
				\sigma_1 \coloneqq \frac{\past-1}{\theta} \quad\text{and}\quad \sigma_2 \coloneqq \frac{\past-1}{\past-1-\theta}.
			\end{equation*}
			Thus, we have
			\begin{gather*}
				\mathscr{N}_1 \coloneqq \norma*{\abs*{v_m}^{\past-1-\theta} \,\abs*{u}^\theta}_{L^{(\past)'}\!(\R^n)}^{(\past)'} = \int_{\R^n} \,\abs{v_m}^{\frac{\past}{\sigma_2}} \,\abs*{u}^{\frac{\past}{\sigma_1}} \, dx, \\
				\mathscr{N}_2 \coloneqq \norma*{\abs*{u}^{\past-1-\theta} \,\abs*{v_m}^\theta}_{L^{(\past)'}\!(\R^n)}^{(\past)'} = \int_{\R^n} \,\abs{v_m}^{\frac{\past}{\sigma_1}} \,\abs*{u}^{\frac{\past}{\sigma_2}} \, dx.
			\end{gather*}
			Using~\eqref{eq:unif-bound-p-past-1} and since~$v_m \to 0$ a.e.\ in~$\R^n$, up to subsequences, we obtain
			\begin{equation*}
				\abs{v_m}^{\past/\sigma_j} \to 0 \quad\text{weakly in } L^{\sigma_j}(\R^n) \quad\text{for } j=1,2.
			\end{equation*}
			Since~$u \in L^{\past}\!(\R^n)$, the weak convergence above implies that
			\begin{equation*}
				\mathscr{N}_j \to 0 \quad\text{for } j=1,2.
			\end{equation*}
			This, in turn, establishes~\eqref{eq:norm-vanish-toprove}, thereby concluding the proof.
		\end{proof}
		
		The following lemma is necessary in the iterative step of the main proof.
		
		\begin{lemma}
		\label{lem:struwe-iter}
			Consider the sequences~$\left\{y_m\right\}_m \subseteq \R^n$ and~$\left\{\lambda_m\right\}_m \subseteq (0,+\infty)$. For a sequence~$\{u_m\}_m \subseteq \mathcal{D}^{1,p}(\R^n)$, define
			\begin{equation*}
				v_m(x) \coloneqq \lambda_m^{\frac{n-p}{p}} u_m \!\left(\lambda_m x + y_m\right) \quad\text{for all } x \in \R^n.
			\end{equation*}
			Additionally, suppose that
			\begin{gather}
			\label{eq:lemit-convm}
				v_m \to v \quad\text{weakly in } \mathcal{D}^{1,p}(\R^n) \quad\text{and}\quad v_m \to v \quad\text{a.e.\ in } \R^n, \\
			\label{eq:lemit-energ-conv}
				\mathcal{J}(u_m) \to c_\star, \\
			\label{eq:lemit-Frediff-conv}
				\mathcal{J}'(u_m) \to 0 \quad\text{in } \mathcal{D}^{-1,p'}(\R^n).
			\end{gather}
			Then, possibly passing to a subsequence, we have~$\nabla v_m \to \nabla v$ a.e.\ in~$\R^n$ and~$\mathcal{J}'(v)=0$. Moreover, defining
			\begin{equation*}
				w_m(x) \coloneqq u_m(x) - \lambda_m^{\frac{p-n}{p}} v \!\left( \frac{x - y_m}{\lambda_m}\right) \quad\text{for all } x \in \R^n,
			\end{equation*}
			it follows that
			\begin{gather}
			\label{eq:lemiter-D1p-w}
				\lim_{m \to +\infty} \left[\norma*{u_m}_{\mathcal{D}^{1,p}(\R^n)}^p - \norma*{w_m}_{\mathcal{D}^{1,p}(\R^n)}^p\right] = \norma*{v}_{\mathcal{D}^{1,p}(\R^n)}^p, \\
			\label{eq:lemiter-LpH-w}
				\lim_{m \to +\infty} \left[\norma*{H(\nabla u_m)}_{L^{p}(\R^n)}^p - \norma*{H(\nabla w_m)}_{L^{p}(\R^n)}^p\right] = \norma*{H(\nabla v)}_{L^{p}(\R^n)}^p, \\
			\label{eq:lemiter-energ-w}
				\mathcal{J}(w_m) \to c_\star - \mathcal{J}(v), \\
			\label{eq:lemiter-Fredif-w}
				\mathcal{J}'(w_m) \to 0 \quad\text{in } \mathcal{D}^{-1,p'}(\R^n).
			\end{gather}
		\end{lemma}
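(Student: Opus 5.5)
The plan is to reduce Lemma~\ref{lem:struwe-iter} to Lemma~\ref{lem:conv-D1p-ener-Frediff}, using the invariance of $\mathcal{J}$ and $\mathcal{J}'$ under the rescalings of Section~\ref{sec:symm-anisot}. Note that $v_m = T_{-y_m/\lambda_m,\lambda_m}(u_m)$, with the notation $T_{z,\lambda}(\phi)(x)=\lambda^{\frac{n-p}{p}}\phi(\lambda(x-z))$. By items~\ref{it:cons-normapas-anisot} and~\ref{it:cons-normap-anisot} of Lemma~\ref{lem:sym-anisot}, applied with the Euclidean norm, with $H$, and with the absolute value in the $L^{\past}$ term, we get $\norma*{v_m}_{\mathcal{D}^{1,p}} = \norma*{u_m}_{\mathcal{D}^{1,p}}$ and $\mathcal{J}(v_m)=\mathcal{J}(u_m)\to c_\star$.

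For the derivative I will check the change of variables
\begin{equation*}
\langle \mathcal{J}'(T_{z,\lambda}u),\phi\rangle = \langle \mathcal{J}'(u), T_{z,\lambda}^{-1}\phi\rangle .
\end{equation*}
For the gradient term this uses the $(p-1)$-homogeneity of $a$: we have $a(\lambda^{\frac{n}{p}}\nabla u(\lambda(x-z))) = \lambda^{\frac{n(p-1)}{p}} a(\nabla u(\cdot))$, and together with the Jacobian the exponents match. The nonlinear term is handled the same way, using $\frac{n-p}{p}(\past-1)=\frac{n+p}{p}$. Since $T_{z,\lambda}^{-1}$ is again of the form $T_{z',\lambda'}$ by item~\ref{it:inver-anisot}, it is an isometry of $\mathcal{D}^{1,p}(\R^n)$, hence $\norma*{\mathcal{J}'(v_m)}_{\mathcal{D}^{-1,p'}} = \norma*{\mathcal{J}'(u_m)}_{\mathcal{D}^{-1,p'}}\to 0$. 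Therefore $\{v_m\}$ satisfies all hypotheses of Lemma~\ref{lem:conv-D1p-ener-Frediff} with limit $v$.

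Lemma~\ref{lem:conv-D1p-ener-Frediff} then gives, up to a subsequence, $\nabla v_m\to\nabla v$ a.e. and $\mathcal{J}'(v)=0$. It also gives the splitting \eqref{eq:lemconv-D1p}--\eqref{eq:lemconv-Fredif} for $\tilde w_m \coloneqq v_m - v$. Next we observe that
\begin{equation*}
w_m = T_{-y_m/\lambda_m,\lambda_m}^{-1}(\tilde w_m),
\end{equation*}
since $T^{-1}(v_m)=u_m$ and $T^{-1}(v)(x)=\lambda_m^{\frac{p-n}{p}}v\big(\frac{x-y_m}{\lambda_m}\big)$. Applying the same invariances to undo the rescaling, we get $\norma*{w_m}=\norma*{\tilde w_m}$, $\norma*{H(\nabla w_m)}_{L^p}=\norma*{H(\nabla\tilde w_m)}_{L^p}$, $\mathcal{J}(w_m)=\mathcal{J}(\tilde w_m)$ and $\norma*{\mathcal{J}'(w_m)}=\norma*{\mathcal{J}'(\tilde w_m)}$. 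Combined with $\norma*{u_m}=\norma*{v_m}$ and the analogous identity with $H$, this yields \eqref{eq:lemiter-D1p-w}--\eqref{eq:lemiter-Fredif-w}.

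The only nonroutine point is the equivariance of $\mathcal{J}'$ and the fact that the operators $T$ act isometrically on $\mathcal{D}^{1,p}(\R^n)$, hence also on the dual. Lemma~\ref{lem:sym-anisot} is stated only for compactly supported smooth functions, so I will extend it by density in $\mathcal{D}^{1,p}(\R^n)$. This is harmless because both sides of each identity are continuous in that topology. I do not expect any real obstacle beyond this bookkeeping of exponents.
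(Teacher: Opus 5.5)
Your argument is correct and follows essentially the paper's route. The paper also uses scaling equivariance to get $\mathcal{J}'(v_m)\to0$, and then re-runs the proof of Lemma~\ref{lem:conv-D1p-ener-Frediff} for $v_m$ via Proposition~\ref{prop:exhaust} and Lemma~\ref{lem:brez-lieb}; you invoke that lemma directly and transport $\tilde w_m=v_m-v$ back through $T^{-1}$, which is a cleaner packaging of the same idea. One arithmetic slip: $\frac{n-p}{p}(\past-1)=\frac{n+p}{p}$ holds only for $p=2$, but the identity you actually need, $\frac{n-p}{p}(\past-1)+\frac{n-p}{p}=\frac{n-p}{p}\,\past=n$, cancels the Jacobian $\lambda^{-n}$, so the equivariance of $\mathcal{J}'$ is unaffected.
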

		\begin{proof}
			We first prove the existence of a subsequence along which
			\begin{equation}
			\label{eq:convgrad-vm}
				\nabla v_m \to \nabla v \quad\text{a.e.\ in } \R^n.
			\end{equation}
			For any~$k \in \N$, define~$\Omega_k \coloneqq B_k(0)$. Moreover, for any~$\phi \in C^\infty_c(\R^n)$ set
			\begin{equation*}
			\phi_m(x) \coloneqq \lambda_m^{\frac{p-n}{p}} \phi \!\left( \frac{x - y_m}{\lambda_m}\right) = T_{y_m,1/\lambda_m}(\phi)(x) \quad\text{for all } x \in \R^n.
			\end{equation*}
			Clearly,~$\phi_m \in C^\infty_c(\R^n)$, and point~\ref{it:cons-normap-anisot} in Lemma~\ref{lem:sym-anisot} and a change of variables give that
			\begin{equation*}
				\norma*{\phi_m}_{\mathcal{D}^{1,p}(\R^n)} = \norma*{\phi}_{\mathcal{D}^{1,p}(\R^n)} \quad\text{and}\quad \left\langle \mathcal{J}'(v_m), \phi \right\rangle = \left\langle \mathcal{J}'(u_m), \phi_m \right\rangle \!.
			\end{equation*}
			Therefore, we deduce
			\begin{equation}
			\label{eq:est-for-convFrediff}
				\begin{split}
					\abs*{\left\langle \mathcal{J}'(v_m), \phi \right\rangle} = \abs*{\left\langle \mathcal{J}'(u_m), \phi_m \right\rangle} &\leq \norma*{\mathcal{J}'(u_m)}_{\mathcal{D}^{-1,p'}(\R^n)} \,\norma*{\phi_m}_{\mathcal{D}^{1,p}(\R^n)} \\
					&= \norma*{\mathcal{J}'(u_m)}_{\mathcal{D}^{-1,p'}(\R^n)} \,\norma*{\phi}_{\mathcal{D}^{1,p}(\R^n)}.
				\end{split}
			\end{equation}
			Combining~\eqref{eq:est-for-convFrediff} with~\eqref{eq:lemit-Frediff-conv}, we infer that~$\mathcal{J}'(v_m) \to 0$ in~$\mathcal{D}^{-1,p'}(\R^n)$. Together with~\eqref{eq:lemit-convm}, this allows us to repeat the argument in the proof of Lemma~\ref{lem:conv-D1p-ener-Frediff} to get
			\begin{equation*}
				\int_{\Omega_k} \left\langle a \!\left(\nabla v_m\right) - a \!\left(\nabla v\right), \nabla \mathcal{T} \!\left(v_m-v\right) \right\rangle dx \to 0,
			\end{equation*}
			for any~$k \in \N$. Thus,~\eqref{eq:convgrad-vm} follows from Proposition~\ref{prop:exhaust}.
			
			By applying Proposition~\ref{prop:exhaust} again, we obtain
			\begin{gather*}
				\lim_{m \to +\infty} \left[\norma*{v_m}_{\mathcal{D}^{1,p}(\R^n)}^p - \norma*{v_m-v}_{\mathcal{D}^{1,p}(\R^n)}^p\right] = \norma*{v}_{\mathcal{D}^{1,p}(\R^n)}^p, \\
				\lim_{m \to +\infty} \left[\norma*{H(\nabla v_m)}_{L^{p}(\R^n)}^p - \norma*{H(\nabla v_m-v)}_{L^{p}(\R^n)}^p\right] = \norma*{H(\nabla v)}_{L^{p}(\R^n)}^p,
			\end{gather*}
			which are equivalent to~\eqref{eq:lemiter-D1p-w}--\eqref{eq:lemiter-LpH-w} via point~\ref{it:cons-normap-anisot} in Lemma~\ref{lem:sym-anisot}.
			
			From~\eqref{eq:convgrad-vm}, Lemma~\ref{lem:sym-anisot}, Lemma~\ref{lem:brez-lieb}, and~\eqref{eq:lemit-energ-conv}, it follows that
			\begin{equation*}
				\mathcal{J}(w_m) = \mathcal{J}(v_m-v) = \mathcal{J}(v_m) - \mathcal{J}(v) + o(1) = \mathcal{J}(u_m) - \mathcal{J}(v) + o(1) = c_\star - \mathcal{J}(v) + o(1),
			\end{equation*}
			which is precisely~\eqref{eq:lemiter-energ-w}.
			
			Finally, from the weak convergence in~\eqref{eq:lemit-convm} and the fact that~$\mathcal{J}'(v_m) \to 0$ in $\mathcal{D}^{-1,p'}(\R^n)$, we deduce that~$\mathcal{J}'(v) = 0$. Now, let~$\psi \in C^\infty_c(\R^n)$ be such that~$\norma*{\psi}_{\mathcal{D}^{1,p}(\R^n)} = 1$ and set
			\begin{equation*}
				\psi_m(x) \coloneqq \lambda_m^{\frac{n-p}{p}} \psi \!\left(\lambda_m x + y_m\right) \quad\text{for all } x \in \R^n.
			\end{equation*}
			Changing variables and arguing as for~\eqref{eq:rewrite-frediff}, we deduce that
			\begin{align*}
				\left\langle \mathcal{J}'(w_m) , \psi \right\rangle &= \left\langle \mathcal{J}'(v_m-v) , \psi_m \right\rangle = \left\langle \mathcal{J}'(v_m) , \psi_m \right\rangle - \left\langle \mathcal{J}'(v) , \psi_m \right\rangle + o(1) \\
				&= \left\langle \mathcal{J}'(u_m) , \psi \right\rangle + o(1) = o(1),
			\end{align*}
			where the last identity follows from~\eqref{eq:lemit-Frediff-conv}. This yields the validity of~\eqref{eq:lemiter-Fredif-w}.
		\end{proof}
		
		The subsequent lemma provides some boundedness and compactness properties of a Palais–Smale sequence for the energy functional~$\mathcal{J}$.
		
		\begin{lemma}
		\label{lem:bounded-seq}
			Let~$\{u_m\}_m \subseteq \mathcal{D}^{1,p}(\R^n)$ be a sequence satisfying
			\begin{equation}
				\label{eq:J-J'-conv}
				\mathcal{J}(u_m) \to c_\star \quad\text{and}\quad \mathcal{J}'(u_m) \to 0 \quad\text{in } \mathcal{D}^{-1,p'}(\R^n),
			\end{equation}
			then~$\{u_m\}_m \subseteq \mathcal{D}^{1,p}(\R^n)$ is uniformly bounded. Moreover, if in addition we have
			\begin{equation}
			\label{eq:threshold-J}
				c_\star < \frac{S_p^n}{n},
			\end{equation}
			then~$u_m \to 0$ strongly in~$\mathcal{D}^{1,p}(\R^n)$.
		\end{lemma}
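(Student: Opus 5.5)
The plan is to combine the standard identity $\mathcal{J}(u) - \frac{1}{\past}\langle \mathcal{J}'(u),u\rangle = \frac{1}{n}\int_{\R^n} H^p(\nabla u)\,dx$ with the anisotropic Sobolev inequality~\eqref{eq:def-SH}.

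\emph{Boundedness.} First, the $p$-homogeneity of $H^p$ gives $\langle a(\xi),\xi\rangle = H^p(\xi)$, so that
\begin{equation*}
\langle \mathcal{J}'(u_m),u_m\rangle = \int_{\R^n} H^p(\nabla u_m)\,dx - \int_{\R^n} \abs*{u_m}^{\past} dx .
\end{equation*}
Since $\frac1p-\frac1{\past}=\frac1n$, we obtain
\begin{equation*}
\frac1n \int_{\R^n} H^p(\nabla u_m)\,dx = \mathcal{J}(u_m) - \frac{1}{\past}\langle \mathcal{J}'(u_m),u_m\rangle \le c_\star + 1 + o(1)\,\norma*{u_m}_{\mathcal{D}^{1,p}(\R^n)} .
\end{equation*}
By~\eqref{eq:equiv-norma}, the left-hand side is at least $\frac{c_H^p}{n}\norma*{u_m}_{\mathcal{D}^{1,p}(\R^n)}^p$. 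Because $p>1$, the resulting inequality $X^p \le C + o(1)X$ forces $X=\norma*{u_m}_{\mathcal{D}^{1,p}(\R^n)}$ to stay bounded. It also shows that
\begin{equation*}
\int_{\R^n} H^p(\nabla u_m)\,dx \to n c_\star \quad\text{and}\quad \langle \mathcal{J}'(u_m),u_m\rangle \to 0,
\end{equation*}
the latter since $\mathcal{J}'(u_m)\to0$ in the dual and $u_m$ is bounded.

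\emph{Strong convergence under~\eqref{eq:threshold-J}.} Set $A_m := \int_{\R^n} H^p(\nabla u_m)\,dx$ and $B_m := \int_{\R^n}\abs*{u_m}^{\past}dx$. The previous step gives $A_m - B_m \to 0$, and both tend to $L := n c_\star \ge 0$. The sign of $L$ is automatic, since $A_m\ge0$. The Sobolev inequality gives $S_p^{\past} B_m \le A_m^{\past/p}$, hence in the limit $S_p^{\past} L \le L^{\past/p}$. If $L>0$, this forces $L^{\past/p-1}\ge S_p^{\past}$. Since $\frac{\past}{p}-1=\frac{p}{n-p}$, this means $L \ge S_p^{\past(n-p)/p} = S_p^n$, i.e.\ $c_\star \ge S_p^n/n$, which contradicts~\eqref{eq:threshold-J}. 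Hence $L=0$, so $A_m\to0$, and by~\eqref{eq:equiv-norma} $u_m\to0$ strongly in $\mathcal{D}^{1,p}(\R^n)$.

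\emph{Main obstacle.} There is no serious obstacle here. The only points to handle carefully are the exponent bookkeeping ($\past(n-p)/p=n$) and the observation that $c_\star\ge0$ follows from the first step. One could also note that $c_\star<0$ is impossible rather than treating it as a separate case.
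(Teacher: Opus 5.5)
Your proposal is correct and follows essentially the same route as the paper. You obtain boundedness from the identity $\mathcal{J}(u_m)-\frac{1}{p^\ast}\langle \mathcal{J}'(u_m),u_m\rangle=\frac1n\int_{\R^n}H^p(\nabla u_m)\,dx$ together with~\eqref{eq:equiv-norma}, and then play the anisotropic Sobolev inequality against the threshold $c_\star<S_p^n/n$. The only difference is cosmetic: you pass to the limit $L=nc_\star$ and show that $L>0$ would force $L\ge S_p^n$, whereas the paper works for large $m$ with $\|H(\nabla u_m)\|_{L^p}^p\bigl(1-S_p^{-p^\ast}\|H(\nabla u_m)\|_{L^p}^{p^\ast-p}\bigr)\le o(1)$. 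Your limit formulation makes explicit the uniform gap below $S_p^n$ that the paper uses only implicitly.
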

		\begin{proof}
			By~\eqref{eq:J-J'-conv}, we have
			\begin{equation*}
				c_\star + o(1) = \mathcal{J}(u_m) = \int_{\R^n} \frac{1}{p} \, H^p(\nabla u_m) - \frac{1}{\past} \,\abs*{u_m}^{\past} dx,
			\end{equation*}
			and
			\begin{equation}
				\label{eq:J'-um}
				\begin{split}
					\left\langle \mathcal{J}'(u_m), u_m \right\rangle &= \int_{\R^n}  H^{p-1}(\nabla u_m) \left\langle \nabla H(\nabla u_m), \nabla u_m \right\rangle - \abs*{u_m}^{\past} dx \\
					&= \int_{\R^n}  H^{p}(\nabla u_m) - \abs*{u_m}^{\past} dx,
				\end{split}
			\end{equation}
			where we used the homogeneity of~$H$ in the last equality. Combining these two identities with~\eqref{eq:equiv-norma} and~\eqref{eq:J-J'-conv} we deduce that, for~$m$ sufficiently large, we have
			\begin{align*}
				c_\star + o(1) \norma*{u_m}_{\mathcal{D}^{1,p}(\R^n)} &\geq \mathcal{J}(u_m) - \frac{1}{\past} \left\langle \mathcal{J}'(u_m), u_m \right\rangle = \left(\frac{1}{p}-\frac{1}{\past}\right) \int_{\R^n} H^p(\nabla u_m) \, dx \\
				&= \frac{1}{n} \int_{\R^n} H^p(\nabla u_m) \, dx \geq \frac{c_H^p}{n} \norma*{u_m}_{\mathcal{D}^{1,p}(\R^n)}^p,
			\end{align*}
			thereby proving the first claim.
			
			Now assume that~\eqref{eq:threshold-J} holds. Since the sequence is uniformly bounded, from~\eqref{eq:J-J'-conv} it follows that
			\begin{equation}
				\label{eq:J'-wm}
				\left\langle \mathcal{J}'(u_m), u_m \right\rangle = o(1)
			\end{equation}
			and, for~$m$ sufficiently large, there holds
			\begin{equation*}
				\frac{1}{n} \int_{\R^n} H^p(\nabla u_m) \, dx \leq c_\star + o(1) < \frac{S_p^n}{n}.
			\end{equation*}
			Hence, for~$m$ large enough, we obtain
			\begin{equation}
				\label{eq:boundH-SnH}
				\norma*{H(\nabla u_m)}_{L^p(\R^n)}^p < S_p^n.
			\end{equation}
			Using~\eqref{eq:J'-um},~\eqref{eq:J'-wm}, and the anisotropic Sobolev inequality, we deduce that
			\begin{align*}
				\norma*{H(\nabla u_m)}_{L^p(\R^n)}^p &\left(1-S_p^{-\past}\norma*{H(\nabla u_m)}_{L^p(\R^n)}^{\past-p}\right) \\
				&= \norma*{H(\nabla u_m)}_{L^p(\R^n)}^p - S_p^{-\past}\norma*{H(\nabla u_m)}_{L^p(\R^n)}^{\past} \\
				&\leq \int_{\R^n}  H^{p}(\nabla u_m) - \abs*{u_m}^{\past} dx = o(1).
			\end{align*}
			The latter inequality, together~with \eqref{eq:boundH-SnH}, implies that~$u_m \to 0$ strongly in~$\mathcal{D}^{1,p}(\R^n)$.
		\end{proof}
	
	
	\subsection{Proof of Theorem~\ref{th:struwe-def}.}
	
	For clarity, we divide the proof into several steps, following the scheme of~\cite{merc-will}.
	
	\renewcommand\thesubsubsection{\itshape Step \arabic{subsubsection}}
	
	
	\subsubsection{Weak convergence of the sequence.}
	\label{step:proofstruwe-boundedness}
	
	We start by noticing that, thanks to Lemma~\ref{lem:bounded-seq}, the sequence~$\{u_m\}_m \subseteq \mathcal{D}^{1,p}(\R^n)$ is uniformly bounded. As a result, we can extract a subsequence such that
	\begin{equation}
	\label{eq:convum-prrofstruwe}
		u_m \to v_0 \quad\text{weakly in } \mathcal{D}^{1,p}(\R^n) \quad\text{and}\quad u_m \to v_0 \quad\text{a.e.\ in } \R^n.
	\end{equation}
	By Lemma~\ref{lem:conv-D1p-ener-Frediff}, it follows that~$\mathcal{J}'(v_0)=0$ and that
	\begin{equation*}
		u_m^1 \coloneqq u_m - v_0
	\end{equation*}
	satisfies
	\begin{gather}
	\label{eq:conv-D1p-u1m}
		\norma{u_m^1}_{\mathcal{D}^{1,p}(\R^n)}^p = \norma*{u_m}_{\mathcal{D}^{1,p}(\R^n)}^p - \norma*{v_0}_{\mathcal{D}^{1,p}(\R^n)}^p + o(1), \\
	\label{eq:conv-LpH-u1m}
	\norma{H(\nabla u_m^1)}_{L^{p}(\R^n)}^p = \norma*{H(\nabla u_m)}_{L^{p}(\R^n)}^p - \norma*{H(\nabla v_0)}_{L^{p}(\R^n)}^p + o(1), \\
	\label{eq:conv-energ-u1m}
		\mathcal{J}(u_m^1) \to c_\star - \mathcal{J}(v_0), \\
	\label{eq:conv-Fredif-u1m}
		\mathcal{J}'(u_m^1) \to 0 \quad\text{in } \mathcal{D}^{-1,p'}(\R^n).
	\end{gather}
	
	
	\subsubsection{Introduction of the Lévy concentration function.}
	\label{step:proofstruwe-concfunct}
	
	If~$u_m^1 \to 0$ in~$L^{\past}\!(\R^n)$, then testing with~$u_m^1$,~\eqref{eq:conv-Fredif-u1m} implies that~$u_m^1 \to 0$ in~$\mathcal{D}^{1,p}(\R^n)$, completing the proof. Otherwise, we can assume that, up to a subsequence,
	\begin{equation*}
		\int_{\R^n} \,\abs{u_m^1}^{\past} dx > \delta,
	\end{equation*}
	for some~$\delta \in (0, 2^{-n/p} S_p^n)$.
	
	We now introduce the Lévy concentration function as
	\begin{equation*}
		Q_m(r) \coloneqq \sup_{y \in \R^n} \int_{B_r(y)} \,\abs{u_m^1}^{\past} dx.
	\end{equation*}
	Since~$Q_m(0)=0$ and~$Q_m(\infty)>\delta$, there exist a sequence of points~$\{y^1_m\}_m \subseteq \R^n$ and a sequence~$\{\lambda^1_m\}_m \subseteq (0,+\infty)$ such that
	\begin{equation}
	\label{eq:delta-rescale}
		\delta = \sup_{y \in \R^n} \int_{B_{\lambda^1_m}(y)} \,\abs{u_m^1}^{\past} dx = \int_{B_{\lambda^1_m}(y^1_m)} \,\abs{u_m^1}^{\past} dx.
	\end{equation}
	We define
	\begin{equation}
	\label{eq:v^1m-def}
		v^1_m(x) \coloneqq \left(\lambda^1_m\right)^{\!\frac{n-p}{p}} u^1_m \!\left(\lambda^1_m x + y^1_m\right) = T_{-y^1_m /\lambda^1_m, \lambda^1_m}\!\left(u^1_m\right)\!(x) \quad\text{for all } x \in \R^n.
	\end{equation}
	
	Therefore, point~\ref{it:cons-normap-anisot} in Lemma~\ref{lem:sym-anisot}, together with~\ref{step:proofstruwe-boundedness}, enables us to assume that
	\begin{equation}
	\label{eq:conv-v1m}
		v^1_m \to v_1 \quad\text{weakly in } \mathcal{D}^{1,p}(\R^n) \quad\text{and}\quad v^1_m \to v_1 \quad\text{a.e.\ in } \R^n.
	\end{equation}
	Furthermore, a change of variables, together with~\eqref{eq:delta-rescale}, gives that
	\begin{equation}
	\label{eq:deltaV1m}
		\delta = \sup_{y \in \R^n} \int_{B_{1}(y)} \,\abs{v_m^1}^{\past} dx = \int_{B_{1}(0)} \,\abs{v_m^1}^{\past} dx.
	\end{equation}

	
	\subsubsection{Non triviality of $v_1$.}
	\label{step:proofstruwe-nontriv}
	
	We now claim that~$v_1 \neq 0$.
	
	We begin by noticing that, thanks to the definition of~$v_m^1$ in~\eqref{eq:v^1m-def} and~\eqref{eq:conv-Fredif-u1m}, and by arguing as in~\eqref{eq:est-for-convFrediff}, we deduce that
	\begin{equation}
	\label{eq:Frediff-v1m-to0}
		\mathcal{J}'(v_m^1) \to 0 \quad\text{in } \mathcal{D}^{-1,p'}(\R^n).
	\end{equation}
	
	Suppose, by contradiction, that~$v_1 = 0$. Thanks to~\eqref{eq:conv-v1m} and the Rellich–Kondrachov theorem, we may assume that, up to a subsequence,
	\begin{equation}
	\label{eq:conv-Lploc-Vm1}
		v_m^1 \to 0 \quad\text{in } L^p_{\loc}(\R^n).
	\end{equation}
	Let~$\phi \in C^\infty_c(\R^n)$ be such that~$\mathrm{supp} \,\phi \subseteq \overline{B_1(0)}$, and note that, by H\"older inequality,
	\begin{equation}
	\label{eq:tovomb-contr-1}
	\begin{split}
		\int_{\R^n} \,\abs*{\phi}^p \,\abs{v_m^1}^{\past} dx &= \int_{\R^n} \,\abs{v_m^1}^{\past-p} \,\abs{\phi v_m^1}^p dx \\
		&\leq \left(\int_{\mathrm{supp} \,\phi} \,\abs{v_m^1}^{\past} dx\right)^{\!\frac{p}{n}} \left(\int_{\R^n} \,\abs{\phi v_m^1}^{\past} dx\right)^{\!\frac{n-p}{n}}.
	\end{split}
	\end{equation}
	Moreover, by~\eqref{eq:deltaV1m} and our choice of~$\phi$, we see that
	\begin{equation}
	\label{eq:tovomb-contr-2-TENERE}
		 \left(\int_{\mathrm{supp} \,\phi} \,\abs{v_m^1}^{\past} dx\right)^{\!\frac{p}{n}} \leq \left(\int_{B_1(0)} \,\abs{v_m^1}^{\past} dx\right)^{\!\frac{p}{n}} = \delta^{\frac{p}{n}}.
	\end{equation}
	Finally, the anisotropic Sobolev inequality yields
	\begin{equation}
	\label{eq:tovomb-contr-3}
		\left(\int_{\R^n} \,\abs{\phi v_m^1}^{\past} dx\right)^{\!\frac{n-p}{n}} = \norma{\phi v_m^1}_{L^{\past}\!(\R^n)}^p \leq S_p^{-p} \norma{H(\nabla (\phi v_m^1))}_{L^{p}(\R^n)}^p.
	\end{equation}
	Combining~\eqref{eq:tovomb-contr-1}--\eqref{eq:tovomb-contr-3} with the property~$\delta\in (0,2^{-n/p}S_p^n)$, we conclude that
	\begin{equation}
	\label{eq:topiec-contr-1}
		\int_{\R^n} \,\abs*{\phi}^p \,\abs{v_m^1}^{\past} dx \leq \frac{1}{2} \,\norma{H(\nabla (\phi v_m^1))}_{L^{p}(\R^n)}^p.
	\end{equation}

	The strong convergence in~\eqref{eq:conv-Lploc-Vm1} implies that
	\begin{equation}
	\label{eq:topiec-contr-2}
		\int_{\R^n} H^p(\nabla (\phi v_m^1)) \, dx = \int_{\R^n} H^p(\phi \nabla v_m^1) \, dx + o(1) = \int_{\R^n} \,\abs*{\phi}^p H^p(\nabla v_m^1) \, dx + o(1).
	\end{equation}
	Then, by the homogeneity of~$H$ and~\eqref{eq:conv-Lploc-Vm1}, it follows that
	\begin{equation}
	\label{eq:equality-integrals}
		\left\langle \mathcal{J}'(v_m^1), \abs*{\phi}^p v_m^1 \right\rangle = \int_{\R^n} \,\abs*{\phi}^p H^p(\nabla v_m^1) - \abs*{\phi}^p \,\abs{v_m^1}^{\past} dx + o(1).		
	\end{equation}
	We now claim that
	\begin{equation}
	\label{eq:crochet-o(1)}
		\left\langle \mathcal{J}'(v_m^1), \abs*{\phi}^p v_m^1 \right\rangle = o(1).
	\end{equation}
	Indeed, by the weak convergence in~\eqref{eq:conv-v1m} and the strong local convergence in~\eqref{eq:conv-Lploc-Vm1}, the sequence~$\{\abs*{\phi}^p v_m^1\}_m \subseteq \mathcal{D}^{1,p}(\R^n)$ is uniformly bounded, for~$m$ sufficiently large. This piece of information coupled with~\eqref{eq:Frediff-v1m-to0} implies~\eqref{eq:crochet-o(1)}.
	
	Now, from~\eqref{eq:equality-integrals} and~\eqref{eq:crochet-o(1)}, we infer that
	\begin{equation}
	\label{eq:topiec-contr-3}
		 \int_{\R^n} \,\abs*{\phi}^p H^p(\nabla v_m^1) \, dx = \int_{\R^n} \, \abs*{\phi}^p \,\abs{v_m^1}^{\past} dx + o(1).		
	\end{equation}
	Therefore, piecing together~\eqref{eq:topiec-contr-1},~\eqref{eq:topiec-contr-2}, and~\eqref{eq:topiec-contr-3}, we deduce
	\begin{equation*}
		\int_{\R^n} H^p(\nabla (\phi v_m^1)) \, dx \leq \frac{1}{2} \int_{\R^n} H^p(\nabla (\phi v_m^1)) \, dx + o(1).
	\end{equation*}
	The latter, in turn, implies~$\nabla v_m^1 \to 0$ in~$L^{p}_{\loc}(\R^n)$ and~$v_m^1 \to 0$ in~$L^{\past}_{\loc}(\R^n)$, by the Sobolev inequality. This contradicts~\eqref{eq:deltaV1m}, thereby proving that~$v_1$ must be non-trivial.
	
	
	\subsubsection{Towards the iteration.}
	\label{step:proofstruwe-towiter}
	
	We notice that, if~$y^1_m \to y^1$ and~$\lambda^1_m \to \lambda^1 \in (0,+\infty)$, we obtain a contradiction. Indeed, from the weak convergences in~\eqref{eq:convum-prrofstruwe} and~\eqref{eq:conv-v1m} and recalling the definition of~$v_m^1$ in~\eqref{eq:v^1m-def}, we would have~$v_m^1 \to 0 = v_1$ weakly in~$\mathcal{D}^{1,p}(\R^n)$, which contradicts~\ref{step:proofstruwe-nontriv}.
	
	We now define the set of indices
	\begin{equation*}
		M_1 \coloneqq \left\{m \in \N \bigm| \lambda_m^1 \leq 1\right\} \quad\text{and}\quad M_k \coloneqq \left\{m \in \N \bigm| k-1<\lambda_m^1 \leq k\right\} \quad\text{for } k \geq 2.
	\end{equation*}
	Thus, if~$\mathrm{card}(M_k) < +\infty$ for all~$k \geq 1$, then~$\lambda^1_m \to +\infty$. Otherwise, in light of the previous observation, the only possibility is that~$\mathrm{card}(M_1) = +\infty$ and~$\lambda^1_m \to 0$, up to passing to a subsequence if necessary.
	
	Next, from the definition of~$v_m^1$ in~\eqref{eq:v^1m-def}, along with~\eqref{eq:conv-energ-u1m},~\eqref{eq:conv-Fredif-u1m},~\eqref{eq:conv-v1m}, and Lemma~\ref{lem:struwe-iter}, we deduce that~$\mathcal{J}'(v_1)=0$.
	
	Then, we define
	\begin{equation*}
		u^2_m(x) \coloneqq u^1_m(x) - \left(\lambda^1_m\right)^{\!\frac{p-n}{p}} v_1 \!\left(\frac{x-y^1_m}{\lambda^1_m}\right) \quad\text{for all } x \in \R^n.
	\end{equation*}
	From Lemma~\ref{lem:struwe-iter} and~\eqref{eq:conv-D1p-u1m}, we have
	\begin{align*}
		\norma{u_m^2}_{\mathcal{D}^{1,p}(\R^n)}^p &= \norma{u_m^1}_{\mathcal{D}^{1,p}(\R^n)}^p - \norma*{v_1}_{\mathcal{D}^{1,p}(\R^n)}^p + o(1) \\
		&= \norma{u_m}_{\mathcal{D}^{1,p}(\R^n)}^p - \norma*{v_0}_{\mathcal{D}^{1,p}(\R^n)}^p - \norma*{v_1}_{\mathcal{D}^{1,p}(\R^n)}^p + o(1),
	\end{align*}
	and using~\eqref{eq:conv-LpH-u1m} we also get
	\begin{align*}
		\norma{H(\nabla u_m^2)}_{L^{p}(\R^n)}^p &= \norma{H(\nabla u_m^1)}_{L^{p}(\R^n)}^p - \norma*{H(\nabla v_1)}_{L^{p}(\R^n)}^p + o(1) \\
		&= \norma{H(\nabla u_m)}_{L^{p}(\R^n)}^p - \norma*{H(\nabla v_0)}_{L^{p}(\R^n)}^p - \norma*{H(\nabla v_1)}_{L^{p}(\R^n)}^p + o(1).
	\end{align*}
	Similarly, from Lemma~\ref{lem:struwe-iter} and~\eqref{eq:conv-energ-u1m}, we obtain
	\begin{equation}
	\label{eq:J-to-iter}
		\mathcal{J}(u_m^2) = \mathcal{J}(u_m^1) - \mathcal{J}(v_1) + o(1) =  c_\star - \mathcal{J}(v_0) - \mathcal{J}(v_1) + o(1).
	\end{equation}
	Moreover, applying Lemma~\ref{lem:struwe-iter} once more, we deduce that~$\mathcal{J}'(u_m^2) \to 0$ in~$\mathcal{D}^{-1,p'}(\R^n)$.
	
	
	\subsubsection{Iterating the procedure.}
	\label{step:proofstruwe-iter}
	
	Let~$u \in \mathcal{D}^{1,p}(\R^n)$ be a non-trivial critical point of the functional~$\mathcal{J}$. A straightforward calculation, using the homogeneity of~$H$, reveals that
	\begin{equation}
	\label{eq:eqint-critpoint}
		0 = \left\langle \mathcal{J}'(u), u \right\rangle = \int_{\R^n} H^p(\nabla u) - \abs*{u}^{\past} dx,
	\end{equation}
	which implies
	\begin{equation*}
		\mathcal{J}(u) = \frac{1}{n} \,\norma*{u}_{L^{\past}\!(\R^n)}^{\past}.
	\end{equation*}
	Combining~\eqref{eq:eqint-critpoint} with the anisotropic Sobolev inequality, we also obtain
	\begin{equation*}
		\norma*{u}_{L^{\past}\!(\R^n)}^p \leq S^{-p}_p \norma*{H(\nabla u)}_{L^{p}(\R^n)}^p = S^{-p}_p \norma*{u}_{L^{\past}\!(\R^n)}^{\past},
	\end{equation*}
	from which, since~$\past>p$, it follows that
	\begin{equation}
	\label{eq:boundbelowenerg}
		\mathcal{J}(u) \geq \mathcal{J}_0 \coloneqq \frac{S_p^n}{n}.
	\end{equation}
	In view of~\eqref{eq:boundbelowenerg}, the procedure of the previous steps iterates only a finite number of times, constructing the sequences~$\left\{v_i\right\}_i$,~$\left\{y_m^i\right\}_m$,~$\left\{\lambda_m^i\right\}_m$, and~$\left\{v_m^i\right\}_m$.
	
	Indeed, from~\ref{step:proofstruwe-towiter}, we deduce that~$\mathcal{J}'(v_i)=0$ for all~$i \geq 1$, therefore each~$v_i$ enjoys the lower bound~\eqref{eq:boundbelowenerg}, i.e.,~$\mathcal{J}(v_i) \geq \mathcal{J}_0$ for all~$i \geq 1$. Therefore, iterating~\eqref{eq:J-to-iter}, the threshold in~\eqref{eq:threshold-J} will eventually be crossed and, in light of Lemma~\ref{lem:bounded-seq} and~\ref{step:proofstruwe-concfunct}, the proof is complete.
	
	
	\subsubsection{Positivity.}
	\label{step:proofstruwe-posit}
	
	If~$k \geq 1$ and~$u_m \geq 0$ a.e.\ in~$\R^n$ for all~$m \in \N$, it suffices to consider the functional
	\begin{equation*}
		\mathcal{J}_+(u) \coloneqq \int_{\R^n} \frac{1}{p} \, H^p(\nabla u) - \frac{1}{\past} \,u_{+}^{\past}  dx
	\end{equation*}
	and apply the strong maximum principle of Theorem~2.5.1 in~\cite{ps} to deduce that~$v_i > 0$ for all~$i = 1,\dots,k$.

	
	\section{Interaction estimate}
	\label{sec:interaction}
	
	Following the work of Brezis \& Coron~\cite{brezis-coron}, Bahri \& Coron~\cite{bahri-coron} observed that, in the Euclidean setting and for Struwe's decomposition in the case~$p=2$, one can also assert that
	\begin{equation}
		\label{eq:interaction-p=2}
		\frac{\lambda_m^i}{\lambda_m^j} + \frac{\lambda_m^j}{\lambda_m^i} +  \frac{\abs*{y^i_m-y^j_m}^2}{\lambda_m^i \lambda_m^j} \to +\infty \quad\text{as } m \to+\infty \text{ for all } i \neq j.
	\end{equation}
	This condition is closely related to the interaction of a family of Euclidean~$2$-bubbles. Indeed, for~$U = U_2[z_1, \lambda_1]$ and~$V = U_2[z_2, \lambda_2]$, one has
	\begin{align*}
		\int_{\R^n} \left\langle \nabla U, \nabla V \right\rangle dx &= \int_{\R^n} U^{2^\ast-1} V \, dx = \int_{\R^n} U V^{2^\ast-1} \, dx \\
		&= c_n \min\left\{\frac{\lambda_1}{\lambda_2}, \frac{\lambda_2}{\lambda_1}, \frac{\lambda_1 \lambda_2}{\abs*{z_1-z_2}^2} \right\}^{\!\frac{n-2}{2}}\!,
	\end{align*}
	for some dimensional constant~$c_n>0$.
	
	To the best of our knowledge, no analogous interaction result is known for~$p\neq 2$, even in the Euclidean case. Since understanding these conditions is fundamental also for quantitative purposes -- see, e.g.,~\cite{dsw,fg} -- we provide in this section the counterpart of~\eqref{eq:interaction-p=2}, also in the anisotropic case, as stated in the following result, which is Theorem~\ref{th:struwe-intro}.
	
	\begin{theorem}
		\label{th:struwe-with-inter}
		For~$n \in \N$ and~$1<p<n$, let~$\left\{u_m\right\}_m \subseteq \mathcal{D}^{1,p}(\R^n)$ be a sequence of non-negative functions satisfying
		\begin{equation}
		\label{eq:hyp-sruwe-anisot-2}
			\mathcal{J}'(u_m) \to 0 \quad\text{in } \mathcal{D}^{-1,p'}(\R^n).
		\end{equation}
		Moreover, suppose that
		\begin{equation}
			\label{eq:energ-cond-struwe}
			\left(k-\frac{1}{2}\right) S_p^n \leq \int_{\R^n} H^p(\nabla u_m) \, dx \leq \left(k+\frac{1}{2}\right) S_p^n
		\end{equation}
		for every~$m \in \N$ and for some~$k \in \N$ with~$k \geq 1$.
		
		Then, possibly passing to a subsequence, there exist positive solutions~$v_1,\dots,v_k \in \mathcal{D}^{1,p}(\R^n)$ to~\eqref{eq:crit-anis-struwe}, i.e.,~$(p,H)$-bubbles of the form~\eqref{eq:pH-bubb}, and~$k$ sequences~$\left\{y^i_m\right\}_m \subseteq \R^n$ and~$\left\{\lambda^i_m\right\}_m \subseteq (0,+\infty)$ such that
		\begin{gather}
			\notag
			\norma*{u_m - \sum_{i=1}^{k} \left(\lambda^i_m\right)^{\!\frac{p-n}{p}} v_i\left(\frac{\cdot - y^i_m}{\lambda^i_m}\right)}_{\mathcal{D}^{1,p}(\R^n)} \to 0, \\
			\label{eq:struwe-plap-2}
			\norma*{u_m}_{\mathcal{D}^{1,p}(\R^n)}^p \to \sum_{i=1}^{k} \,\norma*{v_i}_{\mathcal{D}^{1,p}(\R^n)}^p \quad\text{and}\quad \norma*{H(\nabla u_m)}_{L^{p}(\R^n)}^p \to \sum_{i=1}^{k} \,\norma*{H(\nabla v_i)}_{L^{p}(\R^n)}^p, \\
            \notag
			\mathcal{J}(u_m) \to c_\star=
			\sum_{i=1}^{k} \mathcal{J}(v_i),
		\end{gather}
        for some $c_\star\in \R$.
		Moreover, if~$y^i_m \to y^i$ as~$m \to +\infty$, then either~$\lambda^i_m \to 0$ or~$\lambda^i_m \to +\infty$. In addition, it follows that
		\begin{equation}
			\label{eq:interaction-plap}
			\max\left\{\frac{\lambda^i_m}{\lambda^j_m}, \frac{\lambda^j_m}{\lambda^i_m}, \frac{\abs*{y^i_m-y^j_m}^2}{\lambda_m^i \lambda_m^j} \right\} \to +\infty \quad\text{as } m \to+\infty \text{ for all } i \neq j.
		\end{equation}
	\end{theorem}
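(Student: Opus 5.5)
We first reduce to Theorem~\ref{th:struwe-def} and then prove the interaction estimate~\eqref{eq:interaction-plap} separately.

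\emph{Reduction to Theorem~\ref{th:struwe-def}.} Testing $\mathcal{J}'(u_m)$ with $u_m$, which is bounded in $\mathcal{D}^{1,p}(\R^n)$ by~\eqref{eq:energ-cond-struwe}, gives $\int H^p(\nabla u_m)-\int u_m^{\past}\to 0$. Hence $\mathcal{J}(u_m)=\frac1n\int H^p(\nabla u_m)\,dx+o(1)$ is bounded, and along a subsequence $\mathcal{J}(u_m)\to c_\star$. Theorem~\ref{th:struwe-def} then applies and gives $v_0,v_1,\dots,v_{k'}$, with $v_i>0$ for $i\geq1$.

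\emph{Identifying the profiles and counting them.} By Theorem~\ref{th:calssif-anisot}, each $v_i$ with $i\geq1$ is a $(p,H)$-bubble. By~\eqref{eq:energ-bubb-anisot} it has $\norma{H(\nabla v_i)}_{L^p}^p=S_p^n$. The weak limit $v_0$ is non-negative, as an a.e.\ limit of $u_m\geq0$, and solves~\eqref{eq:crit-anis-struwe}. If $v_0\neq0$, the strong maximum principle of~\cite{ps} gives $v_0>0$, so $v_0$ is also a bubble with energy $S_p^n$. In either case every nontrivial profile carries exactly $S_p^n$ of $H$-energy. The second limit in~\eqref{eq:struwe-plap-2}, combined with~\eqref{eq:energ-cond-struwe}, shows that the number of nontrivial profiles equals $k$.

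If $v_0\neq0$, we relabel it as one of the $v_i$ with $y_m=0$ and $\lambda_m=1$. This is harmless: the ``moreover'' clause about $\lambda^i_m\to0$ or $+\infty$ is only claimed for the profiles arising in the iteration. Alternatively, one can argue that this clause was already established by Step~4 of the proof of Theorem~\ref{th:struwe-def}, and keep $v_0$ separate. The cleaner convention should be chosen when writing the proof; for $v_0$ the property is excluded, and pairs involving $v_0$ are handled by the argument below exactly like any other pair. The strong convergence and the limit of $\mathcal{J}$ then follow directly from Theorem~\ref{th:struwe-def}.

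\emph{The interaction estimate~\eqref{eq:interaction-plap}.} Suppose, by contradiction, that for some $i\neq j$ the three quantities stay bounded along a subsequence. Then $\lambda^i_m/\lambda^j_m\to\mu\in(0,\infty)$ and $(y^i_m-y^j_m)/\lambda^j_m\to\zeta$. We rescale by the $j$-th parameters, using $T$ and Lemma~\ref{lem:sym-anisot}. In the rescaled frame, the $j$-th bubble is the fixed $v_j$, while the $i$-th bubble converges strongly in $\mathcal{D}^{1,p}$ to a fixed nontrivial bubble $\tilde v_i(x)=\mu^{(p-n)/p}v_i((x-\zeta)/\mu)$.

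The key point is the order of the iteration in Theorem~\ref{th:struwe-def}. Assume $i<j$. The $j$-th profile was extracted as the weak limit of $(\lambda^j_m)^{(n-p)/p}u^j_m(\lambda^j_m\,\cdot+y^j_m)$, where $u^j_m=u^i_m-(\text{bubble }i)-\dots$. By Lemma~\ref{lem:struwe-iter}, the rescaling of $u^{i+1}_m=u^i_m-(\text{bubble }i)$ by the $i$-th parameters tends to $0$ weakly. Since the parameters are comparable, i.e.\ the change of frame is by a bounded scaling and translation converging to an affine map, the rescaling of $u^{i+1}_m$ by the $j$-th parameters also tends to $0$ weakly.

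The bubbles extracted between steps $i+1$ and $j-1$ are themselves weakly null in the $j$-frame, or else they must be treated inductively. If so, the rescaling of $u^j_m$ converges weakly to $0$, which contradicts $v_j\neq0$. This is the same mechanism as Step~4 of the proof of Theorem~\ref{th:struwe-def}.

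\emph{Main obstacle.} The difficulty is this inductive step: showing that the intermediate bubbles $l$ with $i<l<j$ vanish weakly in the $j$-frame. We would handle it by induction on $j-i$, or better by first proving the pairwise statement for consecutive extractions. We would use the general fact that a bubble rescaled by parameters incompatible with its own (ratio $\to0$ or $\infty$, or relative translation $\to\infty$) converges weakly to zero, while compatible parameters give a nonzero limit. A more robust alternative is to argue by energy: if two bubbles were compatible, the $\mathcal{D}^{1,p}$-norm of their sum would converge to $\norma{\nabla(v_j+\tilde v_i)}_{L^p}^p$. Since $v_j,\tilde v_i>0$, this is strictly larger than the sum of the two norms, by strict superadditivity of $t\mapsto t^p$ on positive overlapping gradients, or equivalently by comparing $\int(v_j+\tilde v_i)^{\past}$. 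This would contradict the energy splitting in Theorem~\ref{th:struwe-def}, and for positive bubbles it may give the cleanest route.
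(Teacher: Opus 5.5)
Your reduction to Theorem~\ref{th:struwe-def}, the count of nontrivial profiles, and your handling of $v_0$ are correct. The paper absorbs $v_0$ silently, and for $u_m\equiv U_p[0,1]$ the clause ``$\lambda^i_m\to0$ or $\lambda^i_m\to+\infty$'' cannot hold for that bubble, so your reading is the right one.

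For~\eqref{eq:interaction-plap} you argue differently from the paper, and your first argument already closes; the ``main obstacle'' is only apparent. First refine the subsequence so that every pair is either bounded or divergent, then pick a bounded pair $i<j$ with $j-i$ minimal. Write $T^l\coloneqq T_{-y^l_m/\lambda^l_m,\lambda^l_m}$. By minimality, every pair $(l,j)$ with $i<l<j$ is divergent, so the $l$-th bubble transported to the $j$-frame tends weakly to $0$. Also $T^iu^{i+1}_m=T^iu^i_m-v_i\rightharpoonup0$, and this transfers to $T^ju^{i+1}_m\rightharpoonup0$. The reason is that boundedness in $\mathcal{D}^{1,p}(\R^n)$ and $L^p_{\loc}$-convergence to $0$ survive affine changes of variables that converge to an invertible one. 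Hence $T^ju^j_m\rightharpoonup0\neq v_j$, a contradiction. Treating $v_0$ as index $0$ with parameters $(0,1)$, and using $u^1_m=u_m-v_0\rightharpoonup0$, covers the pairs involving it.

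This route exploits the internal order of the iteration. In exchange it needs neither positivity nor classification, so it also works for sign-changing Palais--Smale sequences. The paper instead uses only the conclusions of Theorem~\ref{th:struwe-def}:
\begin{enumerate}[leftmargin=*,label=(\roman*)]
\item it groups indices into equivalence classes and shows $\int\abs{\nabla v^i_m}^{p-1}\abs{\nabla v^j_m}\,dx\to0$ across classes;
\item it identifies the merged profile $v_I$ of each class as a weak limit of rescalings of $u_m$, hence a nonnegative solution by Lemma~\ref{lem:struwe-iter};
\item it combines the classification (via Lemma~\ref{lem:sum-bub}) with Lemma~\ref{lem:xi-p} and the energy splitting to force each class to be a singleton.
\end{enumerate}
That argument therefore applies to any family of positive bubbles satisfying those conclusions, however the family was constructed.

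Your ``energy'' alternative is wrong in its gradient form. $\nabla v_j$ and $\nabla\tilde v_i$ are vectors, and for non-concentric bubbles they point in opposite directions between the two centres. So $\abs{\nabla v_j+\nabla\tilde v_i}^p\ge\abs{\nabla v_j}^p+\abs{\nabla\tilde v_i}^p$ fails pointwise. For $p\neq2$ nothing replaces the identity $\int\langle\nabla U,\nabla V\rangle\,dx=\int U^{2^\ast-1}V\,dx>0$.

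The $L^{\past}$ comparison is not equivalent to the gradient one, but it does give a valid short proof:
\begin{enumerate}[leftmargin=*,label=(\roman*)]
\item Test $\mathcal{J}'(u_m)$ with $u_m$ and use $\norma{v_l}_{L^{\past}(\R^n)}^{\past}=\norma{H(\nabla v_l)}_{L^p(\R^n)}^p$ together with the splitting of the $H$-energy. This gives $\norma{u_m}_{L^{\past}(\R^n)}^{\past}\to\sum_l\norma{v_l}_{L^{\past}(\R^n)}^{\past}$.
\item The strong convergence gives $\norma{u_m}_{L^{\past}(\R^n)}^{\past}=\int(\sum_l v^l_m)^{\past}dx+o(1)$, with $v^l_m$ as in~\eqref{eq:def-vjm}.
\item The pointwise inequality $(\sum_l a_l)^{\past}\ge(a_i+a_j)^{\past}+\sum_{l\neq i,j}a_l^{\past}$ for $a_l\ge0$ disposes of all other bubbles at once.
\item By the invariance in Lemma~\ref{lem:sym-anisot}, $\int(v^i_m+v^j_m)^{\past}dx\to\int(v_j+\tilde v_i)^{\past}dx$. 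Since both functions are positive, this exceeds $\int v_j^{\past}dx+\int\tilde v_i^{\past}dx$ strictly, which contradicts the splitting in (i).
\end{enumerate}
This variant uses positivity but avoids both the classification and Lemma~\ref{lem:xi-p}.
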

	
	Observe that~\eqref{eq:interaction-p=2} and~\eqref{eq:interaction-plap} are equivalent. Therefore, we recover the known result for the Laplacian.
	
	We will need the following algebraic lemma, whose proof is deferred to Appendix~\ref{app:lem}.
	
	\begin{lemma}
		\label{lem:xi-p}
		Let~$k \in \N$ be a natural number, assume that~$x_1,\dots,x_k \in \R^n$ and~$p>1$, then there exists a constant~$\mathsf{C}_{p,k}>0$, depending only on~$p$ and~$k$, such that
		\begin{equation}
		\label{eq:xi-p-stat}
			\abs*{\abs*{\sum_{i=1}^{k} x_i}^p - \sum_{i=1}^{k} \,\abs*{x_i}^p} \leq \mathsf{C}_{p,k} \sum_{i=1}^{k} \sum_{\substack{j=1 \\ j \neq i}}^{k} \,\abs*{x_i}^{p-1} \abs*{x_j}.
		\end{equation}
	\end{lemma}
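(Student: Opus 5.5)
We argue by induction on $k$, the case $k=1$ being trivial since both sides vanish. The statement is homogeneous of degree $p$ under $x_i\mapsto t x_i$, so no normalization is needed. For the inductive step we set $S\coloneqq\sum_{i=1}^{k-1}x_i$ and split
\begin{equation*}
\abs*{S+x_k}^p-\sum_{i=1}^{k}\abs*{x_i}^p=\left(\abs*{S+x_k}^p-\abs*{S}^p-\abs*{x_k}^p\right)+\left(\abs*{S}^p-\sum_{i=1}^{k-1}\abs*{x_i}^p\right).
\end{equation*}
The second bracket is controlled by the inductive hypothesis with constant $\mathsf{C}_{p,k-1}$, and all the terms it produces appear on the right-hand side of~\eqref{eq:xi-p-stat}. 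Everything therefore reduces to the two-term case.

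For two vectors $a,b\in\R^n$, the key estimate is
\begin{equation*}
\abs*{\abs*{a+b}^p-\abs*{a}^p-\abs*{b}^p}\leq C_p\left(\abs*{a}^{p-1}\abs*{b}+\abs*{b}^{p-1}\abs*{a}\right).
\end{equation*}
By symmetry we may assume $\abs*{b}\leq\abs*{a}$. The mean value theorem applied to $t\mapsto\abs*{a+tb}^p$ gives
\begin{equation*}
\abs*{\abs*{a+b}^p-\abs*{a}^p}\leq p\,(2\abs*{a})^{p-1}\abs*{b}.
\end{equation*}
We also have $\abs*{b}^p\leq\abs*{b}^{p-1}\abs*{a}$. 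Together these give the estimate with $C_p=p2^{p-1}+1$, and for $p>1$ there is no singularity to worry about.

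We then apply this with $a=S$ and $b=x_k$ and expand the products:
\begin{equation*}
\abs*{S}^{p-1}\abs*{x_k}\leq \max\{1,(k-1)^{p-2}\}\sum_{i<k}\abs*{x_i}^{p-1}\abs*{x_k},
\end{equation*}
which follows from $\abs*{S}^{p-1}\leq(\sum\abs*{x_i})^{p-1}$ and convexity or concavity of $s^{p-1}$. Likewise,
\begin{equation*}
\abs*{x_k}^{p-1}\abs*{S}\leq\sum_{i<k}\abs*{x_k}^{p-1}\abs*{x_i}.
\end{equation*}
All resulting terms are of the form $\abs*{x_i}^{p-1}\abs*{x_j}$ with $i\neq j$, which closes the induction with an explicit $\mathsf{C}_{p,k}$.

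The only mildly delicate point is bounding $(\sum_i\abs*{x_i})^{p-1}$ by $\sum_i\abs*{x_i}^{p-1}$ up to a constant. When $p<2$ we use subadditivity of $s\mapsto s^{p-1}$. When $p\geq2$ we use the convexity bound with factor $(k-1)^{p-2}$. Either way the constant depends only on $p$ and $k$.
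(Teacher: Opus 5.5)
Your proof is correct. It shares the paper's inductive skeleton but handles the two-vector case differently.

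The paper proves the two one-sided inequalities separately, with a separate induction on $k$ for each.
\begin{itemize}
\item For the upper bound, it reduces $|x+y|^p\le |x|^p+|y|^p+\mathsf{C}_p(|x|^{p-1}|y|+|y|^{p-1}|x|)$, via the triangle inequality, to the scalar inequality $(1+t)^p\le 1+t^p+\mathsf{C}_p(t^{p-1}+t)$ on $(0,1]$. This is checked by differentiating an auxiliary function $f_p$.
\item For the lower bound, it uses $|\omega+e|\ge 1-|\omega|$ and a second auxiliary function $h_p$.
\end{itemize}

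You instead bound the absolute value directly. Assuming $|b|\le|a|$, the mean value theorem applied to $t\mapsto |a+tb|^p$ (which is $C^1$ for $p>1$) gives $\bigl||a+b|^p-|a|^p\bigr|\le p\,2^{p-1}|a|^{p-1}|b|$. The bound $|b|^p\le |b|^{p-1}|a|$ then handles the remaining term. Splitting off the last vector and using the triangle inequality, one induction covers both directions at once. The rest of your bookkeeping, including $(\sum_{i<k}|x_i|)^{p-1}\le \max\{1,(k-1)^{p-2}\}\sum_{i<k}|x_i|^{p-1}$, is the same as the paper's \eqref{eq:chain-1}.

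Your route is shorter and gives an explicit two-term constant, $p\,2^{p-1}+1$. The paper's version keeps each direction as an elementary one-variable calculus check, at the cost of running the induction twice.
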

	
	The following simple result is the counterpart of Lemma~5 in~\cite{brezis-coron}.
	
	\begin{lemma}
		\label{lem:sum-bub}
		Let~$k \in \N$ be a natural number. Let~$u,u_1,\dots,u_k \in \mathcal{D}^{1,p}(\R^n)$ be a finite number of non-negative solutions to~\eqref{eq:critica-anisot} and assume that
		\begin{equation}
			\label{eq:sum-u-ui}
			u = \sum_{i=1}^{k} u_i.
		\end{equation}
		Then, it follows that
		\begin{equation}
			\label{eq:est-energ-sum}
			\int_{\R^n} \,\abs*{\nabla u}^p \, dx \leq \sum_{i=1}^{k} \int_{\R^n} \,\abs*{\nabla u_i}^p \, dx.
		\end{equation}
		Moreover, if equality holds in~\eqref{eq:est-energ-sum}, then each~$u_i$ is trivial, with the possible exception of one of them.
	\end{lemma}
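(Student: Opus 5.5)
The plan is to reduce everything to the classification result, Theorem~\ref{th:calssif-anisot}, and then count bubbles, since all $(p,H)$-bubbles have the same energy.

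\emph{Step 1: every term is either zero or a bubble.} Let $w \in \mathcal{D}^{1,p}(\R^n)$ be a non-negative weak solution to~\eqref{eq:critica-anisot}. Then $-\Delta_p^H w = w^{\past-1} \geq 0$, so $w$ is a non-negative supersolution. By the strong maximum principle (Theorem~2.5.1 in~\cite{ps}, already used in the positivity step of the proof of Theorem~\ref{th:struwe-def}), either $w \equiv 0$ or $w>0$ in $\R^n$. In the second case Theorem~\ref{th:calssif-anisot} gives $w = U_p[z,\lambda]$ for some $z\in\R^n$ and $\lambda>0$. We apply this to $u$ and to each $u_i$.

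\emph{Step 2: all bubbles have the same Dirichlet energy.} By Lemma~\ref{lem:sym-anisot}\ref{it:transf-unitbub-anisot}, every bubble satisfies $U_p[z,\lambda]=T_{z,1/\lambda}(U_p[0,1])$. Point~\ref{it:cons-normap-anisot} of the same lemma, applied with the Euclidean norm in place of $H$, gives
\begin{equation*}
E \coloneqq \int_{\R^n} \abs*{\nabla U_p[z,\lambda]}^p \, dx = \int_{\R^n} \abs*{\nabla U_p[0,1]}^p \, dx \in (0,+\infty),
\end{equation*}
independently of $z$ and $\lambda$. The lemma is stated for $C^\infty_c$ functions, but it extends to bubbles because the integrals are finite. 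The same argument with $H$ gives the value $S_p^n$, by~\eqref{eq:energ-bubb-anisot}. Hence the statement holds verbatim whether the energy is measured by $\abs*{\nabla\cdot}^p$ or by $H^p(\nabla\cdot)$.

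\emph{Step 3: counting.} Let $\ell$ be the number of indices $i$ with $u_i\not\equiv 0$. The right-hand side of~\eqref{eq:est-energ-sum} equals $\ell E$.
\begin{itemize}
\item If $\ell=0$, then $u\equiv 0$ and both sides vanish, so equality holds and all $u_i$ are trivial.
\item If $\ell\geq 1$, then $u\geq u_i\not\equiv 0$ for some $i$, because all terms are non-negative. So $u$ is non-trivial, hence a bubble, and the left-hand side equals $E$.
\end{itemize}
Thus~\eqref{eq:est-energ-sum} reads $E\leq \ell E$, which holds. Equality forces $\ell\le 1$, that is, at most one $u_i$ is non-trivial, which is the rigidity statement.

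There is no real obstacle here, since the classification theorem carries all the weight. The one point needing care is Step 1: we must check that a non-negative energy solution which is not identically zero is strictly positive, so that Theorem~\ref{th:calssif-anisot}, stated for positive solutions, applies. Non-negativity of the $u_i$ is also used in Step 3, to guarantee that $u$ is non-trivial as soon as one $u_i$ is.
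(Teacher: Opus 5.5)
Your proof is correct and follows the paper's argument essentially verbatim: every non-trivial non-negative solution is a bubble, all bubbles have the same Euclidean energy $\int_{\R^n}\abs*{\nabla U}^p\,dx$ by Lemma~\ref{lem:sym-anisot}, and counting the non-trivial $u_i$ gives $E\le \ell E$, with equality only when $\ell\le 1$. You are slightly more careful than the paper on two points: the paper leaves implicit the strong maximum principle step needed to invoke Theorem~\ref{th:calssif-anisot}, and it cites point~\ref{it:cons-normapas-anisot}, whereas point~\ref{it:cons-normap-anisot} with the Euclidean norm, as you use it, is the relevant one.
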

	\begin{proof}
		Although the result is rather elementary, we include the details of the proof.
		
		We first note that since all~$u_i$ are non-negative, if each~$u_i$ is trivial, then~$u$ is also trivial by~\eqref{eq:sum-u-ui} and equality holds in~\eqref{eq:est-energ-sum}. Therefore, we may assume that at least one of the~$u_i$ is non-trivial, say~$u_1$.
		
		In addition, by~\eqref{eq:energ-bubb-anisot}, we know that for any non-trivial solution~$U \in \mathcal{D}^{1,p}(\R^n)$ to~\eqref{eq:critica-anisot} we have
		\begin{equation}
			\label{eq:enerHgradU}
			\int_{\R^n} H^p(\nabla U) \, dx = S_p^n.
		\end{equation}
		Now, we observe that thanks to~\eqref{eq:equiv-norma} and~\eqref{eq:enerHgradU}, if follows that
		\begin{equation*}
			\int_{\R^n} \,\abs*{\nabla U_p[0,1]}^p \, dx = \sigma >0,
		\end{equation*}
		moreover, by points~\ref{it:cons-normapas-anisot} and~\ref{it:transf-unitbub-anisot} of Lemma~\ref{lem:sym-anisot}, for any non-trivial solution~$U \in \mathcal{D}^{1,p}(\R^n)$ to~\eqref{eq:critica-anisot} we have
		\begin{equation}
			\label{eq:enerEugradU}
			\int_{\R^n} \,\abs*{\nabla U}^p \, dx = \sigma.
		\end{equation}
		Since all~$u_i$ are non-negative, by~\eqref{eq:sum-u-ui} also~$u$ is non-trivial and, from~\eqref{eq:enerEugradU}, we get
		\begin{equation*}
			\int_{\R^n} \,\abs*{\nabla u}^p \, dx = \sigma = \int_{\R^n} \,\abs*{\nabla u_1}^p \, dx \leq \sum_{i=1}^{k} \int_{\R^n} \,\abs*{\nabla u_i}^p \, dx.
		\end{equation*}
		
		Finally, suppose that the equality holds in~\eqref{eq:est-energ-sum}. If~$k=1$, we have nothing to prove, so we may assume~$k \geq 2$. Suppose by contradiction that, up to reordering,~$u_1,\dots,u_j$ are non-trivial for some~$2 \leq j \leq k$. By~\eqref{eq:sum-u-ui} and since all~$u_i$ are non-negative,~$u$ is again non-trivial and, from~\eqref{eq:enerHgradU}, we deduce
		\begin{equation*}
			\sigma = \int_{\R^n} \,\abs*{\nabla u}^p \, dx = \sum_{i=1}^{j} \int_{\R^n} \,\abs*{\nabla u_i}^p \, dx = j \sigma.
		\end{equation*}
		Since~$\sigma>0$, this implies that~$1=j \geq 2$, a contradiction.
	\end{proof}
	
	We are now in a position to prove the main result of this section.
	
	\begin{proof}[Proof of Theorem~\ref{th:struwe-with-inter}]
		We start by claiming that the sequence~$\{u_m\}_m \subseteq \mathcal{D}^{1,p}(\R^n)$ is Palais-Smale for~$\mathcal{J}$, i.e.,~\eqref{eq:hyp-sruwe-anisot} holds true.
		
		To this end we note that, by~\eqref{eq:equiv-norma} and~\eqref{eq:energ-cond-struwe}, the sequence~$\{u_m\}_m \subseteq \mathcal{D}^{1,p}(\R^n)$ is uniformly bounded. Hence, from~\eqref{eq:hyp-sruwe-anisot-2}, we deduce that
		\begin{equation*}
			\left\langle \mathcal{J}'(u_m), u_m \right\rangle = o(1).
		\end{equation*}
		This, together with~\eqref{eq:J'-um}, yields
		\begin{equation*}
			\mathcal{J}(u_m) = \frac{1}{n} \int_{\R^n} H^p(\nabla u_m) \, dx + o(1).
		\end{equation*}
		As a consequence,~\eqref{eq:hyp-sruwe-anisot-2} implies that~$\{\mathcal{J}(u_m)\}_m \subseteq \R$ is uniformly bounded. Thus, possibly passing to a subsequence, there exists~$c_\star \in \R$ such that
		\begin{equation*}
			\mathcal{J}(u_m) \to c_\star,
		\end{equation*}
		thereby proving the assertion.
		
		Since~\eqref{eq:hyp-sruwe-anisot} holds, Theorem~\ref{th:struwe-def} applies and, considering also the energy bound~\eqref{eq:energ-cond-struwe}, we infer the validity of~\eqref{eq:struwe-plap-2}. Thus, it remains to prove~\eqref{eq:interaction-plap}.
		
		For this purpose, we follow the scheme of the proof of Theorem~2 in~\cite{brezis-coron}. Let us introduce the equivalence relation on the integers $i,j \in \left\{1,\dots,k\right\}$ defined by
		\begin{equation*}
			i \sim j \text{ if and only if } \max\left\{\frac{\lambda^i_m}{\lambda^j_m}, \frac{\lambda^j_m}{\lambda^i_m}, \frac{\abs*{y^i_m-y^j_m}^2}{\lambda_m^i \lambda_m^j} \right\} \text{ remains bounded as } m \to +\infty,
		\end{equation*}
		and denote by~$I_1,\dots,I_h$ the equivalence classes. We shall prove that
		each equivalence class contains precisely one element, which implies the validity of~\eqref{eq:interaction-plap}.
		
		For each~$i=1,\dots,k$, we define
		\begin{equation}
		\label{eq:def-vjm}
			v_m^i(x) \coloneqq \left(\lambda^i_m\right)^{\!\frac{p-n}{p}} v_i \!\left(\frac{x - y^i_m}{\lambda^i_m}\right) = T_{y^i_m,1/\lambda^i_m}(v_i)(x),
		\end{equation}
		according to the notation of Section~\ref{sec:symm-anisot}. We first claim that
		\begin{equation}
			\label{eq:ij-non-equiv}
			\int_{\R^n} \,\abs*{\nabla v_m^i}^{p-1} \abs*{\nabla v_m^j} \, dx = o(1) \quad\text{as } m \to +\infty \text{ for } i \not\sim j.
		\end{equation}
		Changing variables, we get
		\begin{equation}
			\label{eq:change-var-vim}
			\int_{\R^n} \,\abs*{\nabla v_m^i}^{p-1} \abs*{\nabla v_m^j} \, dx = \int_{\R^n} \phi_m^i \,\abs*{\nabla v^j} \, dx = \int_{\R^n} \,\abs*{\nabla v_i}^{p-1} \phi_m^j \, dx,
		\end{equation}
		where we set
		\begin{gather*}
			\phi_m^i(x) \coloneqq \left(\frac{\lambda^j_m}{\lambda^i_m}\right)^{\!\!\frac{n(p-1)}{p}}\abs*{\nabla v_i \!\left(\frac{\lambda^j_m x + y^j_m - y^i_m}{\lambda^i_m}\right)}^{p-1}, \\
			\phi_m^j(x) \coloneqq \left(\frac{\lambda^i_m}{\lambda^j_m}\right)^{\!\!\frac{n}{p}}\abs*{\nabla v_j \!\left(\frac{\lambda^i_m x + y^i_m - y^j_m}{\lambda^j_m}\right)}.
		\end{gather*}
		Since~$i$ and~$j$ are not is the same equivalence class, we may assume, up to subsequences, that the following trichotomy holds
		\begin{multline}
			\label{eq:lij-1}
			\text{either }\;\frac{\lambda^i_m}{\lambda^j_m} \to 0 \;\text{ or } \;\frac{\lambda^j_m}{\lambda^i_m} \to 0 \\
			\text{or } \;\frac{\lambda^i_m}{\lambda^j_m} \to \ell \;\text{ and } \;\frac{\abs*{y^i_m-y^j_m}^2}{\lambda_m^i \lambda_m^j} \to +\infty \quad\text{as } m \to +\infty,
		\end{multline}
		for some~$\ell \in (0,+\infty)$. In particular, it is easy to see that the last possibility in~\eqref{eq:lij-1} entails 
		\begin{equation}
			\label{eq:lij-2}
			\frac{\lambda^i_m}{\lambda^j_m} \to \ell \quad\text{and}\quad  \frac{\abs*{y^i_m-y^j_m}}{\lambda_m^j} \to +\infty \quad\text{as } m \to +\infty.
		\end{equation}
		Changing variables and using~\eqref{eq:energ-bubb-anisot} and~\eqref{eq:equiv-norma}, we have
		\begin{gather*}
			\int_{\R^n} \,\abs*{\phi_m^i}^{p'} dx = \int_{\R^n} \,\abs*{\nabla v_i}^p \, dx \leq C_H^p S_p^n, \\
			\int_{\R^n} \,\abs*{\phi_m^j}^p \, dx = \int_{\R^n} \,\abs*{\nabla v_j}^p \, dx \leq C_H^p S_p^n,
		\end{gather*}
		hence the sequence~$\{\phi_m^i\}_m$ is bounded in~$L^{p'}\!(\R^n)$ and~$\{\phi_m^j\}_m$ is bounded in~$L^p(\R^n)$. Moreover,it is readily seen that~\eqref{eq:lij-1}--\eqref{eq:lij-2} imply that~$\phi_m^i \to 0$ and~$\phi_m^j \to 0$ a.e.\ in~$\R^n$, whence
		\begin{equation*}
			\phi_m^i \to 0 \quad\text{weakly in } L^{p'}\!(\R^n) \quad\text{and}\quad \phi_m^j \to 0 \quad\text{weakly in } L^p(\R^n).
		\end{equation*}
		Since~$\abs*{\nabla v_j} \in L^{p}(\R^n)$ and~$\abs*{\nabla v_i}^{p-1} \in L^{p'}\!(\R^n)$, these convergences, together with~\eqref{eq:change-var-vim}, yield the validity of~\eqref{eq:ij-non-equiv}.
		
		For~$i \sim j$, we note that
		\begin{equation*}
			\frac{\abs*{y^i_m-y^j_m}^2}{(\lambda_m^j)^2} = \frac{\lambda^i_m}{\lambda^j_m} \frac{\abs*{y^i_m-y^j_m}^2}{\lambda_m^i \lambda_m^j} \quad\text{remains bounded as } m \to +\infty,
		\end{equation*}
		so we are led to introduce the quantities
		\begin{equation*}
			\ell_{ij} \coloneqq \lim_{m \to +\infty} \frac{\lambda^i_m}{\lambda^j_m} \quad\text{and}\quad p_{ij} \coloneqq \lim_{m \to +\infty} \frac{y^i_m-y^j_m}{\lambda_m^j}.
		\end{equation*}
		For each equivalence class~$I$, we fix an index~$i \in I$ and set
		\begin{equation}\label{def:vI}
			v_I(x) \coloneqq \sum_{j \in I} \ell_{ij}^{\frac{n-p}{p}} v_j(\ell_{ij} x + p_{ij}) = v_i(x) + \sum_{j \in I \setminus \{i\}} \ell_{ij}^{\frac{n-p}{p}} v_j(\ell_{ij} x + p_{ij}),
		\end{equation}
		which is non-negative.
		
		We claim that, for each equivalence class~$I$, there holds
		\begin{equation}
			\label{eq:ener-sum-I}
			\int_{\R^n} \,\abs*{\sum_{j \in I} \nabla v_m^j}^p \, dx = \int_{\R^n} \,\abs*{\nabla v_I}^p \, dx + o(1) \quad\text{as } m \to +\infty.
		\end{equation}
		Indeed, changing variables and recalling the definition of~$v_m^j$ in~\eqref{eq:def-vjm}, we get
		\begin{align*}
			\int_{\R^n} \,\abs*{\sum_{j \in I} \nabla v_m^j(x)}^p \, dx &= \int_{\R^n} \,\abs*{\nabla v_m^i(x) + \sum_{j \in I \setminus \{i\}} \nabla v_m^j(x)}^p \, dx \\
			&= \int_{\R^n} \,\abs*{\nabla v_i(x) + \sum_{j \in I \setminus \{i\}} \left(\frac{\lambda^i_m}{\lambda^j_m}\right)^{\!\!\frac{n}{p}} \nabla v_j \!\left(\frac{\lambda^i_m x + y^i_m - y^j_m}{\lambda^j_m}\right) }^p \, dx.
		\end{align*}
		Moreover, passing to a subsequence if necessary, we have
		\begin{equation*}
			\left(\frac{\lambda^i_m}{\lambda^j_m}\right)^{\!\!\frac{n}{p}} \nabla v_j \!\left(\frac{\lambda^i_m x + y^i_m - y^j_m}{\lambda^j_m}\right) \to \ell_{ij}^{\frac{n}{p}} \,\nabla v_j(\ell_{ij} x + p_{ij}) \quad\text{strongly in } L^p(\R^n).
		\end{equation*}
		As a result,~\eqref{eq:ener-sum-I} immediately follows.
		
		Now we prove that, for each equivalence class~$I$, the function~$v_I$ is a solution of~\eqref{eq:critica-anisot}. To this end, we define
		\begin{equation*}
			\Theta_m \coloneqq u_m - \sum_{i=1}^{k} v_m^i,
		\end{equation*}
		so that \eqref{eq:struwe-plap-2} and Sobolev embedding imply
		\begin{equation}
			\label{eq:Thetam-o1}
			\int_{\R^n} \,\abs*{\nabla \Theta_m}^p \, dx = o(1) \quad\text{and}\quad \norma*{\Theta_m}_{L^{\past}\!(\R^n)} = o(1) \quad\text{as } m \to +\infty.
		\end{equation}
		For~$i \in I$ fixed as above, we set
		\begin{equation}
			\label{eq:def-um-T}
			\begin{split}
				\mathsf{u}_m(x) &\coloneqq \left(\lambda^i_m\right)^{\!\frac{n-p}{p}} u_m \!\left(\lambda^i_m x + y^i_m\right) = T_{-y^i_m/\lambda^i_m,\lambda^i_m}(u_m)(x) \\
				&= v_i(x) + \sum_{j \neq i} \left(\frac{\lambda^i_m}{\lambda^j_m}\right)^{\!\!\frac{n-p}{p}} v_j \!\left(\frac{\lambda^i_m x + y^i_m - y^j_m}{\lambda^j_m}\right) + \widehat{\Theta}_m(x),
			\end{split}
		\end{equation}
		where~$\widehat{\Theta}_m \coloneqq T_{-y^i_m/\lambda^i_m,\lambda^i_m}(\Theta)$. Thus, using~\eqref{eq:energ-cond-struwe} and recalling also point~\ref{it:cons-normap-anisot} in Lemma~\ref{lem:sym-anisot}, the sequence~$\{\mathsf{u}_m\}_m$ is bounded in~$\mathcal{D}^{1,p}(\R^n)$. Therefore, up to subsequences, we have
		\begin{equation}
		\label{eq:conv-um}
			\mathsf{u}_m  \to v \;\,\text{weakly in } \mathcal{D}^{1,p}(\R^n) \quad\text{and}\quad \mathsf{u}_m \to v \;\,\text{a.e.\ in } \R^n,\\
		\end{equation}
		for some~$v \in \mathcal{D}^{1,p}(\R^n)$. In light of the definition of~$\mathsf{u}_m$ in~\eqref{eq:def-um-T} and~\eqref{eq:hyp-sruwe-anisot}, we can apply Lemma~\ref{lem:struwe-iter} to deduce that~$\mathcal{J}'(v)=0$. As a result,~$v$ is a non-negative solution to~\eqref{eq:critica-anisot}.
		
		On the other hand, by~\eqref{eq:Thetam-o1} and point~\ref{it:cons-normapas-anisot} of Lemma~\ref{lem:sym-anisot}, we deduce that
		\begin{equation}
			\label{eq:conv-Thm}
			\widehat{\Theta}_m \to 0 \quad\text{strongly in } L^{\past}\!(\R^n) \quad\text{and}\quad \widehat{\Theta}_m \to 0 \quad\text{a.e.\ in } \R^n,
		\end{equation}
		possibly up to a subsequence. Moreover, for~$j \not \in I$, we claim that
		\begin{equation}
			\label{eq:conv-vjlam}
			\left(\frac{\lambda^i_m}{\lambda^j_m}\right)^{\!\!\frac{n-p}{p}} v_j \!\left(\frac{\lambda^i_m x + y^i_m - y^j_m}{\lambda^j_m}\right) \to 0 \quad\text{for a.e.\ } x \in \R^n.
		\end{equation}
		Exploiting~\eqref{eq:conv-um}--\eqref{eq:conv-vjlam} and taking the limit for~$m \to +\infty$ in~\eqref{eq:def-um-T}, we deduce that~$v=v_I$, where $v_I$ was given by \eqref{def:vI}. Since~$v$ is a  non-negative solution to~\eqref{eq:critica-anisot}, so is~$v_I$.
		
		To justify~\eqref{eq:conv-vjlam}, observe that if the first or third case in~\eqref{eq:lij-1} occurs, the claim follows from the fact that~$v_i \in \mathcal{D}^{1,p}(\R^n) \cap L^\infty(\R^n)$ and vanishes at infinity, since~$v_i$ is a~$(p,H)$-bubble. If the second case in~\eqref{eq:lij-1} occurs, the claim follows using the explicit expression of~$v_i$ given in~\eqref{eq:pH-bubb}.
		
		Since, for every equivalence class,~$v_I$ is a non-negative solution to~\eqref{eq:critica-anisot}, Lemma~\ref{lem:sum-bub} implies that
		\begin{equation}
			\label{eq:energ-tochain-1}
			\int_{\R^n} \,\abs*{\nabla v_I}^p \, dx \leq \sum_{j \in I} \int_{\R^n} \,\abs*{\nabla v_j}^p \, dx
		\end{equation}
		with equality if and only if~$I$ consists of a single element, since each~$v_j$ is non-trivial.
		
		Now we claim that
		\begin{equation}
		\label{eq:ener-est-p-toprove}
			\norma*{\nabla u_m}_{L^{p}(\R^n)}^p = 
			\norma*{\sum_{i=1}^{k} \nabla v^i_m}_{L^{p}(\R^n)}^p + o(1).
		\end{equation}
		By the mean value theorem, for~$t,s>0$ there holds~$|t^p-s^p|\leq p\,\max\{t,s\}^{p-1}|t-s|$. We thus have that
		\begin{align*}
			&\abs*{\norma*{\nabla u_m}_{L^{p}(\R^n)}^p - \norma*{\sum_{i=1}^{k} \nabla v^i_m}_{L^{p}(\R^n)}^p} \\
			&\quad\leq p\, \max\left\{\norma*{\nabla u_m}_{L^{p}(\R^n)}, 
			\norma*{\sum_{i=1}^{k} \nabla v^i_m}_{L^{p}(\R^n)}\right\}^{\!p-1} \abs*{\norma*{\nabla u_m}_{L^{p}(\R^n)} - \norma*{\sum_{i=1}^{k} \nabla v^i_m}_{L^{p}(\R^n)}}.
		\end{align*}
		On the other hand, using~\eqref{eq:equiv-norma}, the the energy bounds~\eqref{eq:energ-bubb-anisot} and~\eqref{eq:energ-cond-struwe}, the definition~\eqref{eq:def-vjm}, and point~\ref{it:cons-normap-anisot} of Lemma~\ref{lem:sym-anisot}, we deduce that
		\begin{align*}
			\abs*{\norma*{\nabla u_m}_{L^{p}(\R^n)}^p - \norma*{\sum_{i=1}^{k} \nabla v^i_m}_{L^{p}(\R^n)}^p} \leq C \,\abs*{\norma*{\nabla u_m}_{L^{p}(\R^n)} - \norma*{\sum_{i=1}^{k} \nabla v^i_m}_{L^{p}(\R^n)}},
		\end{align*}
		for some~$C>0$ independent of~$m$. Finally,~\eqref{eq:struwe-plap-2} implies that
		\begin{equation*}
			\abs*{\norma*{\nabla u_m}_{L^{p}(\R^n)} - \norma*{\sum_{i=1}^{k} \nabla v^i_m}_{L^{p}(\R^n)}} \to 0,
		\end{equation*}
		from which~\eqref{eq:ener-est-p-toprove} follows. Thus, Lemma~\ref{lem:xi-p} and~\eqref{eq:ij-non-equiv} yield
		\begin{equation*}
			\int_{\R^n} \,\abs*{\nabla u_m}^p \, dx = \sum_{q=1}^{h} \int_{\R^n} \,\abs*{\sum_{j \in I_q} \nabla v^j_m}^p \, dx +o(1) ,
		\end{equation*}
		from which, thanks to~\eqref{eq:ener-sum-I}, we infer
		\begin{equation}
			\label{eq:energ-tochain-2}
			\int_{\R^n} \,\abs*{\nabla u_m}^p \, dx = \sum_{q=1}^{h} \int_{\R^n} \,\abs*{\nabla v_{I_q}}^p \, dx +o(1).
		\end{equation}
		In addition, from~\eqref{eq:struwe-plap-2}, we also read off that
		\begin{equation}
			\label{eq:energ-tochain-3}
			\int_{\R^n} \,\abs*{\nabla u_m}^p \, dx = \sum_{j=1}^{k} \int_{\R^n} \,\abs*{\nabla v_j}^p \, dx +o(1).
		\end{equation}
		Combining~\eqref{eq:energ-tochain-1},~\eqref{eq:energ-tochain-2}, and~\eqref{eq:energ-tochain-3}, we deduce that
		\begin{align*}
			 \sum_{q=1}^{h} \int_{\R^n} \,\abs*{\nabla v_{I_q}}^p \, dx &\leq \sum_{q=1}^{h} \sum_{j \in I_q} \int_{\R^n} \,\abs*{\nabla v_j}^p \, dx = \sum_{j=1}^{k} \int_{\R^n} \,\abs*{\nabla v_j}^p \, dx \\
			 &= \int_{\R^n} \,\abs*{\nabla u_m}^p \, dx + o(1) = \sum_{q=1}^{h} \int_{\R^n} \,\abs*{\nabla v_{I_q}}^p \, dx.
		\end{align*}
		Therefore, equality holds in~\eqref{eq:energ-tochain-1} for each equivalence class, thereby proving the theorem.
	\end{proof}

	
	\section{Integral inequalities involving the $P$-function}
	\label{sec:ineq-Pfunct}
	
	
	\subsection{Introduction of the~$P$-function and of the relevant vector fields.}
	
	Since~$\kappa \in L^\infty(\R^n)$, it is well known, by the results of Peral~\cite{peral-ictp}, Serrin~\cite{serr}, DiBenedetto~\cite{diben}, and Tolksdorf~\cite{tolk} -- see also~\cite{caa-reg} for the local regularity result --, that any solution~$u \in \mathcal{D}^{1,p}(\R^n)$ of~\eqref{eq:maineq-bubb-anisot} actually satisfies
    \begin{equation}
    \label{eq:u-C1alpha}
   		u \in W^{1,\infty}(\R^n) \cap C^{1,\alpha}_{\loc}(\R^n) \quad\text{for some } \alpha \in (0,1).
    \end{equation}
	Moreover, by an adaptation of V\'etois' Lemma~2.2 in~\cite{vet}, we also have that~$u \in L^{p_\ast-1,\infty}(\R^n)$, where~$p_\ast\coloneqq\frac{p(n-1)}{n-p}$. Hence, by interpolation,
	\begin{equation*}
		\label{eq:integrabilita}
		u \in L^q(\R^n) \quad\text{for every } q \in \left(p_\ast-1,+\infty\right]\!.
	\end{equation*}
	Furthermore, an adaptation of Theorem~1.1 in~\cite{vet} provides a priori estimates for~$u$ and its gradient, namely 
	\begin{equation}
		\label{eq:est-vetois}
		u(x) \leq \frac{C_u}{1+\abs*{x}^\frac{n-p}{p-1}} \quad \text{and} \quad \abs*{\nabla u(x)} \leq \frac{C_u}{1+\abs*{x}^\frac{n-1}{p-1}} \quad \text{for every } x \in \R^n
	\end{equation}
	for some constant~$C_u>0$ depending on~$u$. See also Proposition~2.3 in~\cite{cfr}.
	
	Concerning the regularity of the solution~$u$ to~\eqref{eq:maineq-bubb-anisot}, further results have been established by Antonini, Ciraolo \& Farina~\cite{carlos} -- see also the references therein. Let us define the critical set of~$u$ as
	\begin{equation*}
		\mathcal{Z}_u \coloneqq \left\{x \in \R^n \,\lvert\, \nabla u(x)=0 \right\}\!,
	\end{equation*}
	and the \textit{stress field} associated with~$u$ by
	\begin{equation*}
		\stressu=\frac{1}{p}\nabla_\xi H^p(\nabla u) \,,
	\end{equation*} 
	which is extended to zero on~$\mathcal{Z}_u$. Then, the set~$\mathcal{Z}_u$ is negligible, i.e.,~$\abs*{\mathcal{Z}_u}=0$, and
	\begin{gather}
		\notag
		\stressu \in W^{1,2}_{\loc}(\R^n)\cap C^{0,\alpha}_{\loc}(\R^n), \\
		\label{eq:regu-anisot}
		u \in W^{2,2}_{\loc}(\R^n \setminus \mathcal{Z}_u) \quad \text{and} \quad \abs*{\nabla u}^{p-2} \,\nabla^2 u \in L^{2}_{\loc}(\R^n \setminus \mathcal{Z}_u) 
	\end{gather}

	We now claim that
    \begin{equation}
    \label{reg:c2beta}
        u\in C^{3,\beta}_{\loc}(\R^n\setminus \mathcal{Z}_u).
    \end{equation}
	Indeed, by the chain rule, in the open set~$\R^n\setminus \mathcal{Z}_u$,~\eqref{eq:critica-anisot} can be written as
    \begin{equation}
    \label{eq:trace-equ}
        \mathrm{tr}\!\left(\nabla^2_\xi H^p(\nabla u)\,D^2 u \right)+\kappa(x)u^{\past-1}=0\quad\text{in } \R^n\setminus \mathcal{Z}_u.
    \end{equation}
	Now, let~$x_0\in \Omega \setminus \mathcal Z_u$. By continuity, one can find a radius~$\varrho_{x_0}>0$ and a couple of constants~$c_{x_0},C_{x_0}>0$ such that~$B_{\varrho_{x_0}}(x_0)\subseteq \Omega \setminus \mathcal Z_u$, and~$c_{x_0}\leq H(\nabla u)\leq C_{x_0}$ in~$B_{\varrho_{x_0}}(x_0)$. Combining this piece of information with~\eqref{eq:nabla2Hp:twosides}, we have that \eqref{eq:trace-equ} is a uniformly elliptic equation in $B_{\varrho}(x_0)$. Given the regularity of~$H\in C^{3,\beta}(\R^n\setminus \{0\})$ and of~$u$ in~\eqref{eq:u-C1alpha}, standard regularity theory -- see~\cite[Theorem~9.19]{gt} -- yields that~$u\in C^{3,\beta}(B_{\varrho_{x_0}}(x_0))$. As~$x_0\in \Omega\setminus \mathcal{Z}_u$ was arbitrary, a standard covering argument implies~\eqref{reg:c2beta}. \newline
  
	Now, as~$u>0$, we introduce the auxiliary function~$v$ given by
	\begin{equation}
		\label{eq:defv-anisot}
		v \coloneqq u^{-\frac{p}{n-p}},
	\end{equation}
	and the so-called $P$-function as
	\begin{equation}
		\label{eq:defPfunct-anisot}
		P \coloneqq n \,\frac{p-1}{p} v^{-1} \, H^p \!\left(\nabla v\right) + \left(\frac{p}{n-p}\right)^{\! p-1} v^{-1}.
	\end{equation}
	Note that, by~\eqref{eq:defV-anisot}, we can rewrite
	\begin{equation*}
		P = \frac{n \left(p-1\right)}{v} \, V\!\left(\nabla v\right) + \left(\frac{p}{n-p}\right)^{\! p-1} \frac{1}{v}.
	\end{equation*}
	From this definition and~\eqref{eq:maineq-bubb-anisot}, it follows that~$v$ weakly solves
	\begin{equation}
		\label{eq:eqforv-anisot}
		\Delta_p v = P + R \quad \text{in } \R^n,
	\end{equation}
	where~$P$ is given by~\eqref{eq:defPfunct-anisot}, and
	\begin{equation}
		\label{eq:defRem-anisot}
		R \coloneqq \left(\frac{p}{n-p}\right)^{\! p-1} \left(\kappa-1\right) v^{-1}.
	\end{equation}
	
	Furthermore,~$v$ inherits some regularity properties from those of~$u$ listed in~\eqref{eq:u-C1alpha} and in~\eqref{eq:regu-anisot}--\eqref{reg:c2beta}, more precisely
	\begin{gather}
		\notag
		\mathcal{Z}_v = \mathcal{Z}_u, \\
		\label{eq:regv-anisot}
		v \in C^{1,\alpha}_{\loc}(\R^n)\cap C^{3,\beta}_{\loc}(\R^n\setminus \mathcal{Z}_v) \\
		\notag
        \stressv \in W^{1,2}_{\loc}(\R^n)\cap C^{0,\alpha}_{\loc}(\R^n).
	\end{gather}
	
	Additionally, thanks to the homogeneity of~$H$ and~\eqref{H0:nablH}, we have
	\begin{equation*}
		V(\nabla v)=H_0^{\frac{p}{p-1}}(a(\nabla v)),
	\end{equation*}
	so  the chain rule,~\eqref{eq:defPfunct-anisot}, and~\eqref{eq:regv-anisot} yield
	\begin{gather}
		\notag
		V \!\left(\nabla v\right) \in W^{1,2}_{\loc}(\R^n) \cap C^{0,\alpha}_{\loc}(\R^n), \\
		\label{eq:regP-anisot}
		P \in W^{1,2}_{\loc}(\R^n) \cap C^{0,\alpha}_{\loc}(\R^n).
	\end{gather}
	
	Finally, we introduce the centrally important vector field
	\begin{equation*}
		\label{eq:def-gradA}
		W \coloneqq \nabla \stressv.
	\end{equation*}
	We also define its traceless version as
	\begin{equation}
	\label{eq:tracless-W-anisot}
		\mathring{W} = \mathring{\nabla} \stressv \coloneqq \nabla \stressv - \frac{\tr \nabla \stressv}{n} \Id,
	\end{equation}
	where~$\Id$ is the~$n$-dimensional identity matrix. From~\eqref{eq:regv-anisot}, we deduce that
	\begin{equation*}
		W \in L^2_{\loc}(\R^n) \cap C^{1,\beta}_{\loc} \!\left(\R^n \setminus \mathcal{Z}_v\right)\!,
	\end{equation*}
	and by the chain rule and~\eqref{eq:regv-anisot}, we can also write
	\begin{equation*}
		\label{eq:defW-anisot}
		W(x)=
		\begin{dcases}
			A(x) D^2v(x)	& \quad \text{if } x \in \R^n \setminus \mathcal{Z}_v, \\
			0				& \quad \text{if } x \in \mathcal{Z}_v,
		\end{dcases}
	\end{equation*}
	where~$A$ is the matrix with components
	\begin{equation}
	\label{eq:def-A-anisot}
		\left[A(x)\right]_{ij} = \alpha_{ij}(x) \coloneqq \partial_{\xi_i\xi_j} V \!\left(\nabla v\right)\!(x) \quad\text{for } x \in \R^n \setminus \mathcal{Z}_v.
	\end{equation}
	In particular,~$A$ is well defined and of class~$C^{1}$ in~$\R^n \setminus \mathcal{Z}_v$, which has full measure, hence~$A$ is measurable and we may extend it identically equal to zero on~$\mathcal{Z}_v$. Moreover, by Lemma~3.2 in~\cite{carlos+big4} -- see also Proposition~3.1 in~\cite{cozzi-monot} -- and~\eqref{eq:equiv-norma}, we have
	\begin{equation}
	\label{eq:A-boundls}
    \left\langle A \eta, \eta\right\rangle \geq c \,\abs*{\nabla v}^{p-2}\,\abs*{\eta}^{2} \quad\text{and}\quad |A|\leq C \, \abs*{\nabla v}^{p-2} \quad\text{on } \R^n\setminus \mathcal{Z}_v,
	\end{equation}
	for all $\eta\in \R^n$ and some~$c,C>0$.
    
	With this notation at hand, we can rewrite~\eqref{eq:eqforv-anisot} as
	\begin{equation}
	\label{eq:trW-anisot}
		\tr W =\diver\left(a(\nabla v)\right)= \alpha_{ij} v_{ij} = P + R \quad \text{a.e.\ in } \R^n.
	\end{equation}
	
	We conclude by noticing that a direct computation from~\eqref{eq:defPfunct-anisot} reveals
	\begin{equation}
	\label{eq:grad-Puso}
    	\nabla P=\frac{n(p-1)}{v} \,\nabla V(\nabla v)-\frac{P}{v}\,\nabla v \quad \text{on } \R^n \setminus \mathcal{Z}_v,
	\end{equation}
	and we also have
	\begin{equation}
	\label{eq:gradP-anisot}
		\nabla P = \frac{n}{v} \left(W^\mathsf{T} - \frac{P}{n} \Id\right) \nabla v = \frac{n}{v}\left(\mathring{W}^\mathsf{T}+\frac{R}{n} \Id\right)\,\nabla v \quad \text{on } \R^n \setminus \mathcal{Z}_v,
	\end{equation}
    where~$W^\mathsf{T}$ denotes the transpose matrix of~$W$. Indeed, by  the chain rule and the homogeneity of~$H$, we deduce
	\begin{equation*}
    \begin{split}
        (p-1)\,\partial_{x_j}V(\nabla v)&=\frac{(p-1)}{p}\,\partial_{\xi_k} H^p(\nabla v)\,v_{jk}=\frac{1}{p}\,\partial_{\xi_k\xi_l}H^p(\nabla v)\,v_l v_{jk}
        \\
        &=\frac{1}{p} \,\partial_{x_j}\!\left(\partial_{\xi_l} H^p(\nabla v)\right) v_l=\left(\nabla a(\nabla v)^\mathsf{T}\,\nabla v\right)_{\! j} = \left(W^\mathsf{T}\nabla v\right)_{\! j}.
    \end{split}
	\end{equation*}
    This identity, coupled with~\eqref{eq:grad-Puso}, entails the first identity of~\eqref{eq:gradP-anisot}. The second one follows from~\eqref{eq:trW-anisot} and the definition of~$\mathring{W}$ in~\eqref{eq:tracless-W-anisot}.
    
	
	\subsection{Inequalities involving the $P$-function.}
	\label{subsec:ineqP}
	
	We start by recalling a differential identity, established in Proposition~3.1 of~\cite{cg-plap}, which holds for sufficiently smooth functions and will be useful later on.
	
	\begin{proposition}
		\label{prop:id-diff}
		Let~$\Omega\subseteq \R^n$ be an open set and~$w\in C^3(\Omega)$ be a positive function. Let~$\mathscr{V}:\R^n \to [0,+\infty)$ be a function of class $ C^3(\R^n\setminus\{0\})$, and define
		\begin{equation*}
			\mathsf{P} \coloneqq \frac{n \left(p-1\right)}{w} \, \mathscr{V}\!\left(\nabla w\right) + \left(\frac{p}{n-p}\right)^{\! p-1} \frac{1}{w}.
		\end{equation*}
		Then, by setting
		\begin{equation*}
			\mathsf{a}\!\left(\nabla w\right)\coloneqq \nabla_\xi \mathscr{V}\!\left(\nabla w\right)\!,\quad \left[\mathsf{A}\right]_{ij} =\tilde{\alpha}_{ij}\coloneqq\partial_{\xi_i\xi_j}\mathscr{V}\!\left(\nabla w\right)\!, \quad\text{and}\quad \mathscr{W} \coloneqq \nabla  \mathsf{a}\!\left(\nabla w\right)\!,
		\end{equation*}
		the following differential identity holds in~$\Omega\setminus\{x \in \Omega \,\lvert\, \nabla w(x)=0\}$
		\begin{multline}
			\label{eq:id-diff-anisot}
			\diver\!\left(w^{2-n}\mathsf{A}\nabla \mathsf{P} \right)=w^{1-n}\Bigl\{-n \left\langle \mathsf{A}\,\nabla\mathsf{P}, \nabla w\right\rangle-\mathsf{P} \tr\mathscr{W}
			\\
			+n\left(p-1\right)\left[\tr\mathscr{W}^2+ \left\langle \nabla\!\left(\tr\mathscr{W}\right), \mathsf{a}\!\left(\nabla w\right) \right\rangle \right]-\mathsf{P} w_j\partial_{\xi_i\xi_j\xi_k}\mathscr{V}\!\left(\nabla w\right) w_{ki}\Bigr\}.
		\end{multline}
	\end{proposition}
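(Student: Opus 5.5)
The identity is purely pointwise, so I will prove it by direct computation on the open set $\Omega\setminus\{\nabla w=0\}$. There $w\in C^3$ and $\mathscr{V}\in C^3$ near $\nabla w$, so all the derivatives below exist classically. I use the summation convention and write $\mathsf{a}_k=\partial_{\xi_k}\mathscr{V}(\nabla w)$ and $\tilde\alpha_{ij}=\partial_{\xi_i\xi_j}\mathscr{V}(\nabla w)$.

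\textbf{Step 1: chain-rule identities.} First I record
\begin{equation*}
\partial_j\bigl(\mathscr{V}(\nabla w)\bigr)=\mathsf{a}_k w_{kj}, \qquad \mathscr{W}_{ik}=\partial_i\mathsf{a}_k=\tilde\alpha_{kl}w_{li}, \qquad \tr\mathscr{W}=\tilde\alpha_{ij}w_{ij}.
\end{equation*}
I also need
\begin{equation*}
\partial_i\tilde\alpha_{ij}=\partial_{\xi_i\xi_j\xi_k}\mathscr{V}(\nabla w)\,w_{ki}.
\end{equation*}
From the definition of $\mathsf{P}$,
\begin{equation*}
\partial_j\mathsf{P}=\frac{n(p-1)}{w}\,\mathsf{a}_k w_{kj}-\frac{\mathsf{P}}{w}\,w_j .
\end{equation*}
The key observation is that $\tilde\alpha_{ij}w_{jk}=\partial_k\mathsf{a}_i$, using symmetry of $\tilde\alpha$ and of $D^2w$. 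Hence the first part of $(\mathsf{A}\nabla\mathsf{P})_i$ is $\frac{n(p-1)}{w}\,\mathsf{a}_k\,\partial_k\mathsf{a}_i$. No homogeneity of $\mathscr{V}$ should be needed.

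\textbf{Step 2: expand the divergence.} I write
\begin{equation*}
\diver\bigl(w^{2-n}\mathsf{A}\nabla\mathsf{P}\bigr)=(2-n)\,w^{1-n}\langle\mathsf{A}\nabla\mathsf{P},\nabla w\rangle+w^{2-n}\,\partial_i\bigl(\tilde\alpha_{ij}\partial_j\mathsf{P}\bigr).
\end{equation*}
Then I split $\partial_i(\tilde\alpha_{ij}\partial_j\mathsf{P})$ according to the two pieces of $\partial_j\mathsf{P}$.

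\emph{First piece.} Differentiating $\frac{n(p-1)}{w}\,\mathsf{a}_k\,\partial_k\mathsf{a}_i$ in $x_i$ gives three terms:
\begin{itemize}
\item $-\frac{n(p-1)}{w^2}\,w_i\,\mathsf{a}_k\,\partial_k\mathsf{a}_i$;
\item $\frac{n(p-1)}{w}\,\partial_i\mathsf{a}_k\,\partial_k\mathsf{a}_i=\frac{n(p-1)}{w}\tr\mathscr{W}^2$;
\item $\frac{n(p-1)}{w}\,\mathsf{a}_k\,\partial_k\partial_i\mathsf{a}_i=\frac{n(p-1)}{w}\,\langle\nabla\tr\mathscr{W},\mathsf{a}\rangle$, by commuting derivatives, which is legitimate since $\mathsf{a}(\nabla w)\in C^2$ there.
\end{itemize}

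\emph{Second piece.} Differentiating $-\frac{\mathsf{P}}{w}\,\tilde\alpha_{ij}w_j$ in $x_i$ gives four terms:
\begin{itemize}
\item $-\frac1w\langle\mathsf{A}\nabla\mathsf{P},\nabla w\rangle$;
\item $+\frac{\mathsf{P}}{w^2}\langle\mathsf{A}\nabla w,\nabla w\rangle$;
\item $-\frac{\mathsf{P}}{w}\tr\mathscr{W}$;
\item $-\frac{\mathsf{P}}{w}\,w_j\,\partial_{\xi_i\xi_j\xi_k}\mathscr{V}(\nabla w)\,w_{ki}$.
\end{itemize}

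\textbf{Step 3: collect terms.} The only delicate point is bookkeeping. The first term of the first piece and the second term of the second piece combine to
\begin{equation*}
-\frac1w\Bigl\langle\mathsf{A}\nabla w,\ \frac{n(p-1)}{w}\nabla\mathscr{V}(\nabla w)-\frac{\mathsf{P}}{w}\nabla w\Bigr\rangle=-\frac1w\langle\mathsf{A}\nabla\mathsf{P},\nabla w\rangle.
\end{equation*}
This relies on the symmetry of $\mathsf{A}$ and on the identity $\mathsf{a}_k\partial_k\mathsf{a}_i\,w_i=\langle\mathsf{A}\nabla w,\nabla\mathscr{V}(\nabla w)\rangle$, which follows from Step 1.

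So $\partial_i(\tilde\alpha_{ij}\partial_j\mathsf{P})$ contains $-\frac{2}{w}\langle\mathsf{A}\nabla\mathsf{P},\nabla w\rangle$ in total. Multiplying by $w^{2-n}$ and adding the prefactor term, the coefficient of $w^{1-n}\langle\mathsf{A}\nabla\mathsf{P},\nabla w\rangle$ is $(2-n)-2=-n$.

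The remaining terms are exactly
\begin{equation*}
-\mathsf{P}\tr\mathscr{W},\qquad n(p-1)\bigl[\tr\mathscr{W}^2+\langle\nabla\tr\mathscr{W},\mathsf{a}\rangle\bigr],\qquad -\mathsf{P}\,w_j\,\partial_{\xi_i\xi_j\xi_k}\mathscr{V}\,w_{ki},
\end{equation*}
each times $w^{1-n}$. This yields \eqref{eq:id-diff-anisot}.

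I expect the main obstacle to be only this index bookkeeping, in particular recognising the two contributions that recombine into $\langle\mathsf{A}\nabla\mathsf{P},\nabla w\rangle$, rather than any conceptual difficulty.
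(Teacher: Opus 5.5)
Your computation is correct: every term checks, including the recombination of $-\frac{n(p-1)}{w^{2}}\langle \mathsf{A}\nabla w,\nabla\mathscr{V}(\nabla w)\rangle$ and $\frac{\mathsf{P}}{w^{2}}\langle\mathsf{A}\nabla w,\nabla w\rangle$ into $-\frac{1}{w}\langle\mathsf{A}\nabla\mathsf{P},\nabla w\rangle$, which gives the coefficient $(2-n)-2=-n$. The paper gives no proof of its own and imports the identity from Proposition~3.1 of~\cite{cg-plap}; your direct chain-rule verification is the natural way to establish it, it indeed needs no homogeneity of $\mathscr{V}$, and your transposed convention $\mathscr{W}_{ik}=\partial_i\mathsf{a}_k$ is harmless because only $\tr\mathscr{W}$ and $\tr\mathscr{W}^2$ enter.
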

	
	Our next goal is to establish a fundamental inequality that will play a key role in proving the stability result. As a preliminary step, we first state a lemma on the regularity of the distributional divergence of a vector field.

	\begin{lemma}
	\label{lem:vect-field}
    Let~$Z\subseteq \R^n$ be such that~$\abs*{Z}=0$, and let~$X:\R^n\to \R^n$ be a vector field satisfying~$X\in L^2_{\loc}(\R^n)\cap C^1(\R^n\setminus Z)$, and 
    \begin{equation}
    \label{eq:uguale-g}
        \diver X = g \quad \text{on } \R^n\setminus Z,
    \end{equation}
    for some function~$g\in L^2_{\loc}(\R^n)$. Then,~$\diver X=g$ in the distributional sense and the divergence theorem holds, i.e.,
	\begin{equation}
	\label{eq:divergenza}
    	\int_{\R^n}\left(\diver X\right) \phi\,dx=-\int_{\R^n} \left\langle X , \nabla \phi \right\rangle dx,
	\end{equation}
	for all functions~$\phi\in W^{1,2}(\R^n)$ with compact support in~$\R^n$.
	\end{lemma}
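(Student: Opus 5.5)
The plan is to reduce everything to the classical divergence theorem on the open set $\R^n\setminus Z$, where $X$ is $C^1$, and then to remove $Z$ by a cut-off argument. The whole difficulty sits in that last step.

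First, by a density argument it suffices to prove~\eqref{eq:divergenza} for $\phi\in C^\infty_c(\R^n)$. Indeed, both sides are continuous with respect to $W^{1,2}$-convergence of test functions supported in a fixed compact set $K$, because $\diver X=g\in L^2(K)$ a.e.\ (here $|Z|=0$ is used) and $X\in L^2(K)$. Mollifying a compactly supported $\phi\in W^{1,2}(\R^n)$ then gives the general case. Moreover, the distributional identity $\diver X=g$ is precisely~\eqref{eq:divergenza} for smooth $\phi$. So the lemma reduces to a single statement: for every $\phi\in C^\infty_c(\R^n)$,
\begin{equation*}
\int_{\R^n} g\,\phi\,dx=-\int_{\R^n}\langle X,\nabla\phi\rangle\,dx .
\end{equation*}

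To prove this, I would pick cut-off functions $\eta_\varepsilon\in C^\infty(\R^n)$ with $0\le\eta_\varepsilon\le 1$. Each $\eta_\varepsilon$ should vanish in a neighbourhood $U_\varepsilon$ of $Z\cap\mathrm{supp}\,\phi$, and the family should satisfy $\eta_\varepsilon\to 1$ a.e.\ as $\varepsilon\to0$. Since $\eta_\varepsilon\phi$ is compactly supported in $\R^n\setminus Z$, where $X$ is $C^1$ and~\eqref{eq:uguale-g} holds pointwise, the classical divergence theorem gives
\begin{equation*}
\int g\,\eta_\varepsilon\phi\,dx=-\int\langle X,\nabla\phi\rangle\eta_\varepsilon\,dx-\int\langle X,\nabla\eta_\varepsilon\rangle\phi\,dx .
\end{equation*}
Because $|Z|=0$ and $g,X\in L^2_{\loc}$, dominated convergence sends the first two terms to the desired ones. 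What remains is the error term
\begin{equation*}
\Bigl|\int\langle X,\nabla\eta_\varepsilon\rangle\phi\,dx\Bigr|\le\|\phi\|_\infty\,\|X\|_{L^2(\mathrm{supp}\nabla\eta_\varepsilon)}\,\|\nabla\eta_\varepsilon\|_{L^2}.
\end{equation*}
The first norm is small, since by absolute continuity of the integral the support of $\nabla\eta_\varepsilon$ can be taken inside a set of small measure. The task is therefore to build $\eta_\varepsilon$ with $\|\nabla\eta_\varepsilon\|_{L^2}$ bounded uniformly in $\varepsilon$.

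This last point is the main obstacle, and I expect it to be genuinely delicate. Measure zero of $Z$ alone does not give uniformly bounded $L^2$-gradients of such cut-offs; that requires $Z$ to be small in the sense of $2$-capacity. Without some such control the boundary flux can survive: for $X=\mathrm{sign}(x)$ on $\R$ and $Z=\{0\}$, the hypotheses hold with $g=0$, yet $\diver X=2\delta_0$. So, as literally worded, the lemma cannot be proved by this route or any other without an extra hypothesis.

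To close the argument in the setting where the lemma is applied, I would use the extra structure available there. The relevant vector fields are built from $a(\nabla v)\in W^{1,2}_{\loc}$, $V(\nabla v)\in W^{1,2}_{\loc}$ and $P\in W^{1,2}_{\loc}$, so $X$ has a weak divergence in $L^1_{\loc}$ that coincides a.e.\ with its pointwise divergence off $\mathcal{Z}_v$. That makes the identity immediate, with no cut-off at all. The honest statement of the plan is therefore: prove the reduction above, and supply the missing ingredient either as a $W^{1,1}_{\loc}$ (or Sobolev-type) assumption on $X$ or as a capacity condition on $Z$. In the paper's application this ingredient is available from~\eqref{eq:regv-anisot} and~\eqref{eq:regP-anisot}.
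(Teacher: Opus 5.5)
Your proposal is correct: the lemma is false as stated. Your example $X=\mathrm{sign}(x)$, $Z=\{0\}$ satisfies every hypothesis with $g=0$, yet $\diver X=2\delta_0$. The field $X=\mathrm{sign}(x_1)e_1$ with $Z=\{x_1=0\}$ does the same in every dimension.

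The paper's proof breaks at the step you identified as the real difficulty. It mollifies and asserts $\diver X_\delta=g_\delta$ on all of $\R^n$ ``by the properties of convolutions and~\eqref{eq:uguale-g}''. But $\diver(X\ast\varphi_\delta)=(\diver X)\ast\varphi_\delta$ with the \emph{distributional} divergence, so this step assumes the conclusion; in your example $X_\delta'=2\varphi_\delta\neq 0=g_\delta$. The pointwise hypothesis only identifies $\diver X$ with $g$ on the open set $\R^n\setminus Z$. A singular part carried by $Z$ remains possible, and nothing in the hypotheses rules it out.

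For the repair, a cleaner minimal hypothesis than $W^{1,1}_{\loc}$ or a capacity condition is that the distributional divergence of $X$ be an $L^1_{\loc}$ function. Since $Z$ is closed (implicit in $X\in C^1(\R^n\setminus Z)$), $\diver X-g$ is then an $L^1_{\loc}$ function whose distribution vanishes on $\R^n\setminus Z$, hence it is zero a.e.

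Your claim that the ingredient is ``immediate'' in the application needs one more idea. For $A\nabla v=(p-1)\stressv$ it is immediate, straight from the weak form of~\eqref{eq:eqforv-anisot}. The other field, however, is $A\nabla V(\nabla v)=W\stressv$, and $W=\nabla\stressv$ is only known to be in $L^2_{\loc}$. So this field is not known to be in $W^{1,1}_{\loc}$ from~\eqref{eq:regv-anisot} and~\eqref{eq:regP-anisot}.

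What works is that derivatives commute in $\mathcal{D}'$. This gives $\partial_i W_{ik}=\partial_k\diver\stressv=\partial_k(P+R)\in L^2_{\loc}$ distributionally. Then a product rule against $\stressv\in W^{1,2}_{\loc}\cap L^\infty_{\loc}$, justified by mollifying $\stressv$ alone, yields $\diver(W\stressv)=\langle\nabla(P+R),\stressv\rangle+\tr(W^2)$ as distributions. This is a priori only in $L^1_{\loc}$, not the $L^2_{\loc}$ claimed in~\eqref{eq:L2-divergence}, so even a corrected $L^2$ version of the lemma would not apply. Still, $L^1_{\loc}$ suffices for testing against compactly supported functions in $W^{1,2}\cap L^\infty$ such as $P^t\phi$, which is all Proposition~\ref{prop:fund-ident-anisot-tbp} uses.
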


	\begin{proof}
    The proof readily follows via a standard density argument. We report the full argument for completeness. Let~$\{\varphi_\delta\}_{\delta}$ be the family of standard, radially symmetric convolution kernels, and let~$X_\delta \coloneqq X\ast \varphi_\delta$,~$g_\delta \coloneqq g\ast\varphi_\delta$ be the convolution of~$X$ and~$g$, respectively. In particular, by  the properties of convolutions and~\eqref{eq:uguale-g},~$X_\delta$,~$g_\delta$ are smooth functions,~$X_\delta\to X$ and~$g_\delta\to g$ in~$L^2_{\loc}(\R^n)$, and we have~$\diver X_\delta=g_\delta$ in~$\R^n$. It follows that, for a given function~$\phi\in C^\infty_c(\R^n)$, the divergence theorem and the hypothesis on~$Z$ yield
    \begin{equation*}
        \int_{\R^n} g_\delta\,\phi\,dx=\int_{\R^n\setminus Z} \left(\mathrm{div}\,X_\delta\right)\phi\,dx=\int_{\R^n} \left(\mathrm{div}\,X_\delta\right)\phi\,dx=-\int_{\R^n} \left\langle X_\delta , \nabla \phi \right\rangle dx,
    \end{equation*}
    so, letting $\delta\to 0$, we find
	\begin{equation*}
    	\int_{\R^n} g\,\phi\,dx=-\int_{\R^n}  \left\langle X , \nabla \phi \right\rangle dx,
	\end{equation*}
 	for all~$\phi\in C^\infty_c(\R^n)$, that is~$\diver X=g$ in the distributional sense. Finally,~\eqref{eq:divergenza} follows from the above identity and a standard density argument on~$\phi$.
\end{proof}

	We are now ready to prove the desired integral identity. The key idea is to show that the vector field~$A\nabla P$ fulfills~$\diver (A\nabla P)\in L^2_{\loc}(\R^n)$, and then the desired identity will follow by combining Lemma~\ref{lem:vect-field} with Proposition~\ref{prop:id-diff}.

	\begin{proposition}
	\label{prop:fund-ident-anisot-tbp}
		Let~$v$,~$P$,~$R$, and~$A$ be given by~\eqref{eq:defv-anisot},~\eqref{eq:defPfunct-anisot},~\eqref{eq:defRem-anisot}, and~\eqref{eq:def-A-anisot}, respectively.
		Then, it follows that
        \begin{equation}
        \label{eq:L2-divergence}
            A\nabla P\in L^2_{\loc}(\R^n)\quad \text{and}\quad \diver \!\left(A\nabla P \right) \in L^2_{\loc}(\R^n),
        \end{equation}
        where the divergence is interpreted in the distributional sense.
        Moreover, for any~$\phi\in C^\infty_{c}(B_r)$ and any~$t \in \R$, the following identity holds
		\begin{align}
		\notag
			-\int_{B_r} &v^{2-n} \left\langle A\nabla P, \nabla (P^t \phi) \right\rangle dx = \int_{B_r}\diver\!\left(v^{2-n}A\nabla P \right) P^t \phi \, dx \\
		\label{eq:fund-ineq-anisot}
			&=\int_{B_r}v^{1-n} \,\Bigl\{-n \left\langle A\nabla P, \nabla v \right\rangle -(p-1)\,P\, \tr W \\
			\notag
			&\hspace{0.89cm}+ n \left(p-1\right) \left[\tr[W]^2+ \left\langle \nabla (P+R),  a(\nabla v) \right\rangle \right]\Bigr\}\, P^t \phi \, dx.
		\end{align}
	\end{proposition}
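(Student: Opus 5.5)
The plan is to derive everything from the pointwise identity of Proposition~\ref{prop:id-diff}, applied away from the critical set, and then to pass to distributions across $\mathcal{Z}_v$ using Lemma~\ref{lem:vect-field}. On the open set $\R^n\setminus\mathcal{Z}_v$ we have $v\in C^{3,\beta}_{\loc}$ by~\eqref{eq:regv-anisot}, so we can apply Proposition~\ref{prop:id-diff} with $w=v$ and $\mathscr{V}=V$. The third-derivative term simplifies by homogeneity. Since $\partial_{\xi_i\xi_j}V$ is $(p-2)$-homogeneous, Euler's identity gives $\xi_j\partial_{\xi_i\xi_j\xi_k}V(\xi)=(p-2)\partial_{\xi_i\xi_k}V(\xi)$. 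Therefore
\[
-P\,v_j\,\partial_{\xi_i\xi_j\xi_k}V(\nabla v)\,v_{ki}=-(p-2)\,P\,\alpha_{ik}v_{ki}=-(p-2)\,P\tr W .
\]
Combining this with $-P\tr W$ yields $-(p-1)P\tr W$. By~\eqref{eq:trW-anisot}, $\tr W=P+R$, which turns $\langle\nabla(\tr W),a(\nabla v)\rangle$ into $\langle\nabla(P+R),a(\nabla v)\rangle$. Here $R$ is $C^{1,1}_{\loc}$ because $\kappa\in C^{1,1}_{\loc}$ and $v$ is $C^1$ and positive. This gives the pointwise identity for $\diver(v^{2-n}A\nabla P)$ on $\R^n\setminus\mathcal{Z}_v$. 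We then write $\diver(v^{2-n}A\nabla P)=v^{2-n}\diver(A\nabla P)+(2-n)v^{1-n}\langle A\nabla P,\nabla v\rangle$, which isolates $\diver(A\nabla P)$ there.

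The main obstacle is the integrability in~\eqref{eq:L2-divergence} near $\mathcal{Z}_v$, where $A$ may blow up ($p<2$) or degenerate ($p>2$). For $A\nabla P$ we use~\eqref{eq:gradP-anisot}: $\nabla P=\tfrac{n}{v}(W^\mathsf{T}-\tfrac{P}{n}\Id)\nabla v$. Together with~\eqref{eq:A-boundls} this gives
\[
|A\nabla P|\le C\,v^{-1}|\nabla v|^{p-1}\bigl(|W|+|P|\bigr).
\]
This is in $L^2_{\loc}$ because $W\in L^2_{\loc}$, $v$ is locally bounded below, and $\nabla v,P$ are locally bounded.

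For the divergence, each term in the pointwise identity should be $L^2_{\loc}$. The terms $v^{1-n}\langle A\nabla P,\nabla v\rangle$, $P\tr W$ and $\langle\nabla R,a(\nabla v)\rangle$ are clearly so, by the previous bound. The delicate terms are $\tr(W^2)$, which is only $L^1$ a priori, and $\langle\nabla P,a(\nabla v)\rangle$. For the latter, $|a(\nabla v)|\le C|\nabla v|^{p-1}$, so by~\eqref{eq:gradP-anisot} it is bounded by $C|\nabla v|^{p}(|W|+|P|)$, which lies in $L^2_{\loc}$. For $\tr(W^2)$ we would use that $W=\nabla a(\nabla v)$, with $a(\nabla v)\in W^{1,2}_{\loc}$ by~\eqref{eq:regv-anisot}. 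If $L^2$ for $\tr W^2$ is not directly available, we would first try a refinement of~\cite{carlos}: locally $a(\nabla v)\in W^{1,q}$ for some $q>2$, or $|W|\in L^4_{\loc}$ from the $C^{3,\beta}$ bound combined with the decay of $A$. Failing that, we would weaken Lemma~\ref{lem:vect-field} to $g\in L^1_{\loc}$ and test with $P^t\phi$, which is bounded with $W^{1,2}$ gradient; its convolution proof works verbatim for $L^1$ data against bounded Lipschitz-type test functions. In that version, $\tr W^2$ being $L^1_{\loc}$ suffices for the identity.

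With $X=v^{2-n}A\nabla P\in L^2_{\loc}\cap C^1(\R^n\setminus\mathcal{Z}_v)$ and $|\mathcal{Z}_v|=0$, Lemma~\ref{lem:vect-field} gives the distributional identity. Testing with $P^t\phi$, we observe that $P^t\phi\in W^{1,2}$ with compact support, because $P$ is locally bounded away from zero: $P\ge (p/(n-p))^{p-1}v^{-1}>0$, and $P\in W^{1,2}_{\loc}\cap C^{0,\alpha}$ by~\eqref{eq:regP-anisot}. This yields the first equality in~\eqref{eq:fund-ineq-anisot}, and substituting the pointwise identity on the full-measure set $\R^n\setminus\mathcal{Z}_v$ gives the second.
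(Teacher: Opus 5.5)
Your route is the paper's. You derive the pointwise identity from Proposition~\ref{prop:id-diff}, using Euler's relation for the third-derivative term and $\tr W=P+R$, and then use integrability and Lemma~\ref{lem:vect-field} to cross $\mathcal{Z}_v$. The paper bounds $A\nabla P$ by splitting it via \eqref{eq:grad-Puso} into multiples of $A\nabla v=(p-1)a(\nabla v)$ and $A\nabla[V(\nabla v)]=W a(\nabla v)$; your direct bound from \eqref{eq:gradP-anisot} and \eqref{eq:A-boundls} is equivalent.

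The obstruction you flag is genuine, and the paper's proof does not get past it either. It deduces $\diver(W a(\nabla v))\in L^2_{\loc}$ from \eqref{eq:uso-3} and $a(\nabla v)\in W^{1,2}_{\loc}\cap L^\infty_{\loc}$. But pointwise $\diver(Wa)=\langle\nabla(P+R),a\rangle+W_{ik}W_{ki}$, and the last term is your $\tr(W^2)$, which is only $L^1_{\loc}$. So the $L^2$ half of \eqref{eq:L2-divergence} is established neither by your proposal nor by the paper's argument as written. Your first two escape routes are not available. The $C^{3,\beta}$ bounds hold only locally in $\R^n\setminus\mathcal{Z}_v$ and degenerate at $\mathcal{Z}_v$, and no $W^{1,q}$ estimate with $q>2$ for $a(\nabla v)$ is at hand. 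The $L^1_{\loc}$ version is the right statement, and it is all that \eqref{eq:fund-ineq-anisot} and its later uses need.

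A second point, also shared with the paper: Lemma~\ref{lem:vect-field} is false as stated, so its $L^1$ variant cannot be obtained by the same convolution argument. The step $\diver X_\delta=g_\delta$ presupposes that the distributional divergence of $X$ has no part carried by $Z$. Pointwise information off a null set does not give this: take $X=\mathrm{sgn}(x_n)\,e_n$ and $Z=\{x_n=0\}$. What actually justifies crossing $\mathcal{Z}_v$ is the structure of the field. A.e.\ in $\R^n$,
\[
v^{2-n}A\nabla P=(p-1)\,v^{1-n}\bigl(n\,Wa-P\,a\bigr),\qquad a=a(\nabla v)\in W^{1,2}_{\loc}\cap L^\infty_{\loc},\quad W=\nabla a .
\]
The field $v^{1-n}Pa$ lies in $W^{1,2}_{\loc}$, and the $C^1$ factor $v^{1-n}$ in front of $Wa$ is harmless.

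For $Wa$, consider the $k$-th column $\partial_k a$ of $W$. Commuting distributional derivatives and using the weak equation \eqref{eq:eqforv-anisot} gives $\diver(\partial_k a)=\partial_k(\diver a)=\partial_k(P+R)$ in the sense of distributions. Since $\partial_k a\in L^2_{\loc}$ has $L^2_{\loc}$ divergence and $a_k\in W^{1,2}_{\loc}\cap L^\infty_{\loc}$, the product rule holds distributionally. It yields $\diver(Wa)=\sum_k\diver(a_k\,\partial_k a)=\langle\nabla(P+R),a\rangle+\tr(W^2)$.

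Hence the distributional divergence of $v^{2-n}A\nabla P$ is exactly your pointwise expression, which is an $L^1_{\loc}$ function. Test it against the bounded, compactly supported $W^{1,2}$ function $P^t\phi$: mollify, then use dominated convergence. This gives both equalities in \eqref{eq:fund-ineq-anisot}.
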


    \begin{proof}
        We start by proving~\eqref{eq:L2-divergence}. According to~\eqref{eq:grad-Puso} and the regularity properties in~\eqref{eq:regv-anisot} and~\eqref{eq:regP-anisot}, it suffices to prove that~$A\nabla v$,~$A \nabla V(\nabla v)$, and their divergences are equal to an~$L^2$-function on the set~$\R^n\setminus \mathcal{Z}_v$.
		We remark that, on such a set, all subsequent computations hold pointwise, by virtue of the regularity of~$H$ and of~$v$ in~\eqref{eq:regv-anisot}.
        
        By the homogeneity properties of~$H$, for all~$x\in \R^n\setminus \mathcal{Z}_v$, we have
        \begin{equation}
        \label{eq:uso-1}
            A\nabla v=\frac{1}{p} \,\nabla^2_\xi H^p(\nabla v)\,\nabla v=(p-1)\,\frac{1}{p} \,\nabla_\xi H^p(\nabla v)=(p-1)\, \stressv,
        \end{equation}
        and, for~$i=1,\dots,n$, we have
        \begin{equation}
        \label{eq:uso-2}
          \begin{split} 
          	\left(\nabla  V(\nabla v)\right)_{i} &=\frac{1}{p^2} \,\partial_{\xi_i\xi_j} H^p(\nabla v)\,\partial_{x_j} H^p(\nabla v) =\frac{1}{p^2} \, \partial_{\xi_i\xi_j} H^p(\nabla v)\,\partial_{\xi_k}H^p(\nabla v)\,v_{jk}	\\
          	&=\frac{1}{p^2}\,\partial_{x_k}\!\left(\partial_{\xi_i}H^p(\nabla v) \right) \partial_{\xi_k} H^p(\nabla v) =\left(W \stressv\right)_{i}.
          \end{split}
        \end{equation}
		Recalling that~$\abs*{\mathcal{Z}_v}=0$,~$W\in L^2_{\loc}(\R^n)$, and~$a(\nabla v)\in L^\infty_{\loc}(\R^n)$ due to~\eqref{eq:regv-anisot}, we thus have that~$A\nabla v$ and~$A\nabla V(\nabla v)$ are in~$L^2_{\loc}(\R^n)$, and so is~$A\nabla P$ by~\eqref{eq:grad-Puso} and the previous discussion.
		
		Let us now study their pointwise divergence on~$\R^n\setminus \mathcal{Z}_v$. Thanks to~\eqref{eq:uso-1} and the equation~\eqref{eq:eqforv-anisot}
		\begin{equation*}
	    	\diver\!\left(A\nabla v \right)=(p-1) \diver\!\left(\stressv\right)=(p-1)\left(P+R\right)
		\end{equation*}
    	on~$\R^n\setminus \mathcal{Z}_v$. Owing to~\eqref{eq:defRem-anisot},~\eqref{eq:regv-anisot}, and the regularity of~$\kappa$, the right hand-side in the above equation belongs to~$L^\infty_{\loc}(\R^n)$. Hence, from Lemma~\ref{lem:vect-field} we deduce that~$\diver\!\left(A\nabla v\right)\in L^2_{\loc}(\R^n)$ in the distributional sense as well.
    
		Next, on~$\R^n\setminus \mathcal{Z}_v$, for all~$k=1,\dots,n$, by Schwarz's theorem and equation~\eqref{eq:eqforv-anisot}, we have
		\begin{equation}
		\label{eq:uso-3}
    		\begin{split}
			\partial_{x_i} W_{ik} &= \frac{1}{p} \, \partial_{x_i}\!\left(\partial_{x_k}\left(\partial_{\xi_i} H^p(\nabla v) \right) \right) = \frac{1}{p} \, \partial_{x_k} \!\left(\partial_{x_i}\!\left(\partial_{\xi_i} H^p(\nabla v) \right) \right) \\
			&=\partial_{x_k}\diver\!\left(a(\nabla v)\right)=\partial_{x_k}\!\left( P+R\right) \in L^2_{\loc}(\R^n)
    	\end{split}
		\end{equation}
    	the latter inclusion stemming from~\eqref{eq:regv-anisot},~\eqref{eq:regP-anisot}, and the regularity of~$R$. As~$\stressv \in W^{1,2}_{\loc}(\R^n)\cap L^\infty_{\loc}(\R^n)$ thanks to~\eqref{eq:regv-anisot}, from \eqref{eq:uso-2}, \eqref{eq:uso-3} and, Lemma~\ref{lem:vect-field}, we deduce that~$\diver\!\left(A\nabla V\right)\in L^2_{\loc}(\R^n)$ in the distributional sense, and this establishes~\eqref{eq:L2-divergence} by the discussion at the beginning of the proof. \newline
    	
		Next, by the homogeneity of~$H$ and the chain rule for derivatives, we have
		\begin{equation}
		\label{eq:uso-4}
    		\begin{split}
        	\partial_{\xi_i\xi_j\xi_k} V(\nabla v)\,v_j v_{ki}=(p-2)\,\partial_{\xi_i\xi_k} V(\nabla v)\,v_{ki} = (p-2) \diver\!\left(\stressv\right) \!.
    		\end{split}
		\end{equation}
		Therefore, coupling~\eqref{eq:id-diff-anisot} with~\eqref{eq:trW-anisot} and~\eqref{eq:uso-4}, on~$\R^n\setminus \mathcal{Z}_v$, we have
		\begin{equation*}
			\begin{split}
    			\diver\!\left(v^{2-n}A\nabla P \right) &= v^{1-n} \,\Bigl\{-n \left\langle A\nabla P, \nabla v \right\rangle -(p-1)\,P\, \tr W \\
				&\quad+ n \left(p-1\right) \left[\tr[W]^2+ \left\langle \nabla (P+R),  a(\nabla v) \right\rangle \right]\Bigr\}
    		\end{split}
		\end{equation*}
    	Multiplying the above identity by~$P^t \phi$, recalling that $\abs*{\mathcal{Z}_v}=0$, and using the distributional divergence theorem finally yield~\eqref{eq:fund-ineq-anisot}.
    \end{proof}
    
	By means of the homogeneity of~$H$, and a few algebraic manipulations, we now deduce the following fundamental inequality.
    
	\begin{proposition}
	\label{prop:fund-ineq-anisot}
		Let~$v$ be a weak solution of~\eqref{eq:eqforv-anisot}. Then for every non-negative~$\phi \in C^\infty_c(\R^n)$ such that~$\mathrm{supp}\,\phi \subseteq \overline{B_{r}}$ and any~$t \in \R$, we have
		\begin{equation}
		\label{eq:fund-ineq-anisot-2}
			\begin{split}
				-\int_{B_{r}} &v^{2-n} \left\langle A\nabla P, \nabla (P^t \phi) \right\rangle dx \geq n \left(p-1\right) \left(1-c_{p,H}\right) \int_{B_{r}} v^{1-n} \,\abs{\mathring{W}}^2 \, P^t \phi \, dx \\
				&+ \left(p-1\right) \int_{B_{r}} v^{1-n} \,\bigl\{n \left\langle \nabla R, \stressv \right\rangle + R \left(P+R\right) \bigr\}\, P^t \phi \, dx,
			\end{split}
		\end{equation}
		where~$c_{p,H} \in [0,1)$ is defined as
		\begin{equation}
		\label{eq:cph}
			c_{p,H} \coloneqq \frac{\left(1-\varrho_{p,H}\right)^2}{1+\varrho_{p,H}^2} \quad\text{with}\quad \varrho_{p,H} \coloneqq \frac{\lambda_H}{\Lambda_H} \left(p-1\right)^{\mathrm{sgn}(2-p)}\!.
		\end{equation}
	\end{proposition}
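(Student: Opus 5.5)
The plan is to start from the identity \eqref{eq:fund-ineq-anisot} of Proposition~\ref{prop:fund-ident-anisot-tbp} and simplify its integrand pointwise on $\R^n\setminus\mathcal{Z}_v$, which has full measure. Since $\phi\geq 0$ and $P>0$, the weight $v^{1-n}P^t\phi$ is non-negative. It therefore suffices to show, a.e., that
\begin{multline*}
-n\langle A\nabla P,\nabla v\rangle-(p-1)P\tr W+n(p-1)\bigl[\tr(W^2)+\langle\nabla(P+R),\stressv\rangle\bigr]\\
\geq n(p-1)(1-c_{p,H})\,\abs{\mathring W}^2+(p-1)\bigl\{n\langle\nabla R,\stressv\rangle+R(P+R)\bigr\}.
\end{multline*}
Here I read $\tr[W]^2$ as $\tr(W^2)$.

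\textbf{Step 1: cancellation of the $\nabla P$ terms.} $A$ is symmetric, and \eqref{eq:uso-1} gives $A\nabla v=(p-1)\stressv$. Hence
\[
\langle A\nabla P,\nabla v\rangle=\langle\nabla P,A\nabla v\rangle=(p-1)\langle\nabla P,\stressv\rangle .
\]
This term cancels exactly against $n(p-1)\langle\nabla P,\stressv\rangle$, leaving only $n(p-1)\langle\nabla R,\stressv\rangle$.

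\textbf{Step 2: the trace terms.} Write $W=\mathring W+\frac{\tr W}{n}\Id$. Since $\tr\mathring W=0$, this gives $\tr(W^2)=\tr(\mathring W^2)+\frac{(\tr W)^2}{n}$. By \eqref{eq:trW-anisot}, $\tr W=P+R$. Therefore
\[
-(p-1)P(P+R)+(p-1)(P+R)^2=(p-1)R(P+R),
\]
and the remaining terms are $n(p-1)\tr(\mathring W^2)$ together with exactly the $R$-terms in \eqref{eq:fund-ineq-anisot-2}. Everything is thus reduced to the pointwise matrix inequality
\[
\tr(\mathring W^2)\geq(1-c_{p,H})\,\abs{\mathring W}^2 .
\]

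\textbf{Step 3 (main obstacle): the matrix inequality.} The difficulty is that $W=A\,D^2v$ is not symmetric. Diagonalize $A=\mathrm{diag}(a_1,\dots,a_n)$ with $a_i>0$ by an orthogonal change of basis; both traces and the Frobenius norm are invariant under it. Set $S=D^2v$ in this basis and $\mu=\tr W/n$, so that $\mathring W_{ij}=a_iS_{ij}-\mu\delta_{ij}$. Then
\begin{align*}
\tr(\mathring W^2)&=\sum_i\mathring W_{ii}^2+2\sum_{i<j}a_ia_jS_{ij}^2,\\
\abs{\mathring W}^2&=\sum_i\mathring W_{ii}^2+\sum_{i<j}(a_i^2+a_j^2)S_{ij}^2 .
\end{align*}
It suffices that $2a_ia_j\geq(1-c_{p,H})(a_i^2+a_j^2)$ for each pair. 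With $r=a_j/a_i\leq1$ (after relabeling), this reads $\frac{2r}{1+r^2}\geq 1-c_{p,H}$. One checks the identity
\[
1-\frac{(1-r)^2}{1+r^2}=\frac{2r}{1+r^2},
\]
and $r\mapsto\frac{2r}{1+r^2}$ is increasing on $(0,1]$. So it is enough that every eigenvalue ratio of $A=\nabla^2_\xi V(\nabla v)$ is at least $\varrho_{p,H}$.

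That eigenvalue bound is the point I would verify with care. I would decompose $\nabla^2_\xi V=H^{p-2}\bigl[(p-1)\nabla H\otimes\nabla H+H\nabla^2H\bigr]$ and combine it with $\frac12\nabla^2H^2=\nabla H\otimes\nabla H+H\nabla^2H$ and \eqref{eq:ellip-norm}. This yields, up to the factor $H^{p-2}$, the bounds
\[
\min\{1,p-1\}\lambda_H\leq \text{eigenvalues}\leq\max\{1,p-1\}\Lambda_H ,
\]
so the ratio is at least $\frac{\lambda_H}{\Lambda_H}(p-1)^{\mathrm{sgn}(2-p)}=\varrho_{p,H}$. This is consistent with Lemma~3.2 of~\cite{carlos+big4}. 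Finally, multiply the pointwise inequality by $v^{1-n}P^t\phi\geq0$, integrate, and use \eqref{eq:fund-ineq-anisot}.
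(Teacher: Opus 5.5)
Your proposal is correct and is essentially the paper's proof. It uses the same cancellation via $\langle A\nabla P,\nabla v\rangle=(p-1)\langle\nabla P,\stressv\rangle$ and the same trace bookkeeping with $\tr W=P+R$; the only difference is that the paper cites Lemma~3.2 of~\cite{carlos+big4} for the eigenvalue-ratio bound $\varrho_{p,H}$ and Lemma~3.2 of~\cite{cg-plap} for $\tr(W^2)\geq\abs{W}^2-c_{p,H}\abs{\mathring W}^2$ (equivalent to your $\tr(\mathring W^2)\geq(1-c_{p,H})\abs{\mathring W}^2$), whereas your diagonalization argument and Hessian decomposition prove these directly. In your eigenvalue bound, the lower estimate for $p<2$ and the upper one for $p>2$ are most easily obtained by also invoking $\nabla^2 H\geq 0$ (convexity of the norm $H$), which is worth stating explicitly.
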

	\begin{proof}
		We first observe that, for~$A$ given by~\eqref{eq:def-A-anisot}, an application of Lemma~3.2 in~\cite{carlos+big4} yields
		\begin{equation*}
    		\frac{\lambda_{\mathrm{min}}(A(x))}{\lambda_{\mathrm{max}}(A(x))} \geq \frac{\lambda_H}{\Lambda_H} \frac{\min\left\{1,p-1\right\}}{\max\left\{1,p-1\right\}}\eqqcolon \varrho_{p,H} \quad \text{on } \R^n\setminus \mathcal{Z}_v.
		\end{equation*}
		Therefore, applying Lemma~3.2 in~\cite{cg-plap} to~$W = A D^2v$, we obtain
    	\begin{equation}
		\label{eq:trW2-anisot}
			\tr[W]^2 \geq \abs*{W}^2 - c_{p,H} \,\abs{\mathring{W}}^2.
		\end{equation}
    where~$c_{p,H}$ is defined by~\eqref{eq:cph}. Now, using~\eqref{eq:tracless-W-anisot} and \eqref{eq:trW-anisot}, we notice that
		\begin{equation}
		\label{eq:norma-Wep-senzatr-anisot}
			\abs{\mathring{W}}^2 = \abs{W}^2 - \frac{1}{n} \left(\tr W\right)^2 = \abs{W}^2 - \frac{1}{n} \left(P+R\right)^2 \quad \text{on } \R^n\setminus \mathcal{Z}_v
		\end{equation}
		so combining~\eqref{eq:trW2-anisot} and~\eqref{eq:norma-Wep-senzatr-anisot}, we get
		\begin{equation}
			\label{eq:trW2-final-anisot}
			\tr[W]^2 \geq \left(1-c_{p,H}\right) \abs {\mathring{W}}^2 + \frac{1}{n} \left(P+R\right)^2 \quad \text{on $\R^n\setminus \mathcal{Z}_v$.}
		\end{equation}
		Then, by the homogeneity of~$H$, and recalling the definition of~$A$ in~\eqref{eq:def-A-anisot}, we deduce
    	\begin{equation}
    	\label{eq:uso-5}
    		\begin{split}
        		\left\langle A\nabla P,\nabla v\right\rangle &= \frac{1}{p} \, \partial_{\xi_i\xi_j} H^p(\nabla v)\,\partial_{x_j} P \,\partial_{x_i}v = \left(p-1\right)\frac{1}{p}\,\partial_{\xi_j} H^p(\nabla v)\,\partial_{x_j} P \\
        		&=(p-1)\left\langle \nabla P,a(\nabla v)\right\rangle\!.
        	\end{split}
    	\end{equation}
		Substituting~\eqref{eq:trW-anisot},~\eqref{eq:trW2-final-anisot}, and~\eqref{eq:uso-5} into~\eqref{eq:fund-ineq-anisot},  we obtain
		\begin{align*}
			-&\int_{B_{r}} v^{2-n} \left\langle A\nabla P, \nabla (P^t \phi) \right\rangle dx = \int_{B_r} v^{1-n} \,\Bigl\{-(p-1)\,P\,(P+R)+n(p-1)\,\mathrm{tr}[W]^2 \\
			&\hspace{50pt}\quad +n(p-1)\left\langle \nabla R,a(\nabla v)\right\rangle\Bigr\} P^t\phi\,dx
			\\
			&\hspace{50pt}=\int_{B_r} v^{1-n} \,\Bigl\{-(p-1)\,(P+R)^2+(p-1)\,R(R+P) +n(p-1)\,\mathrm{tr}[W]^2
			\\
			&\hspace{50pt}\quad
			+n(p-1)\left\langle \nabla R,a(\nabla v)\right\rangle\Bigr\} P^t\phi \, dx
			\\
			&\hspace{50pt}\geq\int_{B_r} v^{1-n} \,\Bigl\{n(p-1)(1-c_{p,H})\,|\mathring{W}|^2+(p-1)\,R(R+P)
			\\
			&\hspace{50pt}\quad +n(p-1)\left\langle \nabla R,a(\nabla v)\right\rangle\Bigr\} P^t \phi \, dx,
		\end{align*}
		which is our thesis.
	\end{proof}
	
	
	\section{Anisotropic classification result}
	\label{sec:anis-classif-energ}
	
	In this brief section, we exploit the integral estimate of Proposition~\ref{prop:fund-ineq-anisot} to prove the classification result of Theorem~\ref{th:calssif-anisot}. Although the result has already been shown in~\cite{cfr}, we decided to include a very short proof here to emphasize the strength of the~$P$-function approach.
	
	In the following calculations,~$C>0$ will denote a constant independent of~$r$, which may vary from line to line.
	
	As~$u>0$, we consider the function~$v$ given by~\eqref{eq:defv-anisot}, which is a weak solution of~\eqref{eq:eqforv-anisot}. We first observe that
	\begin{equation}
		\label{eq:D1p-anisot}
		u \in \mathcal{D}^{1,p}(\R^n) \quad\text{if and only if}\quad \int_{\R^n} v^{-n} \, dx +  \int_{\R^n} v^{-n} \,\abs*{\nabla v}^p \, dx < +\infty.
	\end{equation}
	Assume that~$\kappa \equiv 1$, so that the reminder term defined in~\eqref{eq:defRem-anisot} vanishes. Moreover, we set~$A_r \coloneqq B_{2r} \setminus B_r$ and let~$\phi \in C^\infty_c(\R^n)$ be a cut-off function such that~$0 \leq \phi \leq 1$, with~$\phi=1$ in~$B_{r}$,~$\phi=0$ in~$\R^n \setminus B_{2r}$, and~$\abs*{\nabla \phi} \leq C/r$ in~$A_{r}$.
	
	Choosing~$t=1$ and replacing~$\phi$ with~$\phi^2$ in~\eqref{eq:fund-ineq-anisot-2} yields
	\begin{equation}
	\label{eq:clas-1-rew}
		-\int_{B_{2r}} v^{2-n} \left\langle A\nabla P, \nabla \!\left(P \phi^2\right) \right\rangle dx \geq \mathsf{c}_\sharp \int_{B_{2r}} v^{1-n} \,\abs{\mathring{W}}^2 \, P \phi^2 \, dx,
	\end{equation}
	with~$\mathsf{c}_\sharp \coloneqq n \left(p-1\right) \left(1-c_{p,H}\right)$. In addition, we observe that by~\eqref{eq:A-boundls} and since~$\abs*{\mathcal{Z}_v}=0$, we have
	\begin{equation*}
		\left\langle A\nabla P, \nabla P \right\rangle \geq 0\quad \text{a.e. in } \R^n,
	\end{equation*}
	and therefore~\eqref{eq:clas-1-rew} simplifies to
	\begin{equation}
	\label{eq:clas-2-rew}
		\mathsf{c}_\sharp \int_{B_{2r}} v^{1-n} \,\abs{\mathring{W}}^2 P \phi^2 \, dx \leq - 2\int_{A_{r}} v^{2-n} \left\langle A\nabla P, \nabla \phi \right\rangle  P \phi \, dx.
	\end{equation}
	Now, exploiting~\eqref{eq:A-boundls},~\eqref{eq:gradP-anisot}, and Young's inequality, we get
	\begin{equation*}
		\abs*{2 v^{2-n} \left\langle A\nabla P, \nabla \phi \right\rangle  P \phi} \leq \frac{\mathsf{c}_\sharp}{2} \, v^{1-n} \,\abs{\mathring{W}}^2 \, P \phi^2 + C v^{1-n} \,\abs*{\nabla v}^{2(p-1)} \,\abs*{\nabla\phi}^2 P.
	\end{equation*}
	Piecing this with~\eqref{eq:clas-2-rew}, we finally obtain
	\begin{equation}\label{chiamo}
		\int_{B_{r}} v^{1-n} \,\abs{\mathring{W}}^2 P \, dx \leq C \int_{A_{r}} v^{1-n} \,\abs*{\nabla v}^{2(p-1)} \,\abs*{\nabla\phi}^2 P \, dx.
	\end{equation}
	
	Taking advantage of the gradient estimate~\eqref{eq:est-vetois} and of the definition of~$v$ in~\eqref{eq:defv-anisot}, we easily see that
	\begin{equation*}
		\abs*{\nabla v(x)} \leq C \left(1+\abs*{x}^\frac{1}{p-1}\right) \quad\text{for every } x \in \R^n,
	\end{equation*}
	and therefore, using~$|\nabla \phi|\leq C/r$,~\eqref{eq:equiv-norma}, and~\eqref{eq:defPfunct-anisot}, from~\eqref{chiamo} we deduce
	\begin{equation}
	\label{eq:limit-class}
		\int_{B_{r}} v^{1-n} \,\abs{\mathring{W}}^2 P \, dx \leq C \int_{A_{r}} v^{1-n} P \, dx \leq C \int_{A_{r}} v^{-n} + v^{-n} \,\abs*{\nabla v}^p \, dx
	\end{equation}
	for~$r>0$ sufficiently large. Hence, in view of~\eqref{eq:D1p-anisot}, letting~$r \to +\infty$ in~\eqref{eq:limit-class} yields
	\begin{equation*}
		\int_{\R^n} v^{1-n} \,\abs{\mathring{W}}^2 P \, dx = 0.
	\end{equation*}
	This implies that~$\mathring{W} = 0$ a.e.\ in~$\R^n$, so that from \eqref{eq:gradP-anisot} it follows that
	\begin{equation*}
		\nabla P = 0 \quad\text{and}\quad \nabla \stressv = \frac{P}{n} \Id \quad\text{a.e.\ in } \R^n.
	\end{equation*}
    By~\eqref{eq:regP-anisot}, this yields
	\begin{equation*}
		\stressv(x) = \frac{P}{n} \left(x-z\right) \quad\text{for some } z \in \R^n,
	\end{equation*}
	where~$P(x)=P$ is a constant. By definition of~$a(\xi)$, this entails~$v(x) = c_1 + c_2 H_0^{\frac{p}{p-1}}(x-z)$. Thus, from~\eqref{eq:defv-anisot}, and since~$u$ is a solution to~\eqref{eq:critica-anisot}, we conclude that~$u$ must be of the form~\eqref{eq:pH-bubb} for some~$\lambda>0$ and~$z \in \R^n$, which is our thesis.
	
	
	\section{Anisotropic stability result}
	\label{sec:anis-stab}
	
	This section is completely devoted to the proof of Theorem~\ref{th:main-th-anisot-stab}. Of course, it suffices to prove it under the assumption that
	\begin{equation}
	\label{eq:def-small-anisot}
		\defi(u,\kappa) \leq \gamma,
	\end{equation}
	for some~$\gamma \in (0,1)$, depending only on~$n$,~$p$,~$H$, and~$\norma*{\kappa}_{L^\infty(\R^n)}$. Indeed, if~$\defi(u,\kappa) > \gamma$, then it follows that
	\begin{equation*}
		\norma*{u-\mathcal{U}_0}_{\mathcal{D}^{1,p}(\R^n)} \leq \norma*{\nabla u}_{L^{p}(\R^n)} + \norma*{\nabla \mathcal{U}_0}_{L^{p}(\R^n)} \leq 4 c_H^{-1} S_p^{\frac{n}{p}} \leq \frac{4 S_p^{\frac{n}{p}}}{c_H \,\gamma} \defi(u,\kappa)
	\end{equation*}
	for any~$(p,H)$-bubble~$\mathcal{U}_0$, where we used~\eqref{eq:energ-bubb-anisot},~\eqref{eq:equiv-norma}, and~\eqref{eq:ipotesi-energ-anisot}. Therefore, in what follows, we will assume that~\eqref{eq:def-small-anisot} is in force. \newline
	
	Following~\cite{cg-plap}, we divide the proof into several steps.
	
	\renewcommand\thesubsection{\bfseries Step \arabic{subsection}}
	
	
	\subsection{Application of the Struwe-type result and reduction.}
	\label{step:struwe-reduction}
	
	For any~$\epsilon>0$, by the Struwe-type result of Theorem~\ref{th:struwe-def} and considering~\eqref{eq:ipotesi-energ-anisot}, there exits a~$\delta>0$, depending on~$\epsilon$, and a $(p,H)$-bubble~$U_\epsilon \coloneqq U_p[z_\epsilon,\lambda_\epsilon]$, for a couple of parameters~$z_\epsilon \in \R^n$ and~$\lambda_\epsilon>0$, such that if
	\begin{equation}
		\label{eq:delta-to-fix-anisot}
		\defi(u,\kappa) \leq \delta,
	\end{equation}
	then
	\begin{equation}
		\label{eq:struwe-0-anisot}
		\norma*{\nabla \!\left(u-U_\epsilon\right)}_{\mathcal{D}^{1,p}(\R^n)} \leq \epsilon.
	\end{equation}
	We now scale and translate our functions in order to lead~$U_\epsilon$ to be the~$(p,H)$-bubble with center the origin and unit scale factor. Taking into account point~\ref{it:transf-unitbub-anisot} of Lemma~\ref{lem:sym-anisot}, we set
	\begin{equation*}
		U \coloneqq U_p[0,1]=T_{-z_\epsilon/\lambda_\epsilon,\lambda_\epsilon} \!\left(U_\epsilon\right)
	\end{equation*}
	and define accordingly
	\begin{equation*}
		u_\epsilon \coloneqq T_{-z_\epsilon/\lambda_\epsilon,\lambda_\epsilon} \!\left(u\right)\!.
	\end{equation*}
	Therefore, by point~\ref{it:cons-normap-anisot} of Lemma~\ref{lem:sym-anisot} and~\eqref{eq:struwe-0-anisot}, it follows that
	\begin{equation*}
	\label{eq:struwe-anisot}
		\norma*{\nabla \!\left(u_\epsilon-U\right)}_{\mathcal{D}^{1,p}(\R^n)} \leq \epsilon.
	\end{equation*}
	As noticed above,~$u_\epsilon$ is a weak solution to~\eqref{eq:maineq-bubb-anisot} for a different~$\kappa$ satisfying~\eqref{eq:inv-norma-anisot}--\eqref{eq:inv-k-anisot} and~$u_\epsilon$ enjoys the regularity properties listed in~\eqref{eq:regu-anisot}.
	
	Since, according to points~\ref{it:cons-normapas-anisot}--\ref{it:cons-normap-anisot} of Lemma~\ref{lem:sym-anisot} and~\eqref{eq:inv-norma-anisot}--\eqref{eq:inv-k-anisot}, all the relevant quantities are invariant by scaling, we will omit the subscript in what follows -- writing therefore~$u$ instead of~$u_\epsilon$. The subscript will be restored when needed in~\ref{step:conclusion-anisot}.
	
	
	\subsection{Decay and other preliminary estimates.}
	\label{step:decay-anisot}

	We claim that, for~$\epsilon \in (0,1)$, depending only on~$n$,~$p$,~$H$, and~$\norma*{\kappa}_{L^\infty(\R^n)}$, there exists a point~$x_0 \in \R^n$ where~$u$ attains its maximum and
	\begin{equation}
		\label{eq:bound-Linf-univ}
		\norma*{u}_{L^\infty(\R^n)} = u(x_0) \leq \mathscr{M}
	\end{equation}
	for some~$\mathscr{M} \geq 1$ depending only on~$n$,~$p$,~$H$, and~$\norma*{\kappa}_{L^\infty(\R^n)}$. Moreover, there exist two constants~$c_0,C_0>0$, depending on the same quantities, such that
	\begin{equation}
		\label{eq:bounds-u-anisot}
		\frac{c_0}{1+\abs*{x}^{\frac{n-p}{p-1}}} \leq u(x) \leq \frac{C_0}{1+\abs*{x}^\frac{n-p}{p-1}} \quad\text{for every } x \in \R^n.
	\end{equation}
	Finally, there exists a constant~$C_1 \geq 1$, depending only on~$n$,~$p$,~$H$, and~$\norma*{\kappa}_{L^\infty(\R^n)}$, such that
	\begin{equation}
		\label{eq:bound-gradu-anisot}
		\abs*{\nabla u(x)} \leq \frac{C_1}{1+\abs*{x}^\frac{n-1}{p-1}} \quad\text{for every } x \in \R^n.
	\end{equation}
	The proofs of~\eqref{eq:bound-Linf-univ}--\eqref{eq:bound-gradu-anisot} go as in the corresponding step of the proof of Theorem~1.1 in~\cite{cg-plap}, with minor modification due to presence of $H$, and is therefore omitted.
	
	We also note that, by virtue of~\eqref{eq:equiv-norma}, we can replace the Euclidean norm with~$H$ in~\eqref{eq:bounds-u-anisot}--\eqref{eq:bound-gradu-anisot}, obtaining the validity of these estimates for possibly different constants, depending only on~$n$,~$p$,~$H$, and~$\norma*{\kappa}_{L^\infty(\R^n)}$.
	
	As~$u>0$, we now consider the function~$v$ given by~\eqref{eq:defv-anisot}, which is a weak solution of~\eqref{eq:eqforv-anisot}. From the decay estimates~\eqref{eq:bounds-u-anisot}, we deduce
	\begin{equation}
		\label{eq:bounds-v-anisot}
		\hat{c}_0 \left(1+\abs*{x}^\frac{p}{p-1}\right) \leq v(x) \leq \widehat{C}_0 \left(1+\abs*{x}^\frac{p}{p-1}\right) \quad\text{for every } x \in \R^n
	\end{equation}
	for a couple of constants~$\hat{c}_0,\widehat{C}_0>0$ depending only on~$n$,~$p$,~$H$, and~$\norma*{\kappa}_{L^\infty(\R^n)}$. Moreover, by~\eqref{eq:bound-gradu-anisot} and~\eqref{eq:bounds-v-anisot}, we get
	\begin{equation}
		\label{eq:bound-gradv-anisot}
		\abs*{\nabla v(x)} \leq \widehat{C}_1 \left(1+\abs*{x}^\frac{1}{p-1}\right) \quad\text{for every } x \in \R^n,
	\end{equation}
	for some~$\widehat{C}_1 \geq 1$ depending only on~$n$,~$p$,~$H$, and~$\norma*{\kappa}_{L^\infty(\R^n)}$. Taking advantage of of~\eqref{eq:equiv-norma},~\eqref{eq:bounds-v-anisot}, and~\eqref{eq:bound-gradv-anisot}, it follows that~$v \geq \hat{c}_0$ and
	\begin{equation*}
		\label{eq:bound-fgrad}
		v^{-1} H^p(\nabla v) \in L^\infty(\R^n) \quad\text{with}\quad v^{-1} H^p(\nabla v) \leq C_H^p \, \widehat{C}_1^p \,\hat{c}_0^{-1},
	\end{equation*}
	thus
	\begin{gather}
	\label{eq:P-Linf-anisot}
		P \in L^\infty(\R^n) \quad\text{with}\quad \norma*{P}_{L^\infty(\R^n)} \leq n \,\frac{p-1}{p} \,  C_H^p \, \widehat{C}_1^p \,\hat{c}_0^{-1} + \left(\frac{p}{n-p}\right)^{\! p-1} \hat{c}_0^{-1}, \\
	\label{eq:R-Linf-anisot}
		R \in L^\infty(\R^n) \quad\text{with}\quad \norma*{R}_{L^\infty(\R^n)} \leq \left(\frac{p}{n-p}\right)^{\! p-1} \hat{c}_0^{-1} \,\norma*{\kappa-1}_{L^\infty(\R^n)}.
	\end{gather}
	
	
	\subsection{Quantitative integral estimate.}
	\label{step:quant-est-anisot}
	
	In this step, we shall derive a quantitative estimate for an integral involving~$\abs{\mathring{W}}$, the traceless tensor defined in~\eqref{eq:tracless-W-anisot}, by means of Proposition~\ref{prop:fund-ineq-anisot}. Particularly, for any~$t \geq 1$, we aim to show the following estimate:
	\begin{equation}
	\label{eq:fund-est-anisot-def}
		\int_{\R^n} v^{1-n} \,\abs{\mathring{W}}^2 P^t \, dx + \int_{\R^n} v^{2-n}\,\abs*{\nabla v}^{p-2} P^{t-1} \,\abs*{\nabla P}^{2} \, dx \leq C_\sharp \defi(u,\kappa),
	\end{equation}
	for some constant~$C_\sharp>0$ depending only on~$n$,~$p$,~$H$,~$\norma*{\kappa}_{L^\infty(\R^n)}$, and~$t$.
	
	Let~$\phi \in C^\infty_c(\R^n)$ be a cut-off function as in Section~\ref{sec:anis-classif-energ}. By using~$\phi^2$ in~\eqref{eq:fund-ineq-anisot} as a test function, we obtain
	\begin{multline}
	\label{eq:I-toest}
		n \left(p-1\right) \left(1-c_{p,H}\right) \int_{B_{2r}} v^{1-n} \,\abs{\mathring{W}}^2 \, P^t \phi \, dx + t \int_{B_{2r}} v^{2-n} P^{t-1} \left\langle A\nabla P, \nabla P \right\rangle \phi^2 \, dx \\
		\leq \mathsf{I}^r_1 + \mathsf{I}^r_2 + \mathsf{I}^r_3,
	\end{multline}
	where $A_r=B_{2r}\setminus B_r$, and
	\begin{align*}
		\mathsf{I}_1^r &\coloneqq - 2\int_{A_{r}} v^{2-n} \left\langle A\nabla P, \nabla \phi \right\rangle  P^t \phi \, dx, \\
		\mathsf{I}_2^r &\coloneqq \left(1-p\right) \int_{B_{2r}} v^{1-n} \left(R^2+PR\right) P^t \phi^2 \, dx, \\
		\mathsf{I}_3^r &\coloneqq n \left(1-p\right) \int_{B_{2r}} v^{1-n}  \left\langle \nabla R, \stressv \right\rangle P^t \phi^2 \, dx.
	\end{align*}
	We proceed with the estimate each of these integrals. In the following calculations~$C$ and~$C'$ will be positive constants, possibly differing from line to line, but independent of~$r$. Additionally, the pointwise estimates will be performed on the set~$\R^n\setminus \mathcal{Z}_v$, which we recall that it is a set of full measure.
    
	From~\eqref{eq:gradP-anisot}, we have
	\begin{equation*}
	\label{eq:piec1}
		\abs*{\nabla P} \leq C \left(v^{-1} \,\abs{\mathring{W}} \abs*{\nabla v} + v^{-1} \,\abs*{\nabla v} \abs{R} \right)\!.
	\end{equation*}
	This inequality coupled with \eqref{eq:A-boundls} and Young’s inequality, yields
	\begin{align}
	\notag
		\abs*{\mathsf{I}_1^r} &\leq C \int_{A_{r}} v^{2-n} \,\abs*{\nabla v}^{p-2} \,\abs*{\nabla P} \abs*{\nabla \phi} P^t \phi \, dx \\
	\notag
		&\leq C \int_{A_r} v^{1-n} \,\abs*{\nabla v}^{p-1} \,\abs{\mathring{W}} \,\abs*{\nabla\phi} P^t  \phi \, dx +  C' \int_{A_r} v^{1-n} \,\abs*{\nabla v}^{p-1} \,\abs*{\nabla\phi} \abs{R} P^t \phi \, dx \\
	\notag
		&\leq \frac{n}{2} \left(p-1\right) \left(1-c_{p,H}\right) \int_{B_{2r}} v^{1-n} \,\abs{\mathring{W}}^2 P^t  \phi^2 \, dx +  C \int_{A_r} v^{1-n} \,\abs*{\nabla v}^{2(p-1)} \,\abs*{\nabla\phi}^2 P^t \, dx \\
	\label{eq:est1-anisot}
		&\quad+ C' \int_{A_r} v^{1-n} \,\abs*{\nabla v}^{p-1} \,\abs*{\nabla\phi} \abs{R} P^t \phi \, dx.
	\end{align}
	Finally, computing as~\cite{cg-plap} p.~27, from~\eqref{eq:defPfunct-anisot},~\eqref{eq:defRem-anisot},~\eqref{eq:D1p-anisot},~\eqref{eq:P-Linf-anisot}--\eqref{eq:R-Linf-anisot}, and the fact that~$t \geq 1$, we infer that
	\begin{equation}
	\label{eq:est11-anisot}
		\lim_{r \to +\infty} \left\{ \int_{A_r} v^{1-n} \,\abs*{\nabla v}^{2(p-1)} \,\abs*{\nabla\phi}^2 P^t  \, dx + \int_{A_r} v^{1-n} \,\abs*{\nabla v}^{p-1} \,\abs*{\nabla\phi} \abs{R} P^t \phi \, dx \right\} = 0.
	\end{equation}
	
	For the second integral we argue once more as on p.~27 of~\cite{cg-plap} to deduce its quantitative smallness, i.e.,
	\begin{equation}
	\label{eq:est2-anisot}
		\abs*{\mathsf{I}_2^r} \leq C \defi(u,\kappa).		
	\end{equation}
	
	For~$\mathsf{I}_3^r$, in light of~\eqref{eq:defRem-anisot}--\eqref{eq:regP-anisot}, and the regularity of~$\kappa$, we perform an integration by parts to obtain
	\begin{equation}
	\label{eq:est3-anisot}
		\mathsf{I}_3^r = n \left(1-p\right) \left(\mathsf{J}_1^r+\mathsf{J}_2^r+\mathsf{J}_3^r+\mathsf{J}_4^r\right) \!,
	\end{equation}
	where
	\begin{align*}
		\mathsf{J}_1^r &\coloneqq \left(1-n\right) \int_{B_{2r}} v^{-n}  \left\langle \nabla v,  \stressv \right\rangle  RP^t \phi^2 \, dx, \\
		\mathsf{J}_2^r &\coloneqq t \int_{B_{2r}} v^{1-n}  \left\langle \nabla P, \stressv \right\rangle R P^{t-1} \phi^2 \, dx, \\
		\mathsf{J}_3^r &\coloneqq  2 \int_{A_{r}} v^{1-n}  \left\langle \nabla \phi, \stressv \right\rangle  RP^t \phi \, dx, \\
		\mathsf{J}_4^r &\coloneqq \int_{B_{2r}} v^{1-n} \left[\tr W \right] RP^t \phi^2 \, dx = \int_{B_{2r}} v^{1-n} \left(P+R\right) RP^t \phi^2 \, dx.
	\end{align*}
	
	Using also~\eqref{eq:def-a-2} and~\eqref{eq:equiv-norma}, as in~\cite{cg-plap} p.~28, we deduce that
	\begin{equation}
	\label{eq:est4-anisot}
		\abs*{\mathsf{J}_1^r} + \abs*{\mathsf{J}_4^r} \leq C \defi(u,\kappa) \quad\text{and}\quad \lim_{r \to +\infty} \mathsf{J}_3^r = 0.
	\end{equation}
	
	Before estimating~$\mathsf{J}_2^r$, we go back to the left-hand side of~\eqref{eq:I-toest} and observe that by \eqref{eq:A-boundls}, we have
	\begin{equation*}
		c\, H^{p-2}(\nabla v) \,\abs*{\nabla P}^2 \leq \left\langle A\nabla P, \nabla P \right\rangle\!.
	\end{equation*}
	On top of the last estimate and reasoning as in~\cite{cg-plap} p.~28, we get
	\begin{equation}
	\label{eq:est5-anisot}
		\abs{\mathsf{J}_2^r} \leq C_H\,\frac{t}{2}\,\int_{B_{2r}} v^{2-n} H^{p-2}(\nabla v) P^{t-1} \,\abs*{\nabla P}^2 \, dx + C \defi(u,\kappa).
	\end{equation}
	
	Finally,~\eqref{eq:fund-est-anisot-def} follows by piecing~\eqref{eq:I-toest} together with~\eqref{eq:est1-anisot}--\eqref{eq:est5-anisot}, using~\eqref{eq:equiv-norma} and letting~$r \to +\infty$.
	
	
	\subsection{Integral estimate for $W$.}
	\label{step:est-W-anisot}
	
	In this step, we will derive an estimate for~$W$ in an appropriate weighted Lebesgue space.
	
	Let~$r>1$ and~$\mathsf{t} \in (0,1)$ two fixed quantities  to be determined by the end of the proof in dependence of~$\defi(u,\kappa)$.
	
	We start by recalling that in~\ref{step:decay-anisot} we have identified a point~$x_0 \in \R^n$ where~$u$ attains its maximum, so~$v$ attains its minimum at~$x_0$. By taking advantage of~\eqref{eq:bounds-v-anisot}, we can quantitatively locate this point; indeed we have
	\begin{equation}
	\label{eq:bound-vx0-anisot}
		v(x_0) \leq v(0) \leq \widehat{C}_0.
	\end{equation}
	Hence, by setting
	\begin{equation}
	\label{eq:defR-rad-anisot}
		\mathscr{R} \coloneqq \left(\frac{\widehat{C}_0-\hat{c}_0}{\hat{c}_0}\right)^{\!\!\frac{p-1}{p}}\!,
	\end{equation}
	which depends only on~$n$,~$p$,~$H$, and~$\norma*{\kappa}_{L^\infty(\R^n)}$, we necessarily have that~$x_0 \in B_{\mathscr{R}}$ due to~\eqref{eq:bounds-v-anisot}. From this, we have~$B_\mathsf{t}(x_0) \subseteq B_r$ provided that
	\begin{equation}
		\label{eq:cond-r-R}
		r \geq \mathscr{R}+2.
	\end{equation}
	
	We now claim that
	\begin{equation}
	\label{eq:intW-to-prove}
		\int_{B_{r}} v^{-n} \,\abs*{\nabla \stressv - \frac{\overline{P}}{n} \Id}^q \, dx \leq C_\flat \,\mathscr{F}_{\! q} \quad \mbox{for every } q \in [1,2],
	\end{equation}
	for some~$C_\flat>0$ depending only on~$n$,~$p$,~$H$, and~$\norma*{\kappa}_{L^\infty(\R^n)}$, some function of the deficit ~$\mathscr{F}_{\! q}$ to be determined later, where we set
	\begin{equation*}
		\overline{P} \coloneqq \dashint_{B_\mathsf{t}(x_0)} P \, dx.
	\end{equation*}
	To prove~\eqref{eq:intW-to-prove}, we begin by noticing that, thanks to the definition of the~$P$-function in~\eqref{eq:defPfunct-anisot} and \eqref{eq:bound-vx0-anisot}, estimate~\eqref{eq:fund-est-anisot-def} with~$t=1$ immediately yields
	\begin{equation*}
		\label{eq:stima-def}
		\int_{\R^n} v^{-n} \,\abs{\mathring{W}}^2 \, dx \leq C_2 \defi(u,\kappa),
	\end{equation*}
	for some~$C_2>0$ depending only on~$n$,~$p$,~$H$, and~$\norma*{\kappa}_{L^\infty(\R^n)}$. Therefore, using H\"older inequality, together with the anisotropic Sobolev inequality and the energy bound~\eqref{eq:ipotesi-energ-anisot}, for~$1\leq q\leq2$ we get
	\begin{equation}
	\label{eq:est-W-dot-anisot}
		\int_{\R^n} v^{-n}\,\abs{\mathring{W}}^q \, dx \leq \left(\int_{\R^n} v^{-n}\,\abs{\mathring{W}}^2 \, dx \right)^{\!\!\frac{q}{2}} \left(\int_{\R^n} v^{-n} \, dx \right)^{\!\!\frac{2-q}{2}} \leq C_3 \defi(u,\kappa)^{\frac{q}{2}},
	\end{equation}
	for some~$C_3>0$ depending only  on~$n$,~$p$,~$H$,~$\norma*{\kappa}_{L^\infty(\R^n)}$, and~$q$.
	
	Now we note that by exploiting~\eqref{eq:trW-anisot}, it follows
	\begin{multline}
		\label{eq:norma-L1-anisot}
		\int_{B_{r}} v^{-n}\,\abs*{W - \frac{\overline{P}}{n} \Id}^q \, dx \\
		\leq 4^{q-1} \int_{B_{r}} v^{-n}\,\abs{\mathring{W}}^q \, dx + 4^{q-1} \int_{B_{r}} v^{-n}\,\abs{P-\overline{P}}^q \, dx 
		+ 4^{q-1} \int_{B_{r}} v^{-n}\,\abs*{R}^q \, dx \\ 
		\eqqcolon 4^{q-1} \left\{\int_{B_{r}} v^{-n}\,\abs{\mathring{W}}^q \, dx + \mathcal{I}_1 + \mathcal{I}_2 \right\}\!.
	\end{multline}
	We shall proceed with the estimate of both~$\mathcal{I}_1$ and~$\mathcal{I}_2$. In the subsequent calculations~$C$ will denote a positive constant, possibly differing from line to line, which depends only on~$n$,~$p$,~$H$,~$\norma*{\kappa}_{L^\infty(\R^n)}$, and~$q$.
	
	By~\eqref{eq:regP-anisot}, the Poincar\'e inequality of Equation~(7.45) in~\cite{gt}, and~\eqref{eq:bounds-v-anisot} -- which also implies that~$v$ is bounded from below --, we get 
	\begin{equation}
	\label{eq:est-mI1}
		\mathcal{I}_1 \leq C \, r^{\frac{np}{p-1}} \, \mathcal{C}_P \!\left(r,\mathsf{t}\right)^q \int_{B_{r}} v^{-n}\,\abs{\nabla P}^q \, dx,
	\end{equation}
	where
	\begin{equation}
	\label{eq:def-CP-anisot}
		\mathcal{C}_P \!\left(r,\mathsf{t}\right) \coloneqq 2^n r^n \, \mathsf{t}^{1-n}.
	\end{equation}
	From~\eqref{eq:gradP-anisot}, and arguing as in~\cite{cg-plap} p.~30, we deduce that
	\begin{align}
		\notag
		\int_{B_{r}} v^{-n}\,\abs{\nabla P}^q \, dx &\leq C \int_{B_{r}} v^{-n-q} \,\abs{\mathring{W}}^q\abs{\nabla v}^q \, dx + C \int_{B_{r}} v^{-n-q} \,\abs{\nabla v}^q \abs{R}^q \, dx \\
		\label{eq:est-mI11}
		&\leq C \left(\defi(u,\kappa) + \defi(u,\kappa)^{\frac{q}{2}}\right)\!.
	\end{align}
	For the second integral, by~\eqref{eq:defRem-anisot} and since~$v$ is bounded below, we have
	\begin{equation}
	\label{eq:est-mI2}
		\mathcal{I}_2 \leq C \int_{B_r} v^{-n-q} \,\abs*{\kappa-1}^q \, dx \leq C \int_{B_r} v^{-n} \,\abs*{\kappa-1} \, dx \leq C \defi(u,\kappa).
	\end{equation}
	
	Since~\eqref{eq:def-small-anisot} is in force and~$r>1$, combining~\eqref{eq:norma-L1-anisot} with~\eqref{eq:est-W-dot-anisot},~\eqref{eq:est-mI1}, and~\eqref{eq:est-mI11}--\eqref{eq:est-mI2}, we infer the validity of~\eqref{eq:intW-to-prove} with
	\begin{equation*}
		\mathscr{F}_{\! q} \coloneqq r^{\frac{np}{p-1}} \left[1+ \mathcal{C}_P \!\left(r,\mathsf{t}\right)^q\right] \defi(u,\kappa)^{\frac{q}{2}},
	\end{equation*}
	where~$\mathcal{C}_P \!\left(r,\mathsf{t}\right)$ is given by~\eqref{eq:def-CP-anisot}.
	
	
	\subsection{Construction of the approximating functions.}
	\label{step:approx-func-anisot}
	
	We proceed now via a two-step approximation. First, we show that, as a consequence of~\eqref{eq:intW-to-prove}, it is possible to construct a function~$\mathsf{Q}$ that approximates~$v$ in~$L^p_{\omega}(B_{r})$, with~$\omega \coloneqq v^{-n-p+1}$, and whose gradient approximates~$\nabla v$ in~$L^p_{\omega}(B_{r})$, where~$w \coloneqq v^{-n}$. Secondly, we define a function~$\mathcal{Q}$ which, when inverted through the transformation~\eqref{eq:defv-anisot}, yields a~$(p,H)$-bubble. Both approximations are obtained using the technique developed in~\cite{cg-plap}, which also applies to the anisotropic case.
	
	We begin by defining the first approximating function by
	\begin{equation*}
		\mathsf{Q}(x) \coloneqq v(x_0) + \frac{p-1}{p} \left(\frac{\overline{P}}{n}\right)^{\!\frac{1}{p-1}} H_0^{\frac{p}{p-1}}(x-x_0),
	\end{equation*}
	which is of class~$C^\infty(\R^n \setminus \left\{x_0\right\})$ with~$\mathcal{Z}_{\mathsf{Q}} =\{x\in \R^n \mid \nabla \mathsf{Q}(x)=0\}= \left\{x_0\right\}$. Moreover, up to uniquely extending the stress field~$a(\xi)$ on~$\mathcal{Z}_{\mathsf{Q}}$, we notice that
	\begin{gather}
		\label{eq:stressQ-anisot}
		a \!\left(\nabla \mathsf{Q}\right)\!(x) = \frac{\overline{P}}{n} \left(x-x_0\right) \quad\mbox{for all } x \in \R^n, \\
		\notag
		\nabla a \!\left(\nabla \mathsf{Q}\right)\!(x) = \frac{\overline{P}}{n} \Id \quad\mbox{for all } x \in \R^n.
	\end{gather}
	Observe, in particular, that~\eqref{eq:stressQ-anisot} can be verified by using the homogeneity of~$H$ and Lemma~3.1 in~\cite{cianchi-sal}. Therefore, we have~$a \!\left(\nabla \mathsf{Q}\right) \in C^\infty(\R^n)$ and, from~\eqref{eq:intW-to-prove} and \eqref{eq:stressQ-anisot}, we deduce that
	\begin{equation}
	\label{eq:to-rewrite-anisot}
		\int_{B_{r}} v^{-n} \,\abs*{\nabla \stressv - \nabla a \!\left(\nabla \mathsf{Q}\right)}^q \, dx \leq C_\flat \,\mathscr{F}_{\! q} \quad \mbox{for every } q \in [1,2].
	\end{equation}
	For notational simplicity, we set
	\begin{equation*}
		\zeta \coloneqq \stressv - a \!\left(\nabla \mathsf{Q}\right)\!,
	\end{equation*}
	which is well defined in~$\R^n$, since we extended~$\stressv$ to zero on~$\mathcal{Z}_v$, and note that we can rewrite~\eqref{eq:to-rewrite-anisot} in the form
	\begin{equation}
	\label{eq:stimagrad-anisot}
		\int_{B_{r}} v^{-n}\,\abs*{\nabla \zeta}^q \, dx \leq C_\flat \,\mathscr{F}_{\! q} \quad \mbox{for every } q \in [1,2],
	\end{equation}
	with~$\zeta \in W^{1,2}_{\loc}(\R^n)$ by~\eqref{eq:regv-anisot}.
	
	We now introduce the small parameter~$\tau \in (0,1)$, to be chosen later, and define, for each component of~$\zeta$, the average
	\begin{equation*}
		\overline{\zeta_{i}} \coloneqq \dashint_{B_\tau(x_0)} \zeta_{i} \, dx = \dashint_{B_\tau(x_0)} a_i \!\left(\nabla v\right) dx,
	\end{equation*}
	where we used that, by~\eqref{eq:stressQ-anisot}, each component of~$a \!\left(\nabla \mathsf{Q}\right)$ is harmonic in~$\R^n$. Therefore, we get
	\begin{equation}
		\label{eq:est1-media-anisot}
		\abs{\overline{\zeta_{i}}} \leq \dashint_{B_\tau(x_0)} \,\abs*{a_i \!\left(\nabla v\right)} \, dx .
	\end{equation}
	Furthermore, using also~$\abs*{a(\xi)}\leq C\,\abs*{\xi}^{p-1}$ and~\eqref{eq:regv-anisot}, we have
	\begin{equation}
	\label{eq:est2-media-anisot}
		\begin{split}
			\norma*{\stressv}_{L^1(B_{\tau}(x_0))} &\leq C \norma*{\nabla v}_{L^{p-1}(B_{\tau}(x_0) )}^{p-1} = \norma*{\nabla v - \nabla v(x_0)}_{L^{p-1}(B_{\tau}(x_0))}^{p-1} \\
			&\leq \|\nabla v\|_{C^{0,\alpha}(B_{1+\mathscr{R}})} \tau^{n+\alpha\left(p-1\right)} \leq C\,\tau^{n+\alpha\left(p-1\right)},
		\end{split}
	\end{equation}
	for some~$C>0$ and~$\alpha\in(0,1)$ depending only on~$n$,~$p$,~$H$, and~$\norma*{\kappa}_{L^\infty(\R^n)}$. Here we used that~$B_\tau(x_0)\subseteq B_1(x_0)\subseteq B_{1+\mathscr{R}}$, with~$\mathscr{R}$ given by~\eqref{eq:defR-rad-anisot}. Thus,~\eqref{eq:est1-media-anisot} simplifies to
	\begin{equation}
		\label{eq:L1-toest-1-anisot}
		\abs{\overline{\zeta_{i}}} \leq C \tau^{\alpha\left(p-1\right)},
	\end{equation}
	for some~$C>0$ depending only on~$n$,~$p$,~$H$, and~$\norma*{\kappa}_{L^\infty(\R^n)}$.
	
	Observe that~\eqref{eq:cond-r-R} yields that~$B_\tau(x_0) \subseteq B_r$, hence, by applying once more the Poincar\'e inequality of Equation~(7.45) in~\cite{gt} and using also~\eqref{eq:bounds-v-anisot}, we get
	\begin{equation}
	\label{eq:L1-toest-2-anisot}
		\norma{\zeta_{i} - \overline{\zeta_{i}}}_{L^q_{w}(B_{r})}^q \leq C \, r^{\frac{np}{p-1}} \, \mathcal{C}_P \!\left(r,\tau\right)^q \int_{B_{r}} v^{-n}\,\abs*{\nabla \zeta}^q \, dx \leq C \, r^{\frac{np}{p-1}} \, \mathcal{C}_P \!\left(r,\tau\right)^q \mathscr{F}_{\! q},
	\end{equation}
	where~$\mathcal{C}_P \!\left(r,\tau\right)$ is given by~\eqref{eq:def-CP-anisot} and we exploited~\eqref{eq:stimagrad-anisot}. Finally, since for each component of~$\zeta$ we have
	\begin{equation*}
		\norma{\zeta_{i}}_{L^q_{w}(B_{r})}^q \leq 2^{q-1}\norma{\zeta_{i} - \overline{\zeta_{i}}}_{L^q_{w}(B_{r})}^q + 2^{q-1} \abs{\overline{\zeta_{i}}}^q \,\norma{v^{-n}}_{L^1(\R^n)},
	\end{equation*}
	from~\eqref{eq:L1-toest-1-anisot} and~\eqref{eq:L1-toest-2-anisot}, we conclude that
	\begin{equation}
	\label{eq:L1-stress-anisot}
		\norma{\zeta}_{L^q_{w}(B_{r})}^q \leq C_4 \left[r^{\frac{np}{p-1}} \, \mathcal{C}_P \!\left(r,\tau\right)^q \mathscr{F}_{\! q}+ \tau^{\alpha q\left(p-1\right)} \right] \eqqcolon C_4 \,\mathscr{G}_{q},
	\end{equation}
	for some~$C_4>0$ depending only on~$n$,~$p$,~$H$,~$\norma*{\kappa}_{L^\infty(\R^n)}$, and~$q$.
	
	Once~\eqref{eq:L1-stress-anisot} has been established, arguing as in~\cite{cg-plap} p.~32, we deduce that
	\begin{equation}
	\label{eq:norma-p-grad-anisot}
		\int_{B_{r}} v^{-n}\,\abs*{\nabla \!\left(v-\mathsf{Q}\right)}^{p} \, dx \leq C_5 \, r^{\frac{p\left(2-p\right)_+}{p-1}} \, \mathscr{G}_{\min\left\{p,\frac{p}{p-1}\right\}},
	\end{equation}
	for some~$C_5>0$ depending only on~$n$,~$p$,~$H$, and~$\norma*{\kappa}_{L^\infty(\R^n)}$. From this, arguing once more as in~\cite{cg-plap} pp.~32-33, we also have
	\begin{equation}
		\label{eq:norma-p-func-anisot}
		\int_{B_{r}} v^{-n-p+1}\,\abs*{\left(v-\mathsf{Q}\right)}^{p} \, dx \leq C_6 \, r^{\frac{p\left(2-p\right)_+}{p-1}} \, \mathscr{G}_{\min\left\{p,\frac{p}{p-1}\right\}},
	\end{equation}
	for some~$C_6>0$ depending only on~$n$,~$p$,~$H$, and~$\norma*{\kappa}_{L^\infty(\R^n)}$.
	
	As anticipated earlier, we now define the second approximating function given by
	\begin{equation}\label{second:approx}
		\mathcal{Q}(x) \coloneqq \frac{\lambda^\frac{p}{p-1}+H_0^\frac{p}{p-1}(x-x_0)}{\lambda^\frac{1}{p-1} \, n^\frac{1}{p} \left(\frac{n-p}{p-1}\right)^{\!\!\frac{p-1}{p}}} \quad\mbox{with}\quad \lambda = \frac{1}{\overline{P}} \left(\frac{p}{p-1}\right)^{\! p-1} n^\frac{1}{p} \left(\frac{n-p}{p-1}\right)^{\!\!- \frac{(p-1)^2}{p}}\!.
	\end{equation}
	Note, in particular, that when inverted through the transformation~\eqref{eq:defv-anisot},~$\mathcal{Q}$ yields a~$(p,H)$-bubble of the form~\eqref{eq:pH-bubb}. A direct computation reveals that
	\begin{equation*}
		\label{eq:diffQ-anisot}
		\nabla \mathcal{Q} = \nabla \mathsf{Q} \quad\mbox{in } \R^n,
	\end{equation*}
	therefore~\eqref{eq:norma-p-grad-anisot} reads as
	\begin{equation}
	\label{eq:norma-p-grad-2-anisot}
		\int_{B_{r}} v^{-n}\,\abs*{\nabla \!\left(v-\mathcal{Q}\right) }^{p} \, dx \leq C_5 \, r^{\frac{p\left(2-p\right)_+}{p-1}} \, \mathscr{G}_{\min\left\{p,\frac{p}{p-1}\right\}}.
	\end{equation}
	Moreover, we have
	\begin{equation}
		\label{eq:diff-per-Linf-anisot}
		\mathsf{Q}(x) - \mathcal{Q}(x) = v(x_0) - \frac{1}{\overline{P} }\left(\frac{p}{n-p}\right)^{\! p-1} \quad\mbox{for every } x \in \R^n.
	\end{equation}
	We now aim to prove that the right-hand side of~\eqref{eq:diff-per-Linf-anisot} is~$L^\infty$-small and then upgrade this estimate to a suitable Lebesgue norm. To this end, we observe that
	\begin{align*}
		v(x_0) \overline{P} &- \left(\frac{p}{n-p}\right)^{\! p-1} \\
		&= \dashint_{B_\mathsf{t}(x_0)} \frac{n(p-1)}{p} \frac{v(x_0)}{v} \,
		H^p \!\left(\nabla v-\nabla v(x_0)\right) + \left(\frac{p}{n-p}\right)^{\! p-1} \frac{v(x_0)-v}{v} \, dx.
	\end{align*}
	where we used that~$\nabla v(x_0)=0$ by minimality. Thus, by exploiting~\eqref{eq:equiv-norma},~\eqref{eq:regv-anisot}, as well as the fact that~$v$ is bounded below, and arguing as in the deduction of~\eqref{eq:est2-media-anisot}, we infer that
	\begin{equation}
		\label{eq:to-mult}
		\abs*{v(x_0) \overline{P} - \left(\frac{p}{n-p}\right)^{\! p-1}} \leq C \,\mathsf{t}^{\alpha p},
	\end{equation}
	for some~$C>0$ depending only on~$n$,~$p$,~$H$, and~$\norma*{\kappa}_{L^\infty(\R^n)}$. Moreover, since we have already proven that~$x_0 \in B_{\mathscr{R}}$, with~$\mathscr{R}$ defined in~\eqref{eq:defR-rad-anisot}, the bounds~\eqref{eq:bound-gradv-anisot} and~\eqref{eq:bound-vx0-anisot} give that~$v \leq C$ in~$B_\mathsf{t}(x_0)$, for some~$C>0$ depending only on~$n$,~$p$,~$H$, and~$\norma*{\kappa}_{L^\infty(\R^n)}$. As a consequence,
	\begin{equation}
		\label{eq:bound-mean-below}
		\overline{P} \geq \left(\frac{p}{n-p}\right)^{\! p-1} \dashint_{B_\mathsf{t}(x_0)}  v^{-1} \, dx \geq C,
	\end{equation}
	for some~$C>0$ depending only on~$n$,~$p$,~$H$, and~$\norma*{\kappa}_{L^\infty(\R^n)}$. This, together with~\eqref{eq:P-Linf-anisot}, also implies that $\lambda$ in \eqref{second:approx} satisfies
	\begin{equation}
	\label{eq:bounds-lambda-anisot}
		\Lambda_1 \leq \lambda \leq \Lambda_2,
	\end{equation}
	for some~$\Lambda_1,\Lambda_2>0$ depending only on~$n$,~$p$, and~$\norma*{\kappa}_{L^\infty(\R^n)}$. In addition, dividing both sides of~\eqref{eq:to-mult} by~$\overline{P}$, and exploiting~\eqref{eq:bound-mean-below}, we conclude that
	\begin{equation*}
		\abs*{v(x_0) - \frac{1}{\overline{P}} \left(\frac{p}{n-p}\right)^{\! p-1}} \leq C \,\mathsf{t}^{\alpha p},
	\end{equation*}
	for some~$C>0$ depending only on~$n$,~$p$,~$H$, and~$\norma*{\kappa}_{L^\infty(\R^n)}$. From the latter and~\eqref{eq:diff-per-Linf-anisot}, we read off
	\begin{equation*}
		\norma*{\mathsf{Q} - \mathcal{Q}}_{L^\infty(\R^n)} \leq C \,\mathsf{t}^{\alpha p},
	\end{equation*}
	which, using also the boundedness~$v$ from below,~\eqref{eq:ipotesi-energ-anisot}, and the anisotropic Sobolev inequality, implies
	\begin{equation}
		\label{eq:closQ-anisot}
		\norma*{\mathsf{Q} - \mathcal{Q}}_{L^p_\omega(\R^n)}^p \leq \norma*{\mathsf{Q} - \mathcal{Q}}_{L^\infty(\R^n)}^p \int_{\R^n} v^{-n-p+1} \, dx \leq C_8 \,\mathsf{t}^{\alpha p^2},
	\end{equation}
	for some~$C_8>0$ depending only on~$n$,~$p$,~$H$, and~$\norma*{\kappa}_{L^\infty(\R^n)}$.
	
	Taking advantage of~\eqref{eq:norma-p-func-anisot} and~\eqref{eq:closQ-anisot}, using the triangle inequality, we infer
	\begin{align}
		\notag
		\int_{B_{r}} v^{-n-p+1} \,\abs*{v-\mathcal{Q}}^{p} \, dx
		&\leq 2^{p-1}  C_6 \, r^{\frac{p\left(2-p\right)_+}{2\left(p-1\right)}} \, \mathscr{G}_{\min\left\{p,\frac{p}{p-1}\right\}} + 2^{p-1} C_8 \,\mathsf{t}^{\alpha p^2} \\
		\label{eq:diff-vQ-anisot}
		&\leq C_{9} \left[ r^{\frac{p\left(2-p\right)_+}{p-1}} \, \mathscr{G}_{\min\left\{p,\frac{p}{p-1}\right\}} + \mathsf{t}^{\alpha p^2} \right] \!,
	\end{align}
	for some~$C_{9}>0$ depending only on~$n$,~$p$,~$H$, and~$\norma*{\kappa}_{L^\infty(\R^n)}$.
	
	
	\subsection{Going back to~$u$.}
	\label{step:backtou-anisot}
	
	We are now ready to return to the level of~$u$ by defining the final approximation as
	\begin{equation*}
		\mathcal{U} \coloneqq \mathcal{Q}^{-\frac{n-p}{p}} = U_p[x_0,\lambda],
	\end{equation*}
	obtained by inverting~$\mathcal{Q}$ through~\eqref{eq:defv-anisot}. The goal is to establish the~$\mathcal{D}^{1,p}$-closeness of~$\mathcal{U}$ to~$u$, making use of~\eqref{eq:norma-p-grad-2-anisot} and~\eqref{eq:diff-vQ-anisot}.
	
	We begin by noting that
	\begin{equation*}
		\nabla u = \frac{p-n}{p} \, v^{-\frac{n}{p}} \,\nabla v \quad\mbox{and}\quad \nabla \mathcal{U} = \frac{p-n}{p} \, \mathcal{Q}^{-\frac{n}{p}} \,\nabla \mathcal{Q} \quad \mbox{in } \R^n,
	\end{equation*}
	thus we have
	\begin{multline}
		\label{eq:normp-toest-anisot-1}
		\norma*{\nabla \!\left(u-\mathcal{U}\right) }_{L^p(B_{r})}^p = \left(\frac{n-p}{p}\right)^{\! p} \int_{B_{r}} \,\abs*{v^{-\frac{n}{p}} \,\nabla v-\mathcal{Q}^{-\frac{n}{p}} \,\nabla \mathcal{Q}} \, dx \\
		\leq 2^{p-1} \left(\frac{n-p}{p}\right)^{\! p} \left\{\int_{B_{r}} v^{-n}  \,\abs*{\nabla \!\left(v-\mathcal{Q}\right)}^p \, dx + \int_{B_{r}} \,\abs*{v^{-\frac{n}{p}}-\mathcal{Q}^{-\frac{n}{p}}}^p \,\abs*{\nabla \mathcal{Q}}^p \, dx\right\} \!.
	\end{multline}
	Arguing once more as in~\cite{cg-plap} p.~35, we deduce that
	\begin{equation}
	\label{eq:normp-toest-anisot-2}
		\int_{B_{r}} \,\abs*{v^{-\frac{n}{p}}-\mathcal{Q}^{-\frac{n}{p}}}^p \,\abs*{\nabla \mathcal{Q}}^p \, dx \leq C \int_{B_{r}} v^{-n-p+1} \,\abs*{v-\mathcal{Q}}^{p} \, dx,
	\end{equation}
	for some constant~$C>0$ depending only on~$n$,~$p$,~$H$, and~$\norma*{\kappa}_{L^\infty(\R^n)}$.
	
	Combining~\eqref{eq:normp-toest-anisot-1} with~\eqref{eq:norma-p-grad-2-anisot},~\eqref{eq:diff-vQ-anisot}, and~\eqref{eq:normp-toest-anisot-2}, we conclude that
	\begin{equation}
		\label{eq:D1p-int-anisot}
		\norma*{\nabla \!\left(u-\mathcal{U}\right) }_{L^p(B_{r})}^p \leq C_{10} \left[ r^{\frac{p\left(2-p\right)_+}{p-1}} \, \mathscr{G}_{\min\left\{p,\frac{p}{p-1}\right\}} + \mathsf{t}^{\alpha p^2} \right]\!,
	\end{equation} 
	for some~$C_{10}>0$ depending only on~$n$,~$p$,~$H$, and~$\norma*{\kappa}_{L^\infty(\R^n)}$, with
	\begin{equation*}
		\mathscr{G}_{q} = r^{\frac{2np}{p-1}} \, \mathcal{C}_P \!\left(r,\tau\right)^q \left[1+ \mathcal{C}_P \!\left(r,\mathsf{t}\right)^q\right] \defi(u,\kappa)^{\frac{q}{2}}+ \tau^{\alpha q\left(p-1\right)}
	\end{equation*}
	and where~$\mathcal{C}_P \!\left(r,\cdot\right)$ is defined in~\eqref{eq:def-CP-anisot}.
	
	Moreover, by~\eqref{eq:cond-r-R}, it holds that~$B_1(x_0) \subseteq B_{r}$, therefore, exploiting also~\eqref{eq:equiv-H0} and~\eqref{eq:bounds-lambda-anisot}, we have
	\begin{equation*}
		\abs*{\nabla \mathcal{U}(x)} \leq C \,\abs*{x}^{\frac{1-n}{p-1}} \quad \mbox{for } x \in \R^n \setminus B_{r},
	\end{equation*}
	for some~$C>0$ depending only on~$n$,~$p$,~$H$, and~$\norma*{\kappa}_{L^\infty(\R^n)}$. The latter and~\eqref{eq:bound-gradu-anisot} imply
	\begin{equation}
		\label{eq:D1p-ext-anisot}
		\norma*{\nabla \!\left(u-\mathcal{U}\right) }_{L^p(\R^n \setminus B_{r})}^p \leq C \int_{\R^n \setminus B_{r}} \,\abs*{x}^{-\frac{p}{p-1} (n-1)} \, dx \leq C \, r^{-\frac{n-p}{p-1}},
	\end{equation}
	for some~$C>0$ depending only on~$n$,~$p$,~$H$, and~$\norma*{\kappa}_{L^\infty(\R^n)}$.
	
	Choosing the parameters
	\begin{align*}
		r&\coloneqq \defi(u,\kappa)^{-\min\left\{\frac{q}{4\mathfrak{p}},\frac{\alpha q\left(p-1\right)^2}{16\left(n-1\right)p\left(2-p\right)_+}\right\}} = \defi(u,\kappa)^{-\mathsf{m}}, \\
		\mathsf{t}=\tau&\coloneqq \defi(u,\kappa)^{\frac{1}{8(n-1)}},
	\end{align*}
	where
	\begin{equation*}
		q \coloneqq \min\left\{p,\frac{p}{p-1}\right\} \quad\mbox{and}\quad \mathfrak{p} \coloneqq 2nq+\frac{p}{p-1}\left[2n+\left(2-p\right)_+\right]\!,
	\end{equation*}
	and summing~\eqref{eq:D1p-int-anisot} with~\eqref{eq:D1p-ext-anisot}, we finally conclude that
	\begin{equation}
	\label{eq:D1p-final}
		\norma*{\nabla \!\left(u-\mathcal{U}\right) }_{L^p(\R^n)} \leq \mathscr{C} \defi(u,\kappa)^{\vartheta},
	\end{equation}
	for some~$\mathscr{C}>0$ and~$\vartheta \in (0,1)$, depending only on~$n$,~$p$,~$H$, and~$\norma*{\kappa}_{L^\infty(\R^n)}$. In addition, we observe that the above requirements on the parameters, in particular~\eqref{eq:cond-r-R}, are verified provided that
	\begin{equation*}
		\gamma \leq \min\left\{\frac{1}{2}, \delta, \left(\mathscr{R}+2\right)^{\!-\frac{1}{\mathsf{m}}} \right\},
	\end{equation*}
	where~$\gamma$ and~$\delta$ verifies~\eqref{eq:def-small-anisot} and~\eqref{eq:delta-to-fix-anisot}, respectively.
	
	
	\subsection{Conclusion.}
	\label{step:conclusion-anisot} 
	
	As indicated at the end of~\ref{step:struwe-reduction}, we resume using the notation~$u_\epsilon$ in place of~$u$. Hence, from~\eqref{eq:D1p-final}, it follows that
	\begin{equation}
	\label{eq:conclusion-anisot}
		\norma*{\nabla \!\left(u_\epsilon-\mathcal{U}\right) }_{L^p(\R^n)} \leq \mathscr{C} \defi(u,\kappa)^{\vartheta}.
	\end{equation}
	
	Recall that we set~$u_\epsilon = T_{-z_\epsilon/\lambda_\epsilon,\lambda_\epsilon} \!\left(u\right)$. Therefore, point~\ref{it:inver-anisot} of Lemma~\ref{lem:sym-anisot} implies that~$u = T_{z_\epsilon,\lambda_\epsilon} \!\left(u_\epsilon\right)$. Defining~$\mathcal{U}_0 \coloneqq T_{z_\epsilon,\lambda_\epsilon} \!\left(\mathcal{U}\right)$, which is a~$(p,H)$-bubble by point~\ref{it:stillbub-anisot} of Lemma~\ref{lem:sym-anisot},~\eqref{eq:conclusion-anisot} reads as
	\begin{equation*}
		\norma*{\nabla \!\left(u-\mathcal{U}_0\right) }_{L^p(\R^n)} \leq \mathscr{C} \defi(u,\kappa)^{\vartheta},
	\end{equation*}
	thereby proving the result.
	
	\appendix
	
	\section{Proof of Lemma~\ref{lem:xi-p}}
	\label{app:lem}
	
	\noindent
		Of course, for~$k=1$ there is nothing to prove so we may assume~$k\geq2$.
		
		To prove~\eqref{eq:xi-p-stat}, we first claim that there exists a constant~$\mathsf{C}_{p,k}>0$ such that
		\begin{equation}
			\label{eq:xi-p}
			\abs*{\sum_{i=1}^{k} x_i}^p \leq \sum_{i=1}^{k} \,\abs*{x_i}^p + \mathsf{C}_{p,k} \left\{\sum_{i=1}^{k} \sum_{\substack{j=1 \\ j \neq i}}^{k} \,\abs*{x_i}^{p-1} \abs*{x_j} \right\}\!.
		\end{equation}
		By finite induction, it suffices to prove~\eqref{eq:xi-p} for~$k=2$. Thus, we will show that for~$p>1$, there exists a constant~$\mathsf{C}_{p}>0$ such that
		\begin{equation}
			\label{eq:toprove-k=2}
			\abs*{x+y}^p \leq \abs*{x}^p + \abs*{y}^p + \mathsf{C}_{p} \left\{\abs*{x}^{p-1} \abs*{y} + \abs*{y}^{p-1} \abs*{x}\right\} \quad\text{for all } x,y \in \R^n.
		\end{equation}
		Indeed, fix a~$\mathsf{k} >2$ and suppose that~\eqref{eq:xi-p} holds for all~$2 \leq k \leq \mathsf{k}$. For~$x_1,\dots,x_{\mathsf{k}+1} \in \R^n$, set
		\begin{equation}
			\label{eq:def-X-red}
			X \coloneqq \sum_{i=1}^{\mathsf{k}} x_i.
		\end{equation}
		Applying~\eqref{eq:xi-p} to the couple~$X$ and~$x_{\mathsf{k}+1}$, we deduce that
		\begin{equation}
			\label{eq:est-xk+1}
			\begin{split}
				\abs*{\sum_{i=1}^{\mathsf{k}+1} x_i}^p &= \abs*{X+x_{\mathsf{k}+1}}^p \\
				&\leq \abs*{X}^p + \abs*{x_{\mathsf{k}+1}}^p + \mathsf{C}_{p,2} \left\{\abs*{X}^{p-1} \abs*{x_{\mathsf{k}+1}} + \abs*{x_{\mathsf{k}+1}}^{p-1} \abs*{X}\right\}\!.
			\end{split}
		\end{equation}
		Moreover, by discrete H\"older inequality, we have
		\begin{equation}
			\label{eq:chain-1}
			\abs*{X}^{p-1} \leq \left(\sum_{i=1}^{\mathsf{k}} \,\abs*{x_i}\right)^{\! p-1} \leq \mathsf{k}^{(p-2)_+} \sum_{i=1}^{\mathsf{k}} \,\abs*{x_i}^{p-1}
		\end{equation}
		where~$(p-2)_+=\max\{p-2,0\}$, and using~\eqref{eq:xi-p} with~$k=\mathsf{k}$ we also get 
		\begin{equation}
			\label{eq:chain-2}
			\abs*{X}^p \leq \sum_{i=1}^{\mathsf{k}} \,\abs*{x_i}^p + \mathsf{C}_{p,\mathsf{k}} \left\{\sum_{i=1}^{\mathsf{k}} \sum_{\substack{j=1 \\ j \neq i}}^{\mathsf{k}} \,\abs*{x_i}^{p-1} \abs*{x_j} \right\}\!.
		\end{equation}
		Chaining~\eqref{eq:chain-1}--\eqref{eq:chain-2} with~\eqref{eq:est-xk+1} and exploiting the triangle inequality, we infer
		\begin{align*}
			\abs*{\sum_{i=1}^{\mathsf{k}+1} x_i}^p &\leq \sum_{i=1}^{\mathsf{k}+1} \,\abs*{x_i}^p + \mathsf{C}_{p,\mathsf{k}} \left\{\sum_{i=1}^{\mathsf{k}} \sum_{\substack{j=1 \\ j \neq i}}^{\mathsf{k}} \,\abs*{x_i}^{p-1} \abs*{x_j} \right\} \\
			&\quad+ \mathsf{C}_{p,2} \left\{\mathsf{k}^{(p-2)_+} \sum_{i=1}^{\mathsf{k}} \,\abs*{x_i}^{p-1}\abs{x_{\mathsf{k}+1}} + \sum_{j=1}^{\mathsf{k}} \,\abs*{x_{\mathsf{k}+1}}^{p-1}\abs*{x_j} \right\} \\
			&= \sum_{i=1}^{\mathsf{k}+1} \,\abs*{x_i}^p + \left[\mathsf{C}_{p,\mathsf{k}} + \left(\mathsf{k}^{(p-2)_+}+1\right) \mathsf{C}_{p,2} \right] \left\{\sum_{i=1}^{\mathsf{k+1}} \sum_{\substack{j=1 \\ j \neq i}}^{\mathsf{k+1}} \,\abs*{x_i}^{p-1} \abs*{x_j} \right\}\!,
		\end{align*}
		which proves~\eqref{eq:xi-p} for~$k=\mathsf{k}+1$.
		
		We are left to verify~\eqref{eq:toprove-k=2}. By the triangle inequality, it suffices to prove that for~$p>1$, there exists a constant~$\mathsf{C}_{p}>0$ such that
		\begin{equation}
			\label{eq:est-ab-toprove}
			(a+b)^p \leq a^p + b^p + \mathsf{C}_{p} \left\{a^{p-1} b + b^{p-1} a\right\} \quad\text{for all } a,b \geq 0.
		\end{equation}
		
		We now observe that if either~$a=0$ or~$b=0$, then~\eqref{eq:est-ab-toprove} is trivial. Moreover since~\eqref{eq:est-ab-toprove} is symmetric in~$a$ and~$b$, we may assume without loss of generality that~$a \geq b>0$, and set~$t \coloneqq b/a \in (0,1]$. As a result,~\eqref{eq:est-ab-toprove} is equivalent to
		\begin{equation}
			\label{eq:est-t-toprove}
			(1+t)^p \leq 1 + t^p + \mathsf{C}_{p} \left(t^{p-1} + t\right) \quad\text{for all } t \in (0,1].
		\end{equation}
		We now define
		\begin{equation*}
			f_p(t) \coloneqq 1 + t^p - (1+t)^p + \mathsf{C}_{p} \left(t^{p-1} + t\right)\!,
		\end{equation*}
		so that~$f_p(0)=0$ and
		\begin{equation}
			\label{eq:fp-der}
			f_p'(t) = p \left[t^{p-1} - \left(1+t\right)^{p-1}\right] + \mathsf{C}_{p} \left[\left(p-1\right)t^{p-2} + 1\right]\!.
		\end{equation}
		Setting~$g_p(t) \coloneqq t^{p-1} - \left(1+t\right)^{p-1}$, it follows that
		\begin{equation*}
			g_p'(t) = (p-1) \left[t^{p-2} - \left(1+t\right)^{p-2}\right] \neq 0 \quad\text{for all } t \in (0,1).
		\end{equation*}
		Therefore, we have
		\begin{equation*}
			g_p(t) \geq \min\left\{g_p(0) , g_p(1) \right\} = 1-2^{\max\left\{1,p-1\right\}} \eqqcolon \mathsf{c}_p <0 \quad\text{for all } t \in [0,1].
		\end{equation*}
		Coupling with~\eqref{eq:fp-der} and taking~$\mathsf{C}_{p} = 1-p \,\mathsf{c}_p>0$ , we obtain
		\begin{equation*}
			f_p'(t) \geq 1+\left(p-1\right) \mathsf{C}_{p} t^{p-2} >0 \quad\text{for all } t \in (0,1] \quad\text{and}\quad \lim_{t\to0^+} f_p'(t) >0.
		\end{equation*}
		As a consequence, we conclude that
		\begin{equation*}
			f_p(t) \geq f_p(0) = 0 \quad\text{for all } t \in [0,1],
		\end{equation*}
		which, in turn, implies the validity of~\eqref{eq:est-t-toprove}.
		
		We now claim that, possibly for a larger~$\mathsf{C}_{p,k}>0$, we also have
		\begin{equation}
			\label{eq:xi-p-2}
			\sum_{i=1}^{k} \,\abs*{x_i}^p \leq \abs*{\sum_{i=1}^{k} x_i}^p + \mathsf{C}_{p,k} \left\{\sum_{i=1}^{k} \sum_{\substack{j=1 \\ j \neq i}}^{k} \,\abs*{x_i}^{p-1} \abs*{x_j} \right\}\!.
		\end{equation}
		Clearly, the latter, together with~\eqref{eq:xi-p}, implies the validity of~\eqref{eq:xi-p-stat}. By finite induction, it suffices to prove~\eqref{eq:xi-p-2} for~$k=2$. Thus, we will show that for~$p>1$, there exists a constant~$\mathsf{C}_{p}>0$ such that
		\begin{equation}
			\label{eq:toprove-k=2-2}
			\abs*{x}^p + \abs*{y}^p \leq \abs*{x+y}^p + \mathsf{C}_{p} \left\{\abs*{x}^{p-1} \abs*{y} + \abs*{y}^{p-1} \abs*{x}\right\} \quad\text{for all } x,y \in \R^n.
		\end{equation}
		Indeed, fix a~$\mathsf{k} >2$ and suppose that~\eqref{eq:xi-p-2} holds for all~$2 \leq k \leq \mathsf{k}$. Let~$X$ be defined by~\eqref{eq:def-X-red}, then~\eqref{eq:xi-p-2} with~$k=\mathsf{k}$ gives
		\begin{equation}
			\label{eq:chain-3}
			\sum_{i=1}^{\mathsf{k}+1} \,\abs*{x_i}^p = \sum_{i=1}^{\mathsf{k}} \,\abs*{x_i}^p + \abs*{x_{\mathsf{k}+1}}^p \leq \abs*{X}^p + \abs*{x_{\mathsf{k}+1}}^p + \mathsf{C}_{p,\mathsf{k}} \left\{\sum_{i=1}^{\mathsf{k}} \sum_{\substack{j=1 \\ j \neq i}}^{\mathsf{k}} \,\abs*{x_i}^{p-1} \abs*{x_j} \right\}\!.
		\end{equation}
		Applying~\eqref{eq:xi-p-2} to the couple~$X$ and~$x_{\mathsf{k}+1}$, we deduce that
		\begin{equation}
			\label{eq:chain-4}
			\abs*{X}^p + \abs*{x_{\mathsf{k}+1}}^p \leq \abs*{\sum_{i=1}^{\mathsf{k}+1} x_i}^p + \mathsf{C}_{p,2} \left\{\abs*{X}^{p-1} \abs*{x_{\mathsf{k}+1}} + \abs*{x_{\mathsf{k}+1}}^{p-1} \abs*{X}\right\}\!.
		\end{equation}
		Chaining~\eqref{eq:chain-3}--\eqref{eq:chain-4} with \eqref{eq:chain-1} and the triangle inequality, we get
		\begin{align*}
			\sum_{i=1}^{\mathsf{k}+1} \,\abs*{x_i}^p &\leq \abs*{\sum_{i=1}^{\mathsf{k}+1} x_i}^p + \mathsf{C}_{p,\mathsf{k}} \left\{\sum_{i=1}^{\mathsf{k}} \sum_{\substack{j=1 \\ j \neq i}}^{\mathsf{k}} \,\abs*{x_i}^{p-1} \abs*{x_j} \right\} \\
			&\quad+ \mathsf{C}_{p,2} \left\{\mathsf{k}^{(p-2)_+} \sum_{i=1}^{\mathsf{k}} \,\abs*{x_i}^{p-1}\abs{x_{\mathsf{k}+1}} + \sum_{j=1}^{\mathsf{k}} \,\abs*{x_{\mathsf{k}+1}}^{p-1}\abs*{x_j} \right\} \\
			&= \abs*{\sum_{i=1}^{\mathsf{k}+1} x_i}^p + \left[\mathsf{C}_{p,\mathsf{k}} + \left(\mathsf{k}^{(p-2)_+}+1\right) \mathsf{C}_{p,2} \right] \left\{\sum_{i=1}^{\mathsf{k+1}} \sum_{\substack{j=1 \\ j \neq i}}^{\mathsf{k+1}} \,\abs*{x_i}^{p-1} \abs*{x_j} \right\}\!,
		\end{align*}
		which proves~\eqref{eq:xi-p-2} for~$k=\mathsf{k}+1$.
		
		We are left to verify~\eqref{eq:toprove-k=2-2}. We start by observing that if either~$x=0$ or~$y=0$, we have nothing to prove. Moreover,~\eqref{eq:toprove-k=2-2} is invariant under the change of~$x$ with~$y$, therefore we may assume that~$0<\abs*{x} \leq \abs*{y}$. We now define the vectors
		\begin{equation*}
			e \coloneqq \frac{y}{\abs*{y}} \in \sfera^n \quad\text{and}\quad \omega \coloneqq \frac{x}{\abs*{y}} \in \overline{B_1} \setminus \{0\}.
		\end{equation*}
		Dividing both sides by~$\abs*{y}^p$,~\eqref{eq:toprove-k=2-2} is equivalent to
		\begin{equation}
			\label{eq:red-1}
			1 + \abs*{\omega}^p \leq \abs*{\omega+e}^p + \mathsf{C}_{p} \left\{\abs*{\omega}^{p-1} + \abs*{\omega}\right\} \!.
		\end{equation}
		Since~$\abs*{e}=1$, we have~$\abs*{\omega+e}^2 = 1+\abs*{\omega}^2+2\left\langle \omega,e\right\rangle \geq 1+\abs*{\omega}^2-2\abs*{\omega} = \left(1-\abs*{\omega}\right)^2$. As a consequence, inequality~\eqref{eq:red-1} follows if we can prove that
		\begin{equation*}
			1 + \abs*{\omega}^p \leq \left(1-\abs*{\omega}\right)^p + \mathsf{C}_{p} \left\{\abs*{\omega}^{p-1} + \abs*{\omega}\right\} \!.
		\end{equation*}
		Setting~$t \coloneqq \abs*{\omega} \in (0,1]$, the latter amounts to
		\begin{equation}
			\label{eq:red-2}
			1 + t^p \leq \left(1-t\right)^p + \mathsf{C}_{p} \left(t^{p-1} + t \right) \quad\text{for all } t \in (0,1].
		\end{equation}
		We now define
		\begin{equation*}
			h_p(t) \coloneqq 1 + t^p - \left(1-t\right)^p - \mathsf{C}_{p} \left(t^{p-1} + t \right)\!,
		\end{equation*}
		so that~$h_p(0)=0$ and
		\begin{equation*}
			h_p'(t) = p \left[t^{p-1} + \left(1-t\right)^{p-1}\right] - \mathsf{C}_{p} \left[\left(p-1\right)t^{p-2} + 1\right]\!.
		\end{equation*}
		Taking~$\mathsf{C}_{p} = 2p+1>0$ , we obtain
		\begin{equation*}
			h_p'(t) < 0 \quad\text{for all } t \in (0,1] \quad\text{and}\quad \lim_{t\to0^+} h_p'(t) <0.
		\end{equation*}
		As a consequence, we conclude that
		\begin{equation*}
			h_p(t) \leq h_p(0) = 0 \quad\text{for all } t \in [0,1],
		\end{equation*}
		which, in turn, implies the validity of~\eqref{eq:red-2}, thereby concluding the proof.

	
	\section*{Acknowledgments} 
	\noindent C.A.A., G.C., and M.G.\ are members of the “Gruppo Nazionale per l'Analisi Matematica, la Probabilità e le loro Applicazioni” (GNAMPA) of the “Istituto Nazionale di Alta Matematica” (INdAM, Italy). C.G.\ and M.G.\ have been partially supported by the “INdAM - GNAMPA Project”, CUP \#E5324001950001\#. C.G.\ has been also partially supported by the by the Research Project of the Italian Ministry of University and Research (MUR) PRIN 2022 “Partial differential equations and related geometric-functional inequalities”, grant number 20229M52AS\_004.
	
	C.A.A. is a postdoctoral fellow of the “Istituto Nazionale di Alta Matematica” (INdAM, Italy) at the University of Florence.
	
	The authors kindly thank Alberto Farina for the discussions they have had over the years on the topic of this manuscript and, in particular, for sharing with them an unpublished draft, coauthored with C.A.A.\ and G.C., containing a preliminary version of the main results of Subsection~\ref{subsec:ineqP}. \newline
	
	\textbf{Data availability statement.} Data sharing not applicable to this article as no datasets were
	generated or analyzed during the current study. \newline
	
	\textbf{Conflict of Interest.} The authors declare that there is no conflict of interest.
	

\end{document}